\documentclass[11pt]{svjour3}                     

 \usepackage{geometry}
\usepackage{tikz}
\usetikzlibrary{calc}
\usepackage{graphicx}%
\usepackage{multirow}%
\usepackage{amsmath,amssymb,amsfonts}%
\usepackage{mathrsfs}%
\usepackage[title]{appendix}%
\usepackage{xcolor}%
\usepackage{textcomp}%
\usepackage{manyfoot}%
\usepackage{comment}
\usepackage{booktabs}%
\usepackage{algorithm}%
\usepackage{algorithmicx}%
\usepackage{algpseudocode}%
\usepackage{listings}%
\usepackage{subcaption}
\newtheorem{assumption}{Assumption}%

\DeclareMathOperator{\conv}{conv}
\DeclareMathOperator{\epi}{epi}

\DeclareMathOperator{\cone}{cone}

\DeclareMathOperator{\cl}{cl}

\DeclareMathOperator{\R}{\mathbb{R}}
\DeclareMathOperator{\Q}{\mathbb{Q}}

\DeclareMathOperator{\for}{for}

\DeclareMathOperator{\sconv}{sconv}
\DeclareMathOperator{\Lf}{\mathcal{L}}

\newcommand{\EP}{EP}

\definecolor{links}{RGB}{204,36,29}
\usepackage[colorlinks=true,breaklinks=true,bookmarks=true,urlcolor=links,citecolor=links,linkcolor=links,bookmarksopen=false,draft=false]{hyperref}
\def\EMAIL#1{\href{mailto:#1}{#1}}
\def\URL#1{\href{#1}{#1}}         

\def\makeheadbox{}
\date{}

\begin{document}

\title{Disjunctive Submodular Functions: Envelopes and Applications to Inventory and 0-1 Quadratic Optimization}

    \titlerunning{Disjunctive Submodular Functions}


\author{ Zhongqi Wu \and  Taotao He   \and Mohit Tawarmalani
}

\authorrunning{Wu, He and Tawarmalani} 


\institute{ Zhongqi Wu \at
              Antai College of Economics and Management, Shanghai Jiao Tong University \\
              \EMAIL{wuzhongqi@sjtu.edu.cn}          
              \and
Taotao He \at
              Antai College of Economics and Management, Shanghai Jiao Tong University \\
              \EMAIL{hetaotao@sjtu.edu.cn}, \URL{https://taotaoohe.github.io/}           
           \and
           Mohit Tawarmalani \at
              Mitch Daniels School of Business, Purdue University\\
							\EMAIL{mtawarma@purdue.edu}, \URL{https://mohit.prof/}
}


\maketitle

\begin{abstract}
This paper considers convex envelopes of disjunctive submodular functions---functions that are lattice family submodular over faces of a hypercube---and constructs the first strongly polynomial algorithm for their separation when there are two facial disjunctions. Submodular functions, whose convex envelopes are characterized by the Lov\'asz extension, have occupied a fundamental role in constructing relaxations for combinatorial and nonlinear optimization problems. However, disjunctive submodular function envelopes have not been explored besides the use of ellipsoid algorithm, which remains practically intractable. Our algorithm is derived in three steps by expressing the disjunctive function as a minimum of two extended submodular functions, introducing a variable lifting technique, and constructing the sublinear envelope in the lifted space. The paper also makes several other contributions. First, we provide a disjunctive formulation for the case where each submodular function admits a linear programming formulation. Second, we derive the closed-form sublinear envelope characterization for intersecting submodular functions, yielding new structural insights into a multi-product inventory sales maximization problem. Third, we fully characterize the convex envelope of a bilinear function defined over a cycle graph in the original variable space. Finally, we show computationally that the cycle inequalities close approximately 60\% of the gap for complete and Hadamard graphs, over 30\% of the gap for complete bipartite graphs, and over 80\% of the gap for sparse graphs such as cactus and Halin graphs. The resulting relaxations are also more efficient to solve than previous extended space formulations.
			
				\keywords{  Submodular functions, Convexification, Weighted Polymatroid Intersection, Bilinear Programming, Multi-Product Newsvendor Problem }
\end{abstract}
\def \revise {\color{black}}
\def \DS {\texttt{DS }}
\def \PS {\textsf{PS }}

\section{Introduction}\label{sec1}

Strong convex relaxations are critical for the efficiency of branch-and-bound solvers in 0-1 nonlinear programming. While submodularity and disjunctive programming have independently proven to be powerful in this context, their simultaneous exploitation remains underexplored. We introduce \textit{disjunctive submodularity} as a new unifying abstraction. This framework leverages the lattice-family submodular structure of functions restricted to subdomains (faces of the 0-1 hypercube) to derive tractable, tight convex relaxations.

The convex envelope of a standard submodular function is characterized by the Lov\'asz extension~\cite{lovasz1983submodular}, with its polyhedral structure explicitly described by interpolating the function over the Kuhn triangulation of the 0-1 hypercube~\cite{tawarmalani2013explicit}. The associated separation problem reduces to linear optimization over an extended polymatroid, solvable efficiently via a greedy algorithm~\cite{edmonds1970submodular}. This tractability has established submodularity as a fundamental tool for strengthening relaxations in mixed-binary convex conic programming~\cite{atamturk2019lifted,atamturk2020submodularity,atamturk2023supermodularity,kilincc2025conic}, stochastic programming~\cite{xie2021distributionally,kilincc2022joint}, and structured nonconvex sets~\cite{yu2023generalized,xu2025submodular}. Beyond relaxation, submodular functions have found diverse applications in dynamic programming~\cite{iancu2013supermodularity}, multi-armed bandit problems~\cite{bertsimas1996conservation}, and neural network verification~\cite{tjandraatmadja2020convex}. Recent advances in submodularity recognition further broaden its potential applicability~\cite{deza2026sum}.

Parallel to these developments, disjunctive programming techniques describe the convex hull of sets representable as a disjunctive union of compact convex sets in higher-dimensional space~\cite{rockafellar1997convex,balas1979disjunctive,ceria1999convex}. This approach has yielded tight relaxations for various nonlinear functions and constraints~\cite{tawarmalani2002convex,kronqvist202550,tawarmalani2026new}. For structured sets, it has enabled the development of compact and even non-extended formulations~\cite{gunluk2010perspective,khajavirad2013convex,de2024explicit,xu2025gaining,han2025compact}.

Despite the individual strengths of these paradigms, \textit{disjunctive submodular functions}---defined as functions that are lattice-family submodular over faces of the hypercube---remain underutilized in practice. A primary obstacle is that Kuhn's triangulation no longer characterizes the convex envelope, and standard disjunctive programming formulations are exponential in size. While polynomial-time separation is theoretically possible~\cite[Corollary 2.9]{tawarmalani2013explicit}, existing methods rely on the ellipsoid algorithm, which does not scale to large instances. Specifically, this paper bridges the gap by developing a \textbf{strongly polynomial algorithm} to derive the convex envelope of $f(x_0,x)$, where $x_0\in\{0,1\}$, $x\in \{0,1\}^n$ and both $f(0,x)$ and $f(1,x)$ are submodular.

Our main contributions are summarized below:
\begin{enumerate}
    \item \textbf{Tractability Result:} We prove that the convex envelope of a disjunctive submodular function with two disjunctions admits a separation oracle solvable in strongly polynomial time (Theorem~\ref{convex-sep_of_lat}). This result rests on three key theoretical developments in Section~\ref{section:preliminaries}:
    \begin{itemize}
        \item An \textit{exact representation theorem} expressing disjunctive submodular functions as the minimum of submodular functions with equivalent separation oracle complexities (Theorem~\ref{them:extension-lattice}).
        \item A \textit{lifting result} transforming the convex envelope construction into a sublinear envelope construction problem over a truncated cone (Section~\ref{sec:from-sublinear-to-convex}).
        \item An \textit{extension result} expanding the domain of the function from the truncated cone to the full cube (Theorem~\ref{theo:extension-recovery}).
    \end{itemize}
    By combining these steps with the weighted polymatroid intersection algorithm, we establish the desired tractability.

    \item \textbf{Sublinear Envelopes:} We explore sublinear envelopes for disjunctive submodularity. Specifically, we develop the sublinear envelope for the minimum of two intersecting family submodular functions (Proposition~\ref{prop:sep_intersecting}). When one function in the minimum is linear, we describe the coefficients of the facet defining inequalities in terms of the function's Dilworth truncation. Instead of evaluating the truncation to compute each coefficient, we improve the complexity to solving $n$ submodular minimization problems (Theorem~\ref{theo:facet-1}). 

    \item \textbf{Value Function of Inventory Optimization:} We relate the value function of a multi-product newsvendor problem to the sublinear envelope of a disjunctive submodular function. This approach generalizes recent characterizations for sales maximization models~\cite{punt2025multi} by reframing the problem over intersecting families. We provide a closed-form sublinear envelope characterization in the space of the original problem variables (Corollary~\ref{coro:regular}).

    \item \textbf{0-1 Bilinear Programming:} We consider 0-1 bilinear functions whose interaction graphs (where variables $x_i$ and $x_j$ are connected if they appear in a bilinear term together) form cycles. We provide the first closed-form convex envelope characterization for this class of functions in the space of the original problem variables (Theorem~\ref{theo:cycle-envelope}). This construction is based on a general LP formulation for separating $K$ submodular functions, assuming each separation oracle is a polynomial-sized LP (Theorem~\ref{theorem:LP}). This yields several advantages:
    \begin{itemize}
        \item \textit{Compactness:} Our formulations require variables that scale linearly with the original variables. In contrast, existing literature relies on lifted-space formulations based on odd-cycle inequalities and $0\text{-}\frac{1}{2}$ Chv\'{a}tal-Gomory cuts~\cite{padberg1989boolean,bonami2018globally,gupte2020extended}, which grow quadratically for dense graphs.
        \item \textit{Effectiveness:}  For dense graphs, relative to McCormick relaxation, our inequalities close approximately 60\% of the gap for complete and Hadamard instances, and over 30\% for bipartite instances. For sparse graphs (cactus and Halin), the improvement is more pronounced, closing over 80\% of the gap when longer cycle lengths are used for separation.
        \item \textit{Efficiency:} Our method is computationally more efficient than the baseline McCormick relaxation because it avoids introducing auxiliary variables for bilinear terms, leading to faster solution times despite additional cutting-plane iterations.
    \end{itemize}
\end{enumerate}

The paper is organized as follows. Section~\ref{section:preliminaries} presents the theoretical foundations of our framework, reducing the convex envelope of a disjunctive submodular function to the convex and sublinear envelopes of the minimum of submodular functions. Section~\ref{section:env_minimum} studies the convex and sublinear envelopes of pointwise minima of $K$ submodular functions under various settings. Applications are discussed in Section~\ref{section:app}, covering multi-product inventory (Section~\ref{sec:inventory}) and bilinear functions over cycle graphs (Section~\ref{sec:bilinear}). Finally, Section~\ref{sec:computation} reports computational results highlighting the efficacy of the derived inequalities for 0-1 bilinear programming.

\textbf{Notation.} We use $\R$ to denote the set of real numbers and $\R_+$ the set of non-negative real numbers. For a set $S \subseteq \R^n$, $\conv(S)$ denotes the convex hull of $S$, and $\cone(S)$ denotes the convex conic hull of $S$. For a function $f:S\subseteq\R^n\to\R$, $\epi(f)$ denotes the epigraph of $f$. For any positive integer $n$, let $[n]:=\{1,\ldots,n\}$. We use $\mathbf{0}$ and $\mathbf{1}$ to denote the all-zero and all-one vectors, respectively. Moreover, $e_i$ denotes the $i^{\text{th}}$ standard basis vector of $\R^n$. We omit the dimension $n$ when it is apparent from the context.

\section{Preliminaries}
\label{section:preliminaries}
{\revise
This section presents the foundations of our framework. Section~\ref{sec:disjunctive-submodular} establishes an equivalence between disjunctive submodular functions and pointwise minimum of finitely many submodular functions. Section~\ref{sec:from-sublinear-to-convex} introduces sublinear and convex envelopes and develops a lifting technique for recovering the convex envelope from the sublinear envelope of a lifted function. Finally, Section~\ref{section:lifting_dsf} combines these results to derive the convex envelope of a disjunctive submodular function from the sublinear envelope of its pointwise minimum representation.
}

\subsection{Disjunctive Submodularity and Pointwise Minimum Representations}
\label{sec:disjunctive-submodular}

We will show that  a function defined on a union of lattice families, where it is submodular on each constituent family, can be represented as the pointwise minimum of globally submodular functions. A set \(X\subseteq\{0,1\}^n\) is a \textit{lattice family} if it is closed under componentwise maximum (\(x\vee y\)) and minimum (\(x\wedge y\)). A function \(g:X\to\mathbb{R}\) is \textit{submodular} on \(X\) if 
\[
g(x)+g(y) \geq g(x\vee y)+g(x\wedge y) \quad \for\: x, y\in X.
\]
A function is supermodular if its negation is submodular. The following examples illustrate how non-submodular functions may admit disjunctive formulations via lattice families, and how these relate to the pointwise minimum of submodular functions.

\begin{example}[Cycle Bilinear Function]
\label{ex:cycle-disjunction}
Consider \(f(x) = \sum_{i=1}^{n-1}a_i x_ix_{i+1} + x_1x_n\) for \(x\in\{0,1\}^n\), with \(a_i<0\). The term \(x_1x_n\) is supermodular, so \(f\) is not globally submodular. However, consider the faces
\[
X^0 := \bigl\{ x\in\{0,1\}^n \bigm| x_1=0 \bigr\} \quad \text{ and } \quad
X^1 := \bigl\{ x\in\{0,1\}^n \bigm| x_1=1 \bigr\}.
\]
Let $q(x) :=\sum_{i=1}^{n-1}a_i x_ix_{i+1}$, which is globally submodular. On \(X^0\), \(f(x)=q(x)\), and on \(X^1\), \(f(x)=q(x)+x_n\). Thus, \(f\) is submodular on each face. Since \(x_1x_n=\min\{x_1,x_n\}\) for binary variables, we have
\begin{equation*}
\label{eq:cycle-min-representation}
f(x) = q(x)+\min\{x_1,x_n\} =\min\bigl\{q(x)+x_1,\, q(x)+x_n\bigr\}.
\end{equation*}
Both terms in the minimum are globally submodular. \qed
\end{example}

\begin{example}[Signed Cut Function]
\label{ex:signed-cut-disjunction}
Let \(G=([n],E)\) have edge weights \(w_{ij}\in\mathbb{R}\), and consider \(f(x) = \sum_{(i,j)\in E}w_{ij}|x_i-x_j|\). For \(w_{ij}\geq 0\), the term $w_{ij}|x_i-x_j|$ is submodular and for \(w_{ij}<0\), it is supermodular. Let \(E^- = \bigl\{ (i,j)\in E \bigm| w_{ij}<0 \bigr\}\). For any partition \(E^-_1\cup E^-_2 = E^-\) with \(E^-_1\cap E^-_2 = \emptyset\), define
\begin{equation*}
\label{eq:signed-cut-lattice}
X(E^-_1,E^-_2) := \left\{ x\in\{0,1\}^n \;\middle|\;
\begin{aligned}
x_i&\geq x_j\quad \forall (i,j)\in E^-_1\\
x_i&\leq x_j\quad \forall (i,j)\in E^-_2
\end{aligned}
\right\}.
\end{equation*}
Each \(X(E^-_1,E^-_2)\) is a lattice family because order constraints are preserved under componentwise min/max. On this set, negative-weight terms become linear (e.g., \(|x_i-x_j|=x_i-x_j\) if \(x_i\geq x_j\)), while nonnegative terms remain submodular. Thus, \(f\) is submodular on each lattice family. As the partition ranges over all orientations of \(E^-\), these families cover \(\{0,1\}^n\). Theorem~\ref{them:extension-lattice} below shows that \(f\) can be represented as the minimum of submodular extensions, each agreeing with \(f\) on its respective lattice family. \qed
\end{example}

Consider a function $f: X \to \R$ defined on a lattice family $X \subseteq \{0,1\}^n$. Suppose that $X = \bigcup_{k=1}^K X_k$, where each $X_k \subseteq X $ is a lattice family and $f$ is submodular on every $X_k$. We refer to this class of functions as \textit{disjunctive submodular functions}. In Theorem~\ref{them:extension-lattice}, we show that any disjunctive submodular function can be represented as the pointwise minimum of submodular functions. The key idea is to construct, for each $k$, a submodular extension of the restriction $f|_{X_k}$ that agrees with $f$ on $X_k$ and dominates  $f$ elsewhere.  To this end, we introduce a retraction operator $\rho_k: X \to X_k$ satisfying:
\[
\rho_k(x)\in X_k \quad \text{ and } \quad \rho_k(x)=x \quad \for  x\in X_k. 
\]
With this, for each $k$, we define an extension function $f_k: X \to \R$ as follows
\begin{equation}\label{eq:extension-lattice} 
f_k(x) = f\bigl(\rho_k(x)\bigr) + C \bigl\|x-\rho_k(x) \bigr\|_1 \quad \for  x\in X,
\end{equation} 
where \(C \geq f^U - f^L\) with \(f^L \leq f(x) \leq f^U\) on \(X\), which can be computed efficiently by deriving lower and upper bounds for $f$ over the lattice families $X_k$, respectively~\cite{fujishige2005submodular}. The retraction operator ensures that $f_k$ coincides with $f$ on $X_k$ while the penalty term guarantees that $f_k$ dominates $f$ on $X \setminus X_k$. 

To ensure that each extension function is submodular, we construct a particular retraction operator from a given representation of the lattice family $X_k$. Recall that each lattice family admits a representation in terms of fixed coordinates and precedence constraints~\cite[Prop. 10.3.3]{groetschel1988geometric}. Specifically, for each $k$, there exist index sets $I^0_k, I^1_k$ and a reflexive and transitive relation $D_k \subseteq [n] \times [n]$ such that 
\begin{equation}\label{eq:lattice-rep}
X_k = \bigl\{ x \in \{0,1\}^n \bigm| x_i=0 \; \for i \in I_k^0,\ x_i =1 \; \for  i\in I_k^1,\ x_i \geq x_j \; \for \; (i,j) \in D_k  \bigr\},
\end{equation}
which will be referred to as a pre-order representation of a lattice family. Without loss of generality, we assume that $(i,j) \in D_k$ and $i \in I^0_k$ imply $j \in I^0_k$, and $(i,j) \in D_k$ and $j \in I^1_k$ imply $i \in I^1_k$.  Given this representation, define the greatest and least elements of $X_k$ as 
\[
\top_k := \sum_{i \notin I^0_k} e_i \quad  \text{ and } \quad \bot_k :=  \sum_{i \in I_k^1} e_i,
\]
respectively. In addition, associated with $D_k$  we introduce a closure operator $c_k$ which maps a vector  $ x \in  \{0,1\}^n$ to a vector $ y \in \{0,1\}^n$  so that:
\[
y_i =  \max_{j:(i,j) \in D_k} x_j \qquad \for i \in [n]. 
\]
The retraction operator is then defined as
\begin{equation}\label{eq:retraction}
\rho_k(x) := c_k(x \wedge \top_k) \vee \bot_k. 
\end{equation}
The construction of $\rho_k$ can be interpreted as a sequence of corrections that map an arbitrary point $x \in X$ into the lattice family. First, the operation $x \wedge \top_k$ removes all coordinates that violate the fixed-zero constraints. Next, the closure operator $c_k$ ensures that the resulting vector satisfies the precedence constraints induced by $D_k$. Last, the join with $\bot_k$ enforces the fixed-one constraints. {\revise This particular choice of retraction, together with a sufficiently large constant $C\geq f^U-f^L$,  preserves the submodularity of each extended function, leaving the minimum unchanged as we show next.}

\begin{theorem}\label{them:extension-lattice}
Let \(f\) be defined on a lattice family \(X=\bigcup_{k=1}^K X_k\) and be submodular on each \(X_k\) defined as in~\eqref{eq:lattice-rep}. Let $\rho_k$ be the retraction defined as in~\eqref{eq:retraction}, and $f_k$ be the extension function given by~\eqref{eq:extension-lattice}. Then, each $f_k$ is submodular on $X$ and 
\[ 
f(x) = \min_{k\in [K]} f_k(x)  \quad \for x\in X. 
\] 
\end{theorem}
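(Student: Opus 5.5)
The plan is to verify two things: that each $f_k$ agrees with $f$ on $X_k$ and dominates it elsewhere, which gives the minimum representation, and that each $f_k$ is submodular on $X$. Both rest on understanding how $\rho_k$ behaves coordinatewise, so I will first establish the basic properties of $\rho_k$.

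\emph{Step 1 (retraction properties).} Let $S_i:=\{j\notin I^0_k : (i,j)\in D_k\}$. For $i\in I^0_k$ the WLOG closure assumption gives $j\in I^0_k$ whenever $(i,j)\in D_k$, so $c_k(x\wedge\top_k)_i=0$. For $i\notin I^0_k\cup I^1_k$ we have $\rho_k(x)_i=\max_{j\in S_i}x_j$, and $\rho_k(x)_i=1$ for $i\in I^1_k$. Transitivity of $D_k$ gives $\rho_k(x)_i\ge\rho_k(x)_j$ for $(i,j)\in D_k$; when $j\in I^1_k$ this uses $i\in I^1_k$. Hence $\rho_k(x)\in X_k$. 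Reflexivity of $D_k$ gives $\rho_k(x)_i\ge x_i$ off $I^0_k$. If $x\in X_k$, the precedence constraints force $\max_{j\in S_i}x_j=x_i$, so $\rho_k(x)=x$. Moreover, every operation in $\rho_k$ preserves joins, so $\rho_k(x\vee y)=\rho_k(x)\vee\rho_k(y)$. Meets, however, are only preserved as $\rho_k(x\wedge y)\le\rho_k(x)\wedge\rho_k(y)$.

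\emph{Step 2 (minimum representation).} For $x\in X_k$ we have $f_k(x)=f(x)$. For $x\notin X_k$ we have $x\neq\rho_k(x)$, so the penalty is at least $C$. Since $\rho_k(x)\in X_k\subseteq X$, this gives $f_k(x)\ge f^L+C\ge f^U\ge f(x)$. Because the $X_k$ cover $X$, every $x$ lies in some $X_k$, and therefore $f=\min_k f_k$ on $X$.

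\emph{Step 3 (submodularity).} I expect this to be the main step, because $\rho_k$ fails to preserve meets. Write $d(x):=\|x-\rho_k(x)\|_1$. By Step 1 this decomposes coordinatewise as follows:
\begin{itemize}
\item for $i\in I^0_k$ the term is $x_i$;
\item for $i\in I^1_k$ it is $1-x_i$;
\item otherwise it is $\max_{j\in S_i}x_j-x_i$.
\end{itemize}
The linear parts are modular. For a free coordinate $i$, set $a=\rho_k(x)_i$ and $b=\rho_k(y)_i$. Then $\rho_k(x\vee y)_i=\max(a,b)$, and for binary values $a+b-\max(a,b)=\min(a,b)$. This yields
\[
d(x)+d(y)-d(x\vee y)-d(x\wedge y)=\bigl\|\bigl(\rho_k(x)\wedge\rho_k(y)\bigr)-\rho_k(x\wedge y)\bigr\|_1=:\delta\ge 0 .
\]
Using submodularity of $f$ on $X_k$ together with join-preservation, I obtain
\[
f(\rho_k x)+f(\rho_k y)\ge f\bigl(\rho_k(x\vee y)\bigr)+f(\rho_k x\wedge\rho_k y).
\]
It then remains to show
\[
f(\rho_k x\wedge\rho_k y)+C\delta\ge f\bigl(\rho_k(x\wedge y)\bigr).
\]
If $\delta=0$ the two points coincide and the inequality is trivial. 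If $\delta\ge1$, it follows from $C\ge f^U-f^L$, since both points lie in $X_k\subseteq X$.
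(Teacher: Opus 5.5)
Your proof is correct and follows essentially the same route as the paper. It uses the same retraction properties (join-preserving, only sub-preserving for meets, identity on $X_k$), applies submodularity of $f$ at $\rho_k(x),\rho_k(y)$, obtains the same penalty defect $\|\rho_k(x)\wedge\rho_k(y)-\rho_k(x\wedge y)\|_1$, and absorbs it with $C\ge f^U-f^L$.

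The only difference is organizational. You handle the $I^0_k$ and $I^1_k$ coordinates in one pass by observing that their penalty contributions are modular. The paper instead first proves submodularity on $X_k^\downarrow=\{x\in X\mid x\le\top_k\}$, where $\rho_k(x)\ge x$, and then extends to all of $X$ via $f_k(z)=f_k(z\wedge\top_k)+C\|z-z\wedge\top_k\|_1$.
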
 
\begin{proof} 
See Appendix~\ref{proof:theo-extension}. \qed
\end{proof} 

A key special case occurs when each \(X_k\) is a face of the hypercube. More precisely, for every \(k\in[K]\), there exist disjoint index sets \(I_k^0,I_k^1\subseteq[n]\) such that 
\begin{equation}\label{eq:lattice-face}
    X_k = \left\{ x\in\{0,1\}^n \;\middle|\; x_i=0 \text{ for all } i\in I_k^0,\ x_i=1 \text{ for all } i\in I_k^1 \right\}. 
\end{equation}
Equivalently, \(X_k\) admits the representation in~\eqref{eq:lattice-rep} with index sets $I_k^0,I_k^1$ and relation set $ D_k=\{(i,i):i\in[n]\}$. For this choice of \(D_k\), the associated closure operator satisfies $c_k(x) = x$ for $x\in \{0, 1\}^n$. Therefore, the retraction operator simplifies to \(\rho_k(x)=(x\wedge \top_k)\vee \bot_k\), and~\eqref{eq:extension-lattice} becomes:
\begin{equation}\label{eq:face-proj} 
\begin{aligned}
    f_k(x)&= f\bigl((x\wedge \top_k)\vee \bot_k\bigr) + C\|(x\wedge \top_k)\vee \bot_k - x \|_1\\
    &=f\bigl((x\wedge \top_k)\vee \bot_k\bigr) + C\Bigl( \sum_{i\in I^0_{k}}x_i + \sum_{i\in I^1_{k}}(1-x_i) \Bigr).
\end{aligned}
\end{equation} 
The second equality follows because the retraction fixes the coordinates in $I_k^0$ and $I_k^1$ to 0 and 1, respectively, while leaving all other coordinates unchanged.

\begin{corollary}\label{cor:extension-face}
Let \(f\) be defined on a lattice family \(X=\bigcup_{k=1}^K X_k\) and be submodular on each \(X_k\) defined as in~\eqref{eq:lattice-face}. Let $f_k$ be the extension function given by~\eqref{eq:face-proj}. Then, each $f_k$ is submodular on \(X\) and 
\[
f(x)=\min_{k\in [K]} f_k(x)\quad \for x\in X.
\]
\end{corollary}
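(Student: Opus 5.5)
The plan is to obtain Corollary~\ref{cor:extension-face} as a direct specialization of Theorem~\ref{them:extension-lattice}. A face as in~\eqref{eq:lattice-face} is a lattice family: fixing coordinates is preserved under $\vee$ and $\wedge$. It has the pre-order representation~\eqref{eq:lattice-rep} with the given $I_k^0,I_k^1$ and $D_k=\{(i,i):i\in[n]\}$. This relation is reflexive and transitive. The two normalization conditions ($(i,j)\in D_k$ with $i\in I^0_k$ implies $j\in I^0_k$, and the analogue for $I^1_k$) hold trivially, since only pairs with $i=j$ occur. Hence all hypotheses of Theorem~\ref{them:extension-lattice} are met, and it yields that the functions $f_k$ defined by~\eqref{eq:extension-lattice} with retraction~\eqref{eq:retraction} are submodular on $X$ and satisfy $f=\min_k f_k$ on $X$.

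It remains to check that these $f_k$ coincide with the functions in~\eqref{eq:face-proj}. The first step is that the closure operator is the identity: for $D_k$ the diagonal, $y_i=\max_{j:(i,j)\in D_k}x_j=x_i$. So $\rho_k(x)=(x\wedge\top_k)\vee\bot_k$, which is exactly the first line of~\eqref{eq:face-proj}. The second step is the penalty term. I compute $\rho_k(x)$ coordinatewise: it equals $0$ on $I_k^0$ (since $\top_k$ vanishes there and $I_k^0\cap I_k^1=\emptyset$), equals $1$ on $I_k^1$, and equals $x_i$ elsewhere. Therefore
\[
\|x-\rho_k(x)\|_1=\sum_{i\in I_k^0}x_i+\sum_{i\in I_k^1}(1-x_i),
\]
giving the second line of~\eqref{eq:face-proj}.

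There is no genuine obstacle here. The only point requiring care is confirming that the disjointness of $I^0_k$ and $I^1_k$, together with the trivial relation $D_k$, meets the WLOG normalization assumed in the theorem, and this is immediate. If one preferred a self-contained argument, one could instead verify submodularity of~\eqref{eq:face-proj} directly. The map $x\mapsto(x\wedge\top_k)\vee\bot_k$ is a lattice homomorphism into $X_k$, so $f\circ\rho_k$ is submodular, and the penalty is linear in $x$. The identity $f=\min_k f_k$ would then follow from $f_k=f$ on $X_k$ and $f_k\ge f^L+C\ge f^U\ge f$ off $X_k$, because the penalty is at least $1$ there. This route is unnecessary given Theorem~\ref{them:extension-lattice}.
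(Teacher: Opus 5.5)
Your proposal is correct and matches the paper's proof. The paper likewise invokes Theorem~\ref{them:extension-lattice} with $D_k=\{(i,i):i\in[n]\}$, observes that $c_k$ is then the identity, and notes that the extension reduces to~\eqref{eq:face-proj}; your coordinatewise computation of $\rho_k$, using $I_k^0\cap I_k^1=\emptyset$, supplies the penalty identity that the paper states just before the corollary, and your optional self-contained route is also sound, since in the face case $\rho_k$ is a genuine lattice homomorphism and submodularity of $f_k$ then holds for any $C$.
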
 
\begin{proof} 
The result follows from Theorem~\ref{them:extension-lattice}, since for each face \(X_k\),
the closure operator \(c_k\) is the identity function and the corresponding extension reduces to~\eqref{eq:face-proj}.\qed
\end{proof}

Theorem~\ref{them:extension-lattice} shows that every disjunctive submodular function can be represented as the pointwise minimum of finitely many submodular functions. The converse also holds, that is, any function that can be expressed as the pointwise minimum of finitely many submodular functions is a disjunctive submodular function.

\begin{remark}

For $k \in [K]$, let $f_k:X\rightarrow\mathbb{R}$  be a submodular function on a lattice family $X\subseteq\{0,1\}^n$, and define $f(x) = \min_{k\in [K]}f_k(x)$. We introduce binary variables $\delta \in \{0,1\}^K$ to indicate $f_k$. More specifically, let 
\[
\Delta:=  \bigl\{ \delta \in \{0,1\}^K \bigm| \delta_1 = 1,\ \delta_1 \geq \delta_2 \geq \cdots \geq \delta_K  \bigr\}. 
\]
The set $\Delta$ is a lattice family whose elements are precisely $v^k:=\sum_{i=1}^k e_i$, $k \in [K]$. Now, consider a function  $g:X\times \Delta \to  \mathbb{R}$ defined as 
\[
g(x,\delta):=
\delta_1f_1(x)+
\sum_{k=2}^K \delta_k\bigl(f_k(x)-f_{k-1}(x)\bigr). 
\]
Clearly, $X \times \Delta$ is a lattice family since the Cartesian product of lattice families is again a lattice family. For each $k \in [K]$,  $g(x,v^k)$ telescopes to $f_k(x)$. Hence, $g$ is submodular on the lattice family $X\times\{v^k\}$ for every $k\in[K]$. Since $X\times\Delta
=
\bigcup_{k=1}^K
\left(X\times\{v^k\}\right)$,  it follows that $g$ is a disjunctive submodular function on $X\times\Delta$. Moreover, minimizing $g$ over the auxiliary indicator variables recovers the original pointwise-minimum function. Equivalently, the epigraph of $f$ is the corresponding projection of the epigraph of $g$. \qed

\end{remark}

\subsection{From sublinear envelopes to convex envelopes}\label{sec:from-sublinear-to-convex}
In this subsection, we present the notion of sublinear and convex envelopes and construct convex envelopes through sublinear envelopes. 
A function $p: X \to \R$, where $X$ is a conic convex subset of $\R^n$, is called sublinear if it is positively homogeneous, \textit{i.e.}, $p(r x) = rp(x)$, for all $x \in X$ and $r \geq 0$, and subadditive, \textit{i.e.}, $p(x+y) \leq p(x) + p(y)$ for $x,y \in X$. Clearly, a sublinear function is a convex function, but the converse does not hold. 
\begin{definition}
    For a function $f: S \subseteq \R^n \to \R$, the function $p: \cone(S) \to \R$ is the sublinear envelope of $f(\cdot)$ over $\cone(S)$ if 
    \begin{enumerate}
        \item $p(\cdot)$ is a sublinear function over $\cone(S)$,
        \item $p(x) \leq f(x)$ for all $x \in S$,
        \item  if $g(\cdot)$ is any sublinear function satisfying $g(x) \leq f(x)$ for all $x \in S$, $g(x)\leq p(x)$ for all $x \in \cone(S)$.
    \end{enumerate}
    We denote the sublinear envelope of $f(\cdot)$  over $\cone(S)$ by $\sconv_{\cone(S)}(f)$. When the set $S$ is clear from the context, we omit the subscript and simply write $\sconv(f)$. 
\end{definition}
\noindent In other words, $\sconv(f)$ is the largest sublinear underestimator of $f$ over $\cone(S)$.  This concept is closely related to the convex envelope, denoted as $\conv(f)$, of $f$ over $\conv(S)$, which is the largest convex underestimator of $f$ over $\conv(S)$. However, $\sconv(f)(x) \leq \conv(f)(x)$ for $x \in \conv(S)$ since $\sconv(f)(\cdot)$ must be sublinear and, therefore, convex.
Even if $f$ is positively homogeneous, $\conv(f)(x) \ne \sconv(f)(x)$; for example, consider $\sqrt{x_1x_2}$ over $[0,1]^2$. To guarantee the properness of $\sconv(f)$, namely, to exclude the value $-\infty$, we impose the following assumption.
\begin{assumption}\label{ass:proper}
There exists a linear function $l:\cone(S)\to \R$ such that $l(x)\leq f(x)$, for all $x\in S$.
\end{assumption}

We begin by presenting two  representations of the sublinear envelope that will be used throughout the paper. The dual representation expresses the sublinear envelope through conic combinations of points in the original domain and is analogous to the classical convex combination representation of the convex envelope~\cite[Proposition 2.31]{rockafellar1998variational}:
\begin{equation}\label{eq:envdual}
\conv(f)(x) = \inf_{\lambda} \biggl\{\sum_{v \in V}\lambda_vf(v) \biggm| V \subseteq S,\  x = \sum_{v \in V}v \lambda_v,\ \sum_{v \in V}\lambda_v =1,\  \lambda_v \geq 0 \for v \in V \biggr\}.     
\end{equation}
The primal representation characterizes its closure as the pointwise supremum of all linear underestimators of the original function.

\begin{lemma}\label{lemma:dualsconv}
    Let $f: S \subseteq \R^n \to \R$ be a function satisfying Assumption~\ref{ass:proper}. Then,  for $x \in \cone(S)$, we have 
    \[
    \begin{aligned}
        \sconv(f)(x) &= \inf_{\lambda} \Bigl\{\sum_{v \in V}\lambda_vf(v) \Bigm| x = \sum_{v \in V}v \lambda_v,\ V \subseteq S,\  \lambda_v \geq 0 \for v \in V \Bigr\} \\
    \end{aligned}
    \]
    Moreover, for $x \in \cone(S)$
    \begin{equation*}
        \cl\sconv(f)(x) = \sup \bigl\{ \langle \alpha,x \rangle  \bigm| \langle \alpha, v \rangle \leq f(v) \, \for v \in S \bigr\}.
\end{equation*}
\end{lemma}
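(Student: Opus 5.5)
The plan is to prove the two representations separately. Both rest on the observation that the epigraph of a sublinear function is a convex cone, so $\sconv(f)$ is governed by the conic hull of the epigraph of $f$ restricted to $S$.

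For the first (inf) formula, define $h(x)$ as the infimum on the right-hand side, for $x\in\cone(S)$. The first step is to show that $h$ is a valid candidate:
\begin{itemize}
\item $h$ is finite. By Assumption~\ref{ass:proper}, any feasible $(V,\lambda)$ satisfies $\sum_v\lambda_v f(v)\ge \sum_v \lambda_v l(v)=l(x)$, so $h(x)\ge l(x)>-\infty$. Feasibility holds because $x\in\cone(S)$.
\item $h$ is positively homogeneous. Scaling $\lambda$ by $r>0$ maps feasible representations of $x$ bijectively onto those of $rx$. The case $r=0$ follows from $0\le h(0)\le 0$: the empty representation gives $h(0)\le 0$, and $l(0)=0$ gives $h(0)\ge 0$.
\item $h$ is subadditive. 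Concatenating representations of $x$ and $y$ gives a representation of $x+y$, so $h(x+y)\le h(x)+h(y)$ after taking infima.
\item $h\le f$ on $S$. Take $V=\{x\}$ with $\lambda_x=1$.
\end{itemize}
Hence $h$ is a sublinear underestimator of $f$ on $S$, and $h\le\sconv(f)$.

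Conversely, let $g$ be any sublinear function with $g\le f$ on $S$. For any representation $x=\sum_v\lambda_v v$, subadditivity and positive homogeneity give
\[
g(x)\le\sum_v\lambda_v g(v)\le\sum_v\lambda_v f(v).
\]
Taking the infimum yields $g\le h$. Therefore $h$ is the largest sublinear underestimator, and $h=\sconv(f)$.

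For the second (closure) formula, denote the right-hand side by $\sigma(x)$. Each linear function $\langle\alpha,\cdot\rangle$ with $\langle\alpha,v\rangle\le f(v)$ on $S$ is sublinear and below $f$, hence below $\sconv(f)$. Since it is also continuous, it lies below $\cl\sconv(f)$. This gives $\sigma\le\cl\sconv(f)$.

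The reverse inequality is the main step. The plan is to use the fact that a closed proper sublinear function is the support function of a closed convex set, i.e.\ the supremum of the linear functions it majorizes (Rockafellar, Thm.~13.2). Any linear minorant $\langle\alpha,\cdot\rangle$ of $\cl\sconv(f)$ satisfies $\langle\alpha,v\rangle\le \sconv(f)(v)\le f(v)$ for $v\in S$, so it is feasible for $\sigma$.

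Properness needs care, and I would handle it via the linear function $l$. The closure is bounded below by $l$, since $l$ is continuous and $l\le\sconv(f)$. So $\cl\sconv(f)$ never takes the value $-\infty$ and has at least one linear minorant. The subtlety is that $\cl\sconv(f)$ is defined only on $\cone(S)$, which may not be closed. I would therefore extend it by $+\infty$ outside $\cone(S)$, or work on $\cl\cone(S)$, and take the closure in the sense of lower semicontinuous hull. Restricting back to $x\in\cone(S)$ then gives equality.

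An alternative route avoids the support-function theorem by using a separating hyperplane. Take a point $(x,t)$ with $t<\cl\sconv(f)(x)$, so it lies outside the closed convex cone $\cl\epi\sconv(f)$, and separate it from that cone. The cone structure forces the separating hyperplane through the origin. Its normal cannot be vertical, because of the minorant $l$ and a standard perturbation argument. The resulting non-vertical hyperplane yields an $\alpha$ with $\langle\alpha,x\rangle>t$, so $\sigma(x)\ge\cl\sconv(f)(x)$. The vertical-hyperplane case is where I expect the main obstacle, and the perturbation argument there is the step that needs the most care.
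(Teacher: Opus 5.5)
Your proposal is correct. The infimum formula follows the paper's argument, with two small differences. You bound an arbitrary sublinear minorant $g$ rather than $\sconv(f)$ itself, which also proves the envelope exists. You also check $h\ge l$ everywhere, whereas the paper only checks $h(\mathbf{0})=0$.

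For the closure formula your primary route is different. You cite Rockafellar's Theorem~13.2. Once $\cl\sconv(f)$, extended by $+\infty$ off $\cone(S)$, is seen to be closed, proper (via $l$) and positively homogeneous, that theorem makes it the supremum of its linear minorants, and each of these is feasible. The paper instead proves this special case directly. Assuming $h_2(\bar x)<\gamma<\cl\sconv(f)(\bar x)$, it separates $(\bar x,\gamma)$ from the closed convex cone $\epi(\cl\sconv(f))$ by a hyperplane through the origin with normal $(a,b)$. It then shows $b>0$ and takes $\bar\alpha=-a/b$.

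That is exactly your alternative route, and the vertical case you flag as the main obstacle needs no perturbation argument. Since $\bar x\in\cone(S)$, we have $\cl\sconv(f)(\bar x)\le\sconv(f)(\bar x)<\infty$, so $(\bar x,\cl\sconv(f)(\bar x))$ lies in the epigraph. If $b=0$, evaluating $\langle a,x\rangle+bs\ge 0$ at that point gives $\langle a,\bar x\rangle\ge 0$. This contradicts $\langle a,\bar x\rangle+b\gamma<0$. The minorant $l$ enters only to make the feasible set of $\alpha$'s nonempty, so that the closure never takes the value $-\infty$.

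Citing Theorem~13.2 is shorter, but it relies on conjugacy theory and the domain and properness bookkeeping you describe. The paper's argument is self-contained and uses only the separation theorem for a point and a closed convex cone.
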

\noindent We will primarily consider the case in which $S\subseteq\{0,1\}^n$. In this setting, Assumption~\ref{ass:proper} is satisfied whenever $f(\mathbf{0})\geq 0$. Moreover, both optimization problems in Lemma~\ref{lemma:dualsconv} can be formulated as linear programs and are feasible. Therefore, by strong duality of linear programming, the infimum in the first formulation coincides with the supremum in the second. Hence, for every $x\in\cone(S)$, $\sconv(f)(x)=\cl\sconv(f)(x)$. In the following remark, we discuss that a pre-order representation of a lattice family can be simplified in the context of envelope construction, which will be used throughout the paper.
\begin{remark}\label{rmk:lattice-family}
Consider a function $f: S \subseteq \{0,1\}^n \to \R$, where $S$ is a lattice family admitting  a pre-order representation given by index sets $I^0,I^1 \subseteq [n]$ and a reflexive and transitive relation $D\subseteq[n]\times[n]$. We simplify the pre-order representation for both convex and sublinear envelopes.

\medskip
\noindent
\textbf{Convex envelope.}
For the purpose of computing the convex envelope of $f$, we may, without loss of generality, assume that $I^0=I^1=\emptyset$ and that $D$ is a partial order. Indeed, coordinates fixed to either $0$ or $1$ can be eliminated by restricting $f$ to the corresponding face of the hypercube. If $i\in I^0 \cup I^1 $, we can restrict $f$ to the face $x_i =0$ or $x_i=1$ and delete this coordinate. It follows from the dual representation in~\eqref{eq:envdual} that the convex envelope of the original function can be recovered from the convex envelope of the resulting lower-dimensional function. 
 We may also assume that $D$ is antisymmetric. Suppose, otherwise, that there exist distinct $i,j\in[n]$ such that $(i,j)\in D$ and $(j,i)\in D$. The corresponding precedence constraints imply $x_i = x_j$ for every $x \in S$. Thus, the two coordinates can be identified and replaced by a single coordinate. After relabeling, suppose that $i=1$ and $j=2$. We then define  the reduced function
\[
\tilde{f}(x_2,x_3,\ldots,x_n)
:=
f(x_2,x_2,x_3,\ldots,x_n),
\]
on the reduced lattice family $\tilde{S}:=\bigl\{(x_2, x_3, \ldots, x_n ) \in \R^{n-1} \bigm| \exists x_1 \text{ s.t. } (x_1, x_2, \ldots, x_n) \in S  \bigr\}$. 
It follows from the dual representation in~\eqref{eq:envdual} that for $x \in \conv(S)$ we have $\conv(f)(x) = \conv(\tilde{f})(x_2, \ldots, x_n)$. Repeating these reductions eventually yields an equivalent representation in which $I^0=I^1=\emptyset$ and $D$ is a partial order.  Moreover,  these reductions preserve the submodularity of $f$. 

\medskip
\noindent
\textbf{Sublinear envelope.}
For the purpose of computing the sublinear envelope, we can no longer fix coordinates to $1$. We may continue to assume, without loss of generality, that $I^0=\emptyset$ and $D$ is a partial order. However, a coordinate $i\in I^1$ cannot, in general, be deleted. Specifically, a linear underestimator for the function in the higher-dimensional space becomes an affine underestimator once a coordinate is fixed to a non-zero value and is, therefore, not guaranteed to be dominated by the sublinear envelope throughout the domain of the restricted function. Thus, for the sublinear envelope, we retain all coordinates in $I^1$. \qed
\end{remark}

We now establish the connection between sublinear and convex envelopes through a lifting construction. Consider a function $f:X\subseteq\{0,1\}^n\to\mathbb{R}$. We define the lifted domain $\Lf\subseteq\{0,1\}^{n+1}$ as  $\Lf:=\{(0,\mathbf{0})\}\cup \bigl\{(1,x)\bigm| x\in X \bigr\}$, and the corresponding lifted function $F:\Lf\to\mathbb{R}$ by 
\begin{equation} \label{eq:pers-lift} 
F(0,\mathbf{0})=0 \qquad \text{ and } \quad F(1,x)=f(x)\quad \text{for }x\in X. 
\end{equation} 
Thus, the original function $f$ is embedded in the face $t=1$ of the lifted space, while the origin $(0,\mathbf{0})$ is assigned value zero. The following result shows that the convex envelope of $f$ can be recovered by restricting the sublinear envelope of its lift to this face.
\begin{proposition} \label{prop:pers-lift} 
Consider a function $f:X\subseteq\{0,1\}^n\to\mathbb{R}$, and let $F:\Lf\to\mathbb{R}$ be its lift defined by~\eqref{eq:pers-lift}. Then, $\conv(f)(x)=\sconv(F)(1,x)$ for all $x\in \conv(X)$.
\end{proposition}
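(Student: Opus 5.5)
The plan is to compare the two dual (inner) representations: \eqref{eq:envdual} for $\conv(f)$ and Lemma~\ref{lemma:dualsconv} for $\sconv(F)$. Both are finite linear programs over the vertex sets $X$ and $\Lf$, so infima are attained whenever they are finite. First I will check that $F$ satisfies Assumption~\ref{ass:proper}. Take $l(t,x)=t\cdot\min_{v\in X}f(v)$. Then $l(0,\mathbf{0})=0=F(0,\mathbf{0})$ and $l(1,v)\le f(v)$. Hence Lemma~\ref{lemma:dualsconv} applies to $F$ on $\cone(\Lf)$. Also, for $x\in\conv(X)$ we have $(1,x)\in\cone(\Lf)$, so both sides of the claimed identity are defined.

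For the inequality $\sconv(F)(1,x)\le\conv(f)(x)$, I will take any convex combination $x=\sum_{v\in V}\lambda_v v$ with $\sum_v\lambda_v=1$ and $V\subseteq X$. Then $(1,x)=\sum_{v}\lambda_v(1,v)$ is a conic combination of points of $\Lf$, and its cost is $\sum_v\lambda_v F(1,v)=\sum_v\lambda_v f(v)$. Taking the infimum over such combinations in the formula of Lemma~\ref{lemma:dualsconv} gives the inequality.

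For the reverse inequality, I will take any conic representation $(1,x)=\mu_0(0,\mathbf{0})+\sum_{v\in V}\mu_v(1,v)$ with $V\subseteq X$ and all $\mu\ge 0$. Reading off the first coordinate gives $\sum_{v\in V}\mu_v=1$. The remaining coordinates give $x=\sum_{v}\mu_v v$, so $(\mu_v)_{v\in V}$ is a convex combination representing $x$. The cost of the conic representation is $\mu_0F(0,\mathbf{0})+\sum_v\mu_v f(v)=\sum_v\mu_v f(v)$, because $F(0,\mathbf{0})=0$. This cost is at least $\conv(f)(x)$ by \eqref{eq:envdual}. Taking the infimum over all conic representations yields $\sconv(F)(1,x)\ge\conv(f)(x)$.

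The only delicate step is this reverse direction. Two facts make it work: the origin contributes nothing to the first coordinate, which is what forces the weights on the $t=1$ face to sum to one, and it also contributes zero cost. A further point to confirm is that Lemma~\ref{lemma:dualsconv} is stated for arbitrary $x\in\cone(S)$, so no closure issues arise. I would also remark that the origin term is genuinely needed to make $\Lf$ generate a cone whose slice at $t=1$ is exactly $\conv(X)$. The argument uses no property of $X$ beyond finiteness, so submodularity plays no role here.
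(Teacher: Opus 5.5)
Your proof is correct and is essentially the paper's argument. The paper writes the same comparison of \eqref{eq:envdual} with the conic representation in Lemma~\ref{lemma:dualsconv} as one chain of equalities, and your explicit check of Assumption~\ref{ass:proper} for $F$ via $l(t,x)=t\min_{v\in X}f(v)$ fills in a step the paper leaves implicit. Your closing remark about the origin is wrong, though: the cone generated by $\{(1,v)\mid v\in X\}$ already has $t=1$ slice $\conv(X)$, and since $F(0,\mathbf{0})=0$, adding the origin changes neither the cone nor $\sconv(F)$. Its role in the paper is structural rather than needed here: it gives $\Lf$ a pure partial-order representation with $\mathbf{0}\in\Lf$, which the extension step behind Theorem~\ref{theo:extension-recovery} uses.
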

\begin{proof}
For any $x\in\conv(X)$, we have
\begin{equation*}
    \begin{aligned}
    \conv(f)(x) &= \min_{\lambda}
\biggl\{
\sum_{v\in X}\lambda_v f(v)
\biggm| 
\sum_{v\in X}\lambda_v v = x,\;
\sum_{v\in X}\lambda_v=1,\;
\lambda_v\geq 0 \, \for v\in X
\biggr\}\\
&=\min_{\rho, \lambda}
\biggl\{\rho F(0, \mathbf{0})+
\sum_{v\in X}\lambda_v F(1, v)
\biggm| 
(1,x)=\rho (0, \mathbf{0}) + \sum_{v\in X}\lambda_v (1, v)
, \rho \geq 0, \lambda_v\geq 0\:\for v\in X
\biggr\}\\
&=\sconv(F)(1, x),
    \end{aligned}
\end{equation*}
where the first equality follows from~\eqref{eq:envdual} and $X$ contains a finite number of points, the second equality holds from $F(1,x)=f(x)$ for all $x\in X$, and  the third equality follows from Lemma~\ref{lemma:dualsconv} and the definition of $F$. \qed
\end{proof}

\subsection{Envelopes of Disjunctive Submodular Functions}\label{section:lifting_dsf}
{\revise The focus of this subsection is to derive the convex envelope of a disjunctive submodular function from the sublinear envelope of its pointwise minimum representation. Using extension and lifting, we transform a disjunctive submodular function $f$ to the pointwise minimum of submodular functions $\bar{F}_k$, defined as in~\eqref{eq:extension-sub}. The key step is to establish that the sublinear envelope of $\min_{k}\{\bar{F}_k\}$ indeed produces the convex envelope of $f$ (see Theorem~\ref{theo:extension-recovery}). Although the domain of $f$ may be a lattice family, the domain of $\bar{F}_k$ is chosen to be $\{0,1\}^{n+1}$. This choice enables us to invoke efficient combinatorial algorithms for sublinear envelopes, as discussed in Section~\ref{sec:subm}.
 }

{\revise
Consider a disjunctive submodular function  $f:X \subseteq \{0,1\}^n\to \R$ defined on  a lattice family $X$. To construct its convex envelope, Remark~\ref{rmk:lattice-family} allows us to assume, without loss of generality, that $X$ admits a partial order presentation, that is, there exists a partial order $D$ such that  
\[
X=\bigl\{x \in \{0,1\}^n \bigm| x_i\geq x_j \, \for\ (i,j) \in D \bigr\}. 
\]
This partial order structure is preserved under lifting. In particular,  let $F:\Lf\to\R$ denote the lifted function of $f$ defined as in~\eqref{eq:pers-lift}. Its domain admits a partial order representation given as follows:
\[
\Lf = \bigl\{(t, x) \in \{0,1\}^{n+1} \bigm| t \geq x_i \, \for\ i \in [n] \text{ and } x_i \geq x_j \, \for\ (i,j) \in D \bigr\}.
\]
Note that $\Lf$ is still a strict subset of $\{0,1\}^{n+1}$.

By the disjunctive submodularity of $f$, Theorem~\ref{them:extension-lattice} allows us to represent $f$ as the minimum of $K$ submodular functions $f_k: X \to \R$, $k \in [K]$. Consequently, the lifted function admits an analogous representation, $F(t,x) = \min_{k \in [K]}F_k(t,x)$, where $F_k:\Lf\to\R$ is the lift of $f_k$ given as follows:
\begin{equation*}
F_k(0,\mathbf{0})=0 \qquad \text{ and  } \quad F_k(1,x)=f_k(x)\quad \text{for }x\in X. 
\end{equation*}
Each $F_k$ is submodular on the lattice family $\Lf$. Indeed, for two points on the face $t=1$, the submodularity of $F_k$ reduces to the corresponding property of $f_k$ on $X$, while for any pair involving $(0,\mathbf{0})$, the inequality holds with equality.

It remains to extend the domain $\mathcal{L}$ of each $F_k$ to the full box $\{0,1\}^{n+1}$. We use the retraction operator introduced in Theorem~\ref{them:extension-lattice}. Specifically, let $\rho:\{0,1\}^{n+1}\to\Lf$ be the retraction operator associated with $\Lf$, defined as in~\eqref{eq:retraction}. For each $k \in [K]$, let $\bar{F}_k:\{0,1\}^{n+1}\to\mathbb{R}$, defined as follows:
\begin{equation}
\label{eq:extension-sub}
\bar{F}_k(z) := F_k\bigl(\rho(z)\bigr) + C\|z-\rho(z)\|_1,
\end{equation}
where $C=(n+1)^2(F^U-F^L)$ with $F^L\leq F_k(t, x)\leq F^U$ for $(t, x)\in\Lf$ and $k\in [K]$. Observe that the constant here is different than the one required for Theorem~\ref{them:extension-lattice}.  The choice of $C$ serves two purposes here. First, Theorem~\ref{them:extension-lattice} guarantees that the extension is submodular when $C$ is sufficiently large, in particular when $C\geq F^U-F^L$. Second, the factor $(n+1)^2$ provides the bound needed to ensure that the sublinear envelope over $\mathcal{L}$ is unaffected by the extension to the box $\{0,1\}^{n+1}$. 

The following theorem establishes that the sublinear envelope of the pointwise minimum of the extended functions recovers the convex envelope of $f$.
The detailed proof is provided in Appendix~\ref{proof:theo-extension-recover}.

\begin{theorem}
\label{theo:extension-recovery}
Let $f:X \to \R$  be a disjunctive submodular function defined on a lattice family $X$, with representation $f(x)=\min_{k\in[K]}f_k(x)$, where each $f_k :X \to\mathbb{R}$ is submodular. Define
\[
\bar{F}(t,x):=\min_{k\in[K]}\bar{F}_k(t,x) \qquad \for \, (t,x) \in \{0,1\}^{n+1}
\]
where each $\bar{F}_k$ is given by~\eqref{eq:extension-sub}. Then, each $\bar{F}_k$ is submodular, and
\[
\conv(f)(x)=\sconv(\bar{F})(1,x)
\qquad \for \, x\in\conv(X).
\]
\end{theorem}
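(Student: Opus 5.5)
The first claim, that each $\bar{F}_k$ is submodular, should follow directly from Theorem~\ref{them:extension-lattice}. As noted before the statement, each lift $F_k$ is submodular on the lattice family $\Lf$. The map $\rho$ is exactly the retraction~\eqref{eq:retraction} built from the partial-order representation of $\Lf$ inside $\{0,1\}^{n+1}$. The constant satisfies $C=(n+1)^2(F^U-F^L)\geq F^U-F^L$. Applying Theorem~\ref{them:extension-lattice} with the single family $\Lf$ viewed inside the lattice $\{0,1\}^{n+1}$ gives submodularity of $\bar{F}_k$ on the box. For the envelope identity, set $F=\min_k F_k$ on $\Lf$. This $F$ is the lift of $f=\min_k f_k$ in the sense of~\eqref{eq:pers-lift}, so Proposition~\ref{prop:pers-lift} gives $\conv(f)(x)=\sconv(F)(1,x)$. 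It therefore suffices to show $\sconv(\bar{F})(1,x)=\sconv(F)(1,x)$ for $x\in\conv(X)$.

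\textbf{The inequality $\leq$.} Since $\rho(z)=z$ for $z\in\Lf$, we have $\bar{F}_k=F_k$ on $\Lf$, and hence $\bar{F}=F$ there. Any conic decomposition $(1,x)=\sum_v\lambda_v v$ with $v\in\Lf$ is therefore feasible in the dual representation of Lemma~\ref{lemma:dualsconv} for $\bar{F}$ over $\{0,1\}^{n+1}$, with the same objective value. This gives $\sconv(\bar{F})(1,x)\leq\sconv(F)(1,x)$.

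\textbf{The inequality $\geq$.} Here I would use the primal representation in Lemma~\ref{lemma:dualsconv}, together with the fact noted after it that the envelope equals its closure for $0$-$1$ domains. This gives a linear $\alpha$ with $\alpha\cdot v\leq F(v)$ for all $v\in\Lf$ and $\alpha\cdot(1,x)=\sconv(F)(1,x)$. It is enough to show that $\alpha$ also underestimates $\bar{F}$ on the whole box. For $z\in\{0,1\}^{n+1}$ and any $k$,
\[
\alpha\cdot z\leq \alpha\cdot\rho(z)+\|\alpha\|_\infty\|z-\rho(z)\|_1\leq F(\rho(z))+\|\alpha\|_\infty\|z-\rho(z)\|_1\leq F_k(\rho(z))+C\|z-\rho(z)\|_1=\bar{F}_k(z),
\]
provided $\|\alpha\|_\infty\leq C$. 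Taking the minimum over $k$ then gives $\alpha\cdot z\leq\bar{F}(z)$, and the inequality $\geq$ follows.

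\textbf{Main obstacle: bounding $\|\alpha\|_\infty$.} The real work is showing that an optimal $\alpha$ can be chosen with $\|\alpha\|_\infty\leq (n+1)^2(F^U-F^L)$. Arbitrary optimal $\alpha$ may be unbounded, because $\Lf$ is not full-dimensional in the "box directions": for instance, $\alpha_j$ can be pushed to $-\infty$ if $j$ is dominated in $D$. So a normalization is needed.

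I would take $\alpha$ to be a vertex of the LP $\max\{\alpha\cdot(1,x):\alpha\cdot v\leq F(v),\ v\in\Lf\}$. This LP is bounded, and its feasible region is pointed because $\Lf$ spans $\R^{n+1}$: it contains the ideals generated by each single element. At a vertex, $\alpha$ is determined by $n+1$ linearly independent tight points of $\Lf$. Using $F(0,\mathbf{0})=0$, I would try to choose these tight points, or exchange them for ones in the same basis, among the principal order ideals $\downarrow\!\{i\}$ of the partial order on $\Lf$. In that case the constraint matrix is unitriangular in a linear extension of the order. Solving by back-substitution, each coordinate becomes a signed sum of at most $n+1$ value differences, each bounded by roughly $(n+1)(F^U-F^L)$, which yields the $(n+1)^2$ factor.

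If that exchange argument fails, the fallback is a direct Cramer's-rule bound. The tight vectors are $0$-$1$ and each row is an ideal indicator, and I would bound the resulting determinant ratios combinatorially using the laminar/ideal structure, still aiming for a $(n+1)^2(F^U-F^L)$ bound. Either way, this coefficient bound is the step I expect to be hardest.
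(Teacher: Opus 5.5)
\textbf{Verdict: genuine gap.} Your architecture matches the paper's. You get submodularity of $\bar F_k$ from Theorem~\ref{them:extension-lattice}, reduce to $\sconv(F)(1,x)$ via Proposition~\ref{prop:pers-lift}, and obtain the easy inequality from $\bar F=F$ on $\Lf$. For the reverse inequality you take an optimal vertex $\alpha^*$ of $Q_F:=\{\alpha\mid\langle\alpha,v\rangle\le F(v)\ \text{for } v\in\Lf\}$ and apply H\"older's inequality. The gap is that the only nontrivial step, $\|\alpha^*\|_\infty\le(n+1)^2(F^U-F^L)$ for every vertex, is left unproved, and neither route you sketch works.

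\emph{Principal ideals.} The tight points at a vertex are dictated by $\alpha^*$, and the principal ideals generally cannot all be tight. For $X=\{0,1\}^n$ they are $(1,\mathbf{0})$ and $(1,e_i)$. Tightness there forces $\langle\alpha,(1,x)\rangle$ to be the affine interpolation of $f$ on $\{\mathbf{0},e_1,\ldots,e_n\}$, which is typically not an underestimator (take $f=-x_1x_2$, where the interpolation is $0$ but $f(1,1)=-1$). Nor can you uncross the tight family into such a form, since $F=\min_kF_k$ is not submodular; uncrossing works only separately for each $F_k$.

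\emph{Cramer's rule.} This fallback cannot succeed from the shape of the tight rows alone. For $X=\{0,1\}^n$ every $(1,x)$ lies in $\Lf$, so the tight matrix can contain an arbitrary nonsingular $0$-$1$ block, and such matrices can have exponentially large inverse entries. That is the source of the $2n(\sqrt{n}/2)^n$ bound mentioned in the paper. A polynomial bound must use feasibility of $\alpha^*$ at the non-tight points of $\Lf$.

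\emph{The missing idea} is Lemma~\ref{lem:bounded-extreme-point-lattice}. It needs only that $\alpha^*$ is feasible on a lattice family and tight at $m:=n+1$ linearly independent points $v^1,\ldots,v^m\in\Lf$.
\begin{itemize}
\item The column supports $I_j:=\{i\mid v^i_j=1\}$ are nonempty and pairwise distinct. Order the columns by a linear extension $\pi$ of strict inclusion among them.
\item Then $w^t:=\bigvee_{j\ge t}\bigwedge_{i\in I_{\pi(j)}}v^i=\sum_{j\ge t}e_{\pi(j)}$. So a chain of $\Lf$ whose consecutive members differ in one coordinate is written using at most $m^2$ occurrences of the $v^i$.
\item By induction on the number $T$ of occurrences, $\langle\alpha^*,z\rangle\ge F^U-T\Delta$, where $\Delta:=F^U-F^L$. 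The base case is tightness. For $z=u\wedge u'$ use $\langle\alpha^*,u\wedge u'\rangle=\langle\alpha^*,u\rangle+\langle\alpha^*,u'\rangle-\langle\alpha^*,u\vee u'\rangle$ with $\langle\alpha^*,u\vee u'\rangle\le F(u\vee u')\le F^U$, and symmetrically for $z=u\vee u'$.
\item Since also $\langle\alpha^*,w^t\rangle\le F^U$, consecutive differences give $|\alpha^*_{\pi(t)}|\le m^2\Delta$ for $t<m$.
\item For the last coordinate, $\alpha^*_{\pi(m)}=\langle\alpha^*,w^m\rangle\in[F^U-m^2\Delta,F^U]$. This is bounded by $m^2\Delta$ because $F^L\le 0\le F^U$, which holds since $F_k(0,\mathbf{0})=0$.
\end{itemize}
This is exactly what the constant $C=(n+1)^2(F^U-F^L)$ in~\eqref{eq:extension-sub} is calibrated to. Without this lemma or a substitute, your proof of $\sconv(\bar F)(1,x)\ge\sconv(F)(1,x)$ is incomplete.
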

\begin{remark}\label{rmk:extension_hypercube}
When the domain $X$ is $\{0,1\}^n$, we  can simplify the expression for the extended functions. In this case, the lifted domain is $\Lf = \{(t,x)\in\{0,1\}^{n+1}\mid t\cdot \mathbf{1}\geq x\}$.  Therefore, we obtain 
\[
\rho(t,x) = \bigl(\max\{t,x_1,\ldots,x_n\},x\bigr) \quad \text{ and } \quad\|(t,x)-\rho(t,x)\|_1=\max\{t,x_1,\ldots,x_n\}-t
\]
Hence, the extended function defined in~\eqref{eq:extension-sub} reduces to
\begin{equation}\label{eq:extension-hypercube}
\bar F_k(t,x) =
\max\{t,x_1,\ldots,x_n\}f_k(x) + C\bigl(\max\{t,x_1,\ldots,x_n\}-t\bigr),
\quad \for k\in[K],
\end{equation}
where $C=(n+1)^2(F^U-F^L)$, with $F^L\leq F_k(t, x)\leq F^U$ for $(t, x)\in\Lf$ and $k\in [K]$.\qed 
\end{remark}
}

\section{Envelopes of the minimum of submodular functions}\label{section:env_minimum}
{\revise
The theoretical developments in Section~\ref{section:preliminaries}, particularly Theorems~\ref{them:extension-lattice} and~\ref{theo:extension-recovery}, reduce the convex envelope of a disjunctive submodular function to the sublinear envelope of a suitably chosen minimum of submodular functions. However, these reductions do not, by themselves, guarantee that the resulting problems are computationally tractable. The geometry of the envelope of this minimum is significantly more complex than that of a single submodular function, as we discuss below.
}

 Specifically, it is well known that the Lov\'asz extension, described via the Kuhn triangulation~\cite{de2010triangulations}, describes the convex envelope of a single submodular function $f$ and this triangulation does not yield the envelope when $f$ does not satisfy this property~\cite{lovasz1983submodular}. Moreover, for a submodular function $f:\{0, 1\}^n\rightarrow \R$ with $f(\mathbf{0}) = 0$, this convex envelope coincides with the sublinear envelope of $f$ because each of the simplices in the Kuhn triangulation contains $\mathbf{0}$ as a vertex. It follows that the minimum of submodular functions, a function that is not itself submodular, is not convexified by restricting attention to the Kuhn's triangulation. We show using a concrete example that the subdivision associated with the envelope has a much richer structure, sometimes matching the sublinear envelope using a different triangulation, while in other cases, departing from the sublinear envelope.

\begin{example}\label{ex:single_point_violation}
Consider the submodular function $ f(x) = -x_1x_3 - 2x_2x_3 $ defined on $ x \in \{0,1\}^3 $. It is straightforward to verify that $ f $ is a submodular function. Its convex envelope is given by the Lovász extension as follows:
\[
\conv(f)(x) = \max\{-x_1 - 2x_3,\; -2x_2 - x_3,\; -x_1 - 2x_2,\; -3x_3\}, \quad \for x \in [0,1]^3,
\]
where each affine function is obtained by interpolating $f$ over vertices of a polytope within the polyhedral subdivision shown in Figure~\ref{fig:origianl}. Note that this subdivision can be further refined into the Kuhn triangulation. 

We now introduce a parametric family of functions $g^k:\{0,1\}^3 \to \R$ by perturbing the value of $f$ at the vertex $(1,1,1)$. Specifically,  for $k \geq 0$, let
\[
g^k(x) := f(x) + k x_1 x_2 x_3. 
\]
In particular, for $0 < k <3$, $g^k$ violates submodularity. To see this, we consider the vectors  $ x' = (1,0,1) $ and $ x'' = (0,1,1) $ and observe that 
\[
g^k(x') + g^k(x'') = -3 < -3 + k = g^k(x' \vee x'') + g^k(x' \wedge x''),
\]
which contradicts the submodularity condition. Nevertheless, for any $0\leq k \leq 2$, $g^k(x)$ is facially submodular function and can be expressed as the minimum of two submodular functions:
 \[
 g^k(x) = \min\{k x_1,\; k x_2x_3\} + f(x) = \min \bigl\{k x_1 - x_1x_3 - 2x_2x_3,\; -x_1x_3 - (2-k)x_2x_3\bigr\}. 
 \]
Using the \texttt{Julia} package \texttt{Polyhedra.jl}~\cite{legat2023polyhedral}, we obtain the convex envelopes of $ g^1 $ and $ g^2 $ as follows:
\[
\begin{aligned}
    &\conv(g^1)(x) = \max\{-x_1 - 2x_2,\; -2x_3,\; -x_2 - x_3\} ,\\
    &\conv(g^2)(x) = \max\{-x_1 - 2x_2,\; -2x_3,\; -x_2 - x_3,\; x_1 - x_3 - 1\}.
\end{aligned}
\]
As with $f$, the convex envelope of $g^1$ is positive homogeneous, and thus coincides with its sublinear envelope. However, the polyhedral subdivision associated with the envelope of $g^1$, depicted in Figure~\ref{fig:increase-1}, cannot be refined into the Kuhn triangulation due to the presence of a simplex that contains both $(0,1,1)$ and $(1,0,1)$ but not their join $(1,1,1)$. For the convex envelope of $g^2$, the corresponding subdivision, given in  Figure~\ref{fig:increase-2}, contains a polytope that does not contain the origin. This structural shift explicitly illustrates the discrepancy between convex and sublinear envelopes. \qed

    \begin{figure}[htbp]
    \centering
    \begin{tikzpicture}
        \node (left) at (0,0) {
            \begin{subfigure}[t]{0.28\textwidth}
                \centering
                \includegraphics[width=\textwidth]{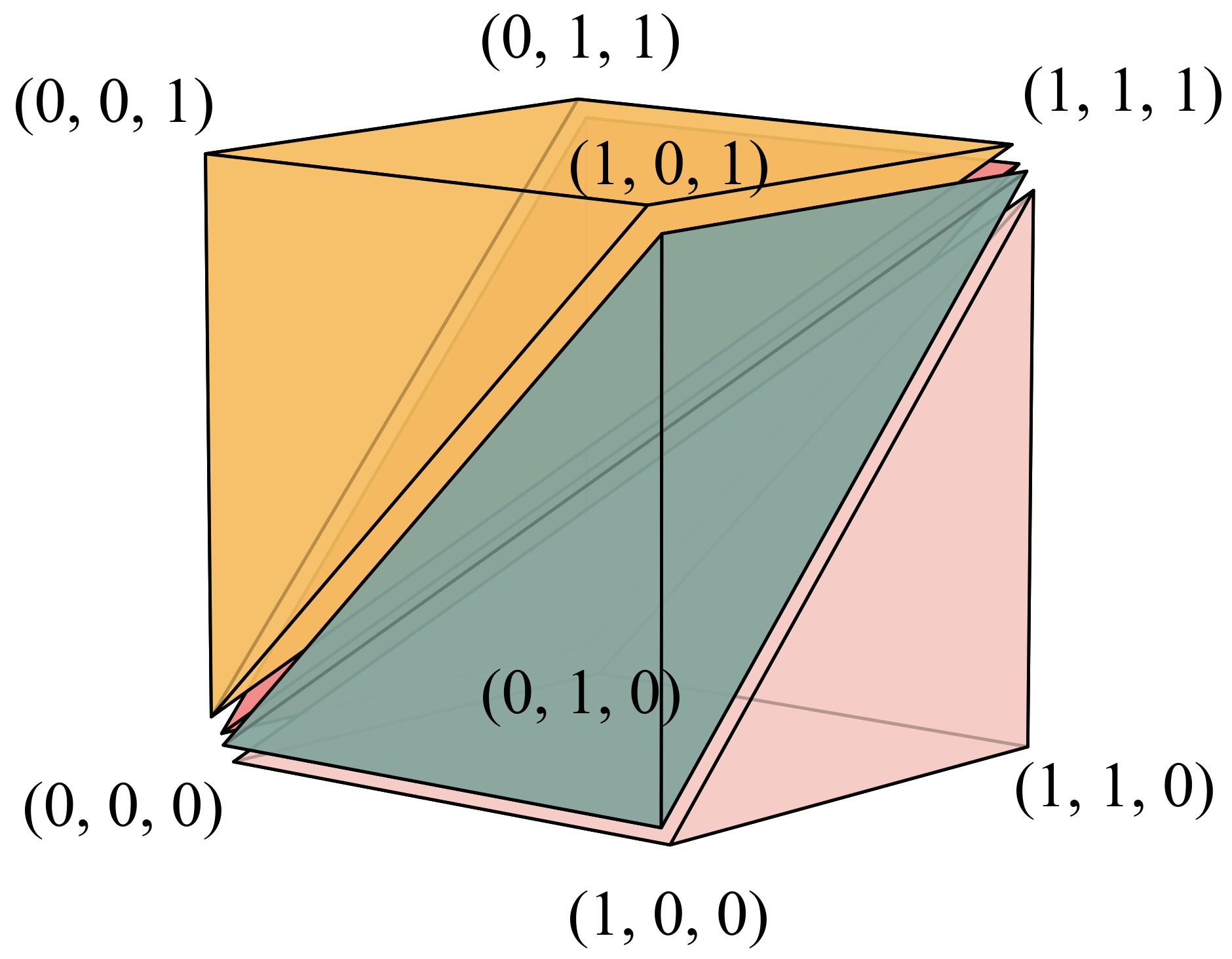}
                \caption{Subdivision of $f$.}
                \label{fig:origianl}
            \end{subfigure}
        };

        \node (middle) at (5.4,0) {
            \begin{subfigure}[t]{0.28\textwidth}
                \centering
                \includegraphics[width=\textwidth]{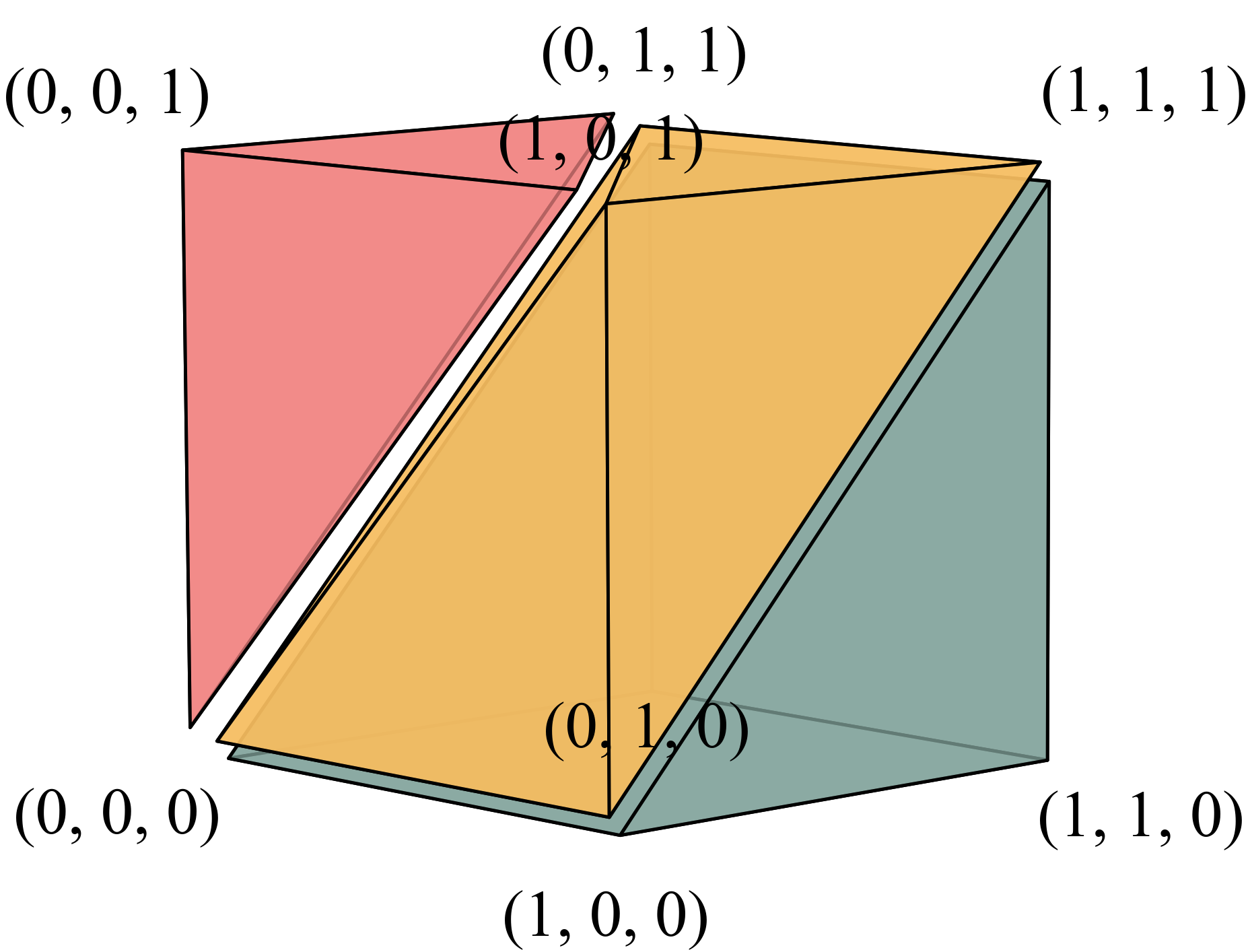}
                \caption{Subdivision of $g^1$.}
                \label{fig:increase-1}
            \end{subfigure}
        };

        \node (right) at (10.8,0) {
            \begin{subfigure}[t]{0.28\textwidth}
                \centering
                \includegraphics[width=\textwidth]{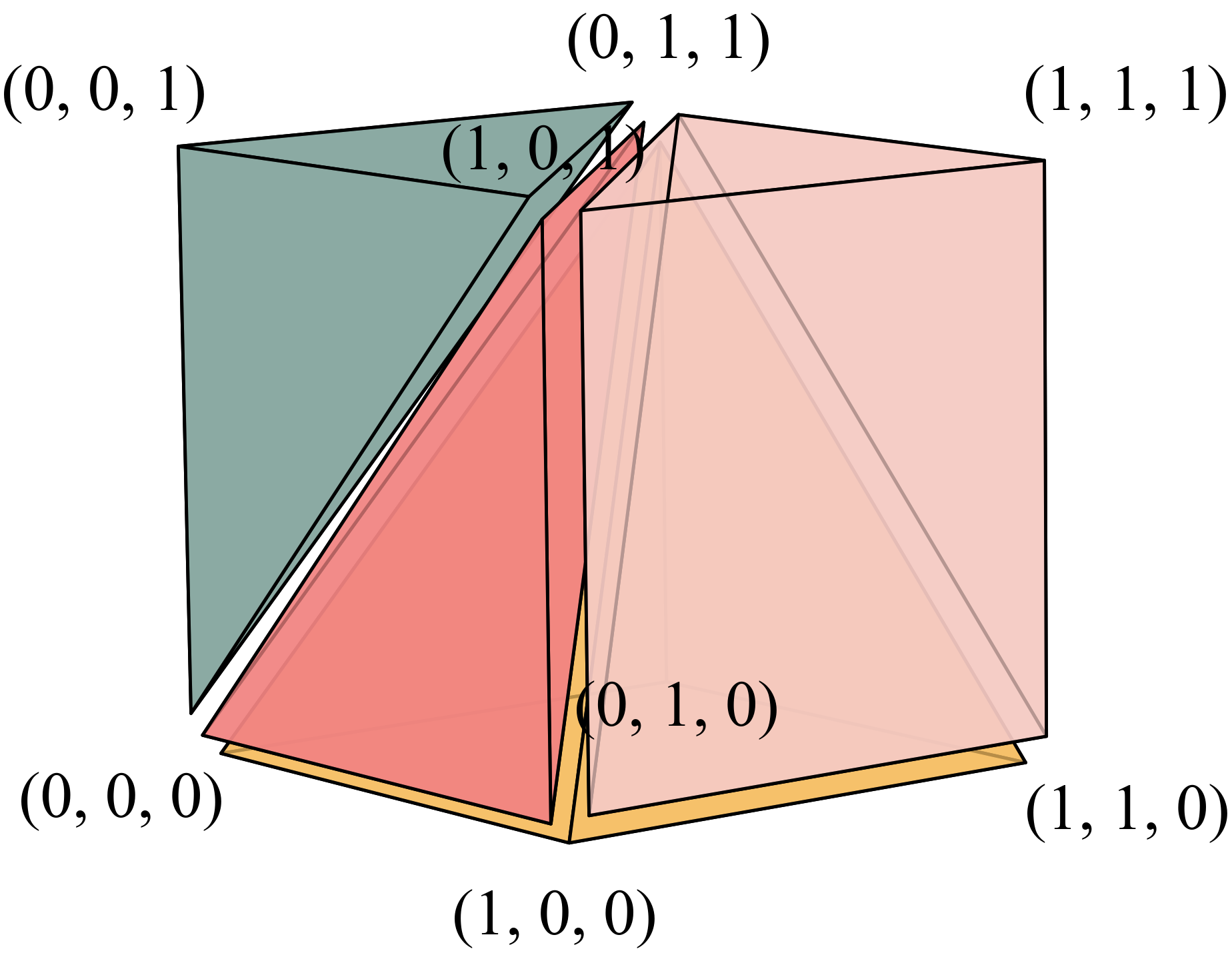}
                \caption{Subdivision of $g^2$.}
                \label{fig:increase-2}
            \end{subfigure}
        };

        \draw[->, thick] (left.east) -- (middle.west)
            node[midway, above, font=\scriptsize] { };

        \draw[->, thick] (middle.east) -- (right.west)
            node[midway, above, font=\scriptsize] { };
    \end{tikzpicture}
    
\caption{Polyhedral subdivisions of the unit hypercube $[0,1]^3$ induced by the convex envelopes of $f$, $g^1$, and $g^2$. As the perturbation parameter $k$ increases, the subdivision transitions from a classic Kuhn triangulation structure (a) to a configuration where the triangulation does not refine to the Kuhn's triangulation even though the convex envelope matches the sublinear envelope (b), and finally to one containing a polytope that excludes the origin (c), illustrating that convex and sublinear envelopes can be different.}    \label{fig:change_subdivision}
\end{figure}
\end{example}

In this section, we study the convex and sublinear envelopes of pointwise minima of $K$ submodular functions under various settings. In particular, Section~\ref{sec:subm} develops efficient combinatorial separation algorithms for the case $K=2$, addressing the challenge highlighted in Example~\ref{ex:single_point_violation}. Section~\ref{sec:inter} provides explicit envelope descriptions for the case $K=2$ when one of the functions is linear. Finally, Section~\ref{section:LP} presents compact linear programming (LP) formulations to separate the envelopes, assuming such formulations are available for each function $f_k$.

\subsection{Fast separation via  combinatorial algorithms}\label{sec:subm}
{\revise
In this subsection, we focus on devising fast separation algorithms for the pointwise minimum of two submodular functions. In Section~\ref{section:sublinear_oracle}, we use weighted polymatroid intersection (WPI) algorithms to construct a sublinear separation oracle. Section~\ref{section:conv_lattice} uses this separation oracle to resolve the problem of convexifying disjunctive submodular functions. Section~\ref{section:conv_intersecting} expands the sublinear envelope construction to a larger class of functions that define an intersecting family. Besides expanding the result to more general settings, our study of intersecting families allow us to construct sublinear envelope of a pointwise minimum of a submodular function and a linear function in closed-form. This structure appears in applied settings such as the inventory application discussed in Section~\ref{sec:inventory}.
}

\subsubsection{Sublinear Separation Oracle via Weighted Polymatroid Intersection}\label{section:sublinear_oracle}
Before presenting our separation results, we review some relevant results on the extended polymatroid. For a function $g:X \subseteq \{0,1\}^n \to \R$, we define the extended polymatroid associated with $g$ as:
\[
\EP_g:=\bigl\{ \alpha \in \R^n \bigm| \langle \alpha, v \rangle \leq g(v)\: \for v \in X \bigr\}. 
\]
Although the term \textit{extended polymatroid} is typically reserved for submodular functions, we use the notation $\EP_g$ for an arbitrary set function $g$ throughout this paper. This slight abuse of terminology simplifies the notation and will cause no ambiguity in our results. Given a weight vector $w \in \R^n$ and a pair of submodular functions $f_1, f_2$ mapping from $\{0,1\}^n$  to $\R$, the weighted polymatroid intersection problem finds a vector of maximum weights in the intersection of two extended polymatroids
\begin{equation}\label{eq:WPI}
    \max \bigl\{ \langle w, \alpha \rangle \bigm|  \alpha \in \EP_{f_1} \cap \EP_{f_2} \bigr\}. \tag{\textsc{WPI}}
\end{equation}
For solving~\eqref{eq:WPI}, \cite{cunningham1985primal} propose a primal--dual polynomial-time combinatorial algorithm that relies on an efficient submodular minimization oracle. Subsequently, \cite{frank1987application} transform this polynomial-time combinatorial procedure into a strongly polynomial-time algorithm via Simultaneous Diophantine Approximation. Later, \cite{fujishige1989strongly} further improve the approach by developing a purely combinatorial strongly polynomial-time algorithm, which utilizes tree projection and cost scaling techniques to solve a sequence of submodular flow problems with bounded cost coefficients. More recently, \cite{lee2015faster} introduce a cutting-plane method to significantly improve the theoretical running time for solving~\eqref{eq:WPI}.

Now, we are ready to present a sublinear separation oracle. To do so, we interpret a vector $\alpha \in \R^n$ in~\eqref{eq:WPI} as defining a linear function on $\R^n$, and accordingly treat the feasible region as the set of all possible linear underestimators of $\min\{f_1,f_2\}$. In other words,~\eqref{eq:WPI} finds a linear underestimator of $\min\{f_1,f_2\}$ whose value at  $w$ is maximized. This interpretation allows us to devise an efficient separation oracle for the sublinear envelope of the minimum of two submodular functions. Formally, for a convex function $g:Y\subseteq\R^n\to\R$ and a point $(\bar y,\bar\mu)$ with $\bar y\in Y$, the separation problem consists of either concluding that $\bar\mu\geq g(\bar y)$, or finding an affine underestimator $\alpha^\intercal y+\beta$ of $g$ satisfying $\bar\mu<\alpha^\intercal\bar y+\beta$.

\begin{proposition}\label{prop:sep_sublinear}
Consider $f:\{0,1\}^n \to \R$ defined as $f(x) := \min \bigl\{f_1(x), f_2(x)\bigr\}$ for $x \in \{0,1\}^n$, where, for $k=1,2$, $f_k$ is submodular and satisfies $f_k(\mathbf{0})\geq 0$. 
For a given point $(\bar{x}, \bar{\mu}) \in \Q^{n+1}$ with $\bar{x}\geq 0$, the separation problem for the sublinear envelope of $f$ reduces to solving~\eqref{eq:WPI} with a weight vector $\bar{x}$, a problem that can be solved in strongly polynomial time. 
\end{proposition}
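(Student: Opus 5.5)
The plan is to use the primal representation of the sublinear envelope in Lemma~\ref{lemma:dualsconv}. Assumption~\ref{ass:proper} holds because $f(\mathbf{0})=\min\{f_1(\mathbf{0}),f_2(\mathbf{0})\}\geq 0$, so the zero linear function underestimates $f$ on $\{0,1\}^n$. As discussed after that lemma, both the primal and the dual problem are feasible LPs, so $\sconv(f)=\cl\sconv(f)$ on $\cone(\{0,1\}^n)=\R^n_+$. Hence, for $\bar x\geq 0$,
\[
\sconv(f)(\bar x)=\max\bigl\{\langle \alpha,\bar x\rangle \bigm| \langle\alpha,v\rangle\leq f(v)\ \for v\in\{0,1\}^n\bigr\}.
\]

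The next step identifies this feasible region with $\EP_{f_1}\cap\EP_{f_2}$. Since $f(v)=\min\{f_1(v),f_2(v)\}$, the condition $\langle\alpha,v\rangle\leq f(v)$ holds exactly when $\langle\alpha,v\rangle\leq f_1(v)$ and $\langle\alpha,v\rangle\leq f_2(v)$. Therefore $\EP_f=\EP_{f_1}\cap\EP_{f_2}$, and $\sconv(f)(\bar x)$ is exactly the optimal value of~\eqref{eq:WPI} with weight $w=\bar x$.

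The separation oracle then works as follows. Solve~\eqref{eq:WPI} with $w=\bar x$ and obtain an optimal $\alpha^*$. The optimum is attained: the value is bounded above by the dual conic representation, and the region is a nonempty polyhedron containing $0$. If $\bar\mu\geq\langle\alpha^*,\bar x\rangle=\sconv(f)(\bar x)$, report feasibility. Otherwise, $\langle\alpha^*,x\rangle$ (with $\beta=0$) is a linear underestimator of $\sconv(f)$, since $\alpha^*\in\EP_f$ and $\sconv(f)$ is the supremum of such functions, and it is violated at $(\bar x,\bar\mu)$. This is the required separating inequality.

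Complexity comes from the cited results. The strongly polynomial combinatorial algorithms of \cite{frank1987application,fujishige1989strongly} solve~\eqref{eq:WPI} for submodular $f_1,f_2$ given evaluation oracles, and they return an optimal $\alpha^*$ (indeed a vertex). Comparing $\bar\mu$ with $\langle\alpha^*,\bar x\rangle$ takes $O(n)$ arithmetic operations on rationals.

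The main obstacle is a technical mismatch: the classical WPI results are usually stated for normalized functions with $f_k(\emptyset)=0$, or for nonnegative weights on polymatroids. Nonnegativity of the weights is guaranteed by $\bar x\geq 0$. For the normalization, I would handle $f_k(\mathbf{0})\geq 0$ as follows. The constraint at $v=\mathbf{0}$ reads $0\leq f_k(\mathbf{0})$ and is automatically satisfied. Replacing $f_k$ by $\tilde f_k(v)=f_k(v)$ for $v\neq\mathbf{0}$ and $\tilde f_k(\mathbf{0})=0$ leaves $\EP_{f_k}$ unchanged. This modification lowers the value at the bottom element, and lowering $f(\mathbf{0})$ can only help the inequality $f(x)+f(y)\geq f(x\vee y)+f(x\wedge y)$ when $x\wedge y=\mathbf{0}$. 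In all other cases the inequality is unchanged, except when $x$ or $y$ equals $\mathbf{0}$, where it holds with equality. So $\tilde f_k$ remains submodular, and the standard WPI algorithms apply directly.
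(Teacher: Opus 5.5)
Your argument is the paper's proof: Lemma~\ref{lemma:dualsconv} identifies $\sconv(f)(\bar x)$ with the optimal value of \eqref{eq:WPI} at weight $\bar x$ (because $\EP_f=\EP_{f_1}\cap\EP_{f_2}$). You then compare this value with $\bar\mu$ and return $\alpha^*$ if the point is violated. Your normalization of $f_k(\mathbf{0})$ to $0$ is a correct extra check that the paper skips by citing \cite[Corollary 47.4d]{schrijver2003combinatorial}.

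One slip needs fixing. The zero function is not a linear underestimator of $f$ in general, and $\mathbf{0}$ need not lie in $\EP_{f_1}\cap\EP_{f_2}$, because $f_k$ may be negative away from $\mathbf{0}$; for example, take $f_1=f_2=-x_1$. Use instead $\alpha=-M\mathbf{1}$ with $M\geq\max\{0,-\min_v f(v)\}$. It satisfies $\langle\alpha,v\rangle\leq -M\leq f(v)$ for $v\neq\mathbf{0}$, and $0\leq f(\mathbf{0})$ at $v=\mathbf{0}$. This gives both Assumption~\ref{ass:proper} and the nonemptiness you need for the optimum to be attained.
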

\begin{proof}
    For a given point $(\bar{x}, \bar{\mu}) \in \Q^{n+1}$ with $\bar{x} \geq 0$, we solve~\eqref{eq:WPI} with weight vector $\bar{x}$, obtaining an optimal solution $\alpha^*$ and an optimal value $v^*$. This can be done in strongly polynomial time~\cite[Corollary 47.4d]{schrijver2003combinatorial}. Since $\{0,1\}^n$ is a finite set and $f(\mathbf{0})=\min\{f_1(\mathbf{0}),f_2(\mathbf{0})\}\ge 0$, Lemma~\ref{lemma:dualsconv} implies that $v^*=\sconv(f)(\bar{x})$. Thus, if $\bar{\mu} \geq v^*$ we can conclude that $\bar{\mu} \geq \sconv(f)(\bar{x})$. Otherwise, the optimal solution $\alpha^*$ yields a desired separating linear function, that is,  $\bar{\mu}< \langle \alpha^*, \bar{x}\rangle  $  and $\langle \alpha^*, x\rangle \leq f(x)$ for $x \in \{0,1\}^n$ which implies $\langle \alpha^*, x\rangle \leq \sconv(f)(x)$ for $x \in \R^n_+$. 
\end{proof}

\subsubsection{On the case of a Lattice family}\label{section:conv_lattice}
{\revise
For the minimum of two submodular functions, $f_1$ and $f_2$,  on a lattice family $X$, Algorithm~\ref{alg:sep-lattice} separates a given point $(\bar{x}, \bar{\mu})$ from  the epigraph of $\conv(f)$ in strongly polynomial time. For $k=1,2$, let $\bar F_k:\{0,1\}^{n+1}\to\R$ be the extension of $f_k$ defined as in~\eqref{eq:extension-sub}, and let $\bar F(t,x):=\min\{\bar F_1(t,x),\bar F_2(t,x)\}$. The algorithm invokes Proposition~\ref{prop:sep_sublinear} to separate the lifted point $\bigl((1,\bar{x}),\bar{\mu}\bigr)$ from the sublinear envelope of $\bar{F}$. The resulting inequality is then transformed to the original space separating the convex envelope of $f$. The correctness of the algorithm follows from Theorem~\ref{theo:extension-recovery}. 
}


\begin{algorithm}[!htbp]
\caption{Separation of the convex envelope over a lattice family}
\label{alg:sep-lattice}
\begin{algorithmic}[1]
\Require $f(x) = \min\{f_1(x),f_2(x)\}$ with $f_1, f_2:X \subseteq \{0,1\}^{n}\to\R$; point $(\bar x,\bar\mu)\in\mathbb{Q}^{n+1}$ with $\bar{x} \in \conv(X)$.
\Ensure A separation oracle for the convex envelope of $f$ with the given point $(\bar{x},\bar{\mu})$.
\State Set $\bar F_k\leftarrow \text{ the extension of $f_k$ given by~\eqref{eq:extension-sub}}$.
\State Set $\bar F\leftarrow\min\{\bar F_1,\bar F_2\}$.
\State Call the separation oracle in Proposition~\ref{prop:sep_sublinear} to separate $\sconv(\bar{F})$ at the point $\bigl((1,\bar x),\bar\mu\bigr)$. 
\If{$\bar{\mu} \geq \sconv(\bar{F})(1,\bar{x}) $}
    \State \Return $\bar{\mu} \geq \conv(f)(\bar{x})$.
\Else 
\State Let $\bigl\langle (\alpha_0, \alpha), (t,x) \bigr\rangle $ be the optimal linear function derived from \eqref{eq:WPI} as in Proposition~\ref{prop:sep_sublinear}. 
\State \Return the affine function $\langle \alpha, x\rangle + \alpha_0$
\EndIf
\end{algorithmic}
\end{algorithm}

\begin{theorem}\label{convex-sep_of_lat}
Consider $f(x):=\min\{f_1(x),f_2(x)\}$, where $f_k:X\to\R$ is submodular on a lattice family $X$. Algorithm~\ref{alg:sep-lattice} solves the separation problem of $\conv(f)$ in strongly polynomial time.
\end{theorem}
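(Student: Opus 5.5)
The proof has two parts: correctness and strong polynomiality. Correctness rests on Theorem~\ref{theo:extension-recovery}; complexity rests on Proposition~\ref{prop:sep_sublinear}.

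\textbf{Preprocessing.} First, using Remark~\ref{rmk:lattice-family}, we may assume $X$ has a partial-order representation. Deleting fixed coordinates and merging equivalent ones takes polynomial time. It is also consistent with separation: a cut in the reduced space lifts back by substituting the identified or fixed coordinates.

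\textbf{Correctness.} By Theorem~\ref{theo:extension-recovery}, $\bar F_1,\bar F_2$ are submodular on $\{0,1\}^{n+1}$ and $\conv(f)(\bar x)=\sconv(\bar F)(1,\bar x)$. We have $\bar F_k(0,\mathbf 0)=F_k(0,\mathbf 0)=0$, since $(0,\mathbf 0)\in\Lf$ is fixed by $\rho$. Hence the hypotheses of Proposition~\ref{prop:sep_sublinear} hold, and the weight vector $(1,\bar x)$ is nonnegative because $\bar x\in\conv(X)\subseteq[0,1]^n$. So \eqref{eq:WPI} returns $v^*=\sconv(\bar F)(1,\bar x)=\conv(f)(\bar x)$, and the test $\bar\mu\ge v^*$ correctly certifies membership. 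Otherwise there is $(\alpha_0,\alpha)$ with $\alpha_0 t+\langle\alpha,x\rangle\le\bar F(t,x)$ on $\{0,1\}^{n+1}$ and $\alpha_0+\langle\alpha,\bar x\rangle>\bar\mu$. For $x\in X$, the point $(1,x)$ lies in $\Lf$, so $\bar F(1,x)=\min_k F_k(1,x)=\min_k f_k(x)=f(x)$. Thus $\langle\alpha,x\rangle+\alpha_0\le f(x)$ on $X$, and by convexity the inequality also holds for $\conv(f)$ on $\conv(X)$. The returned affine function is therefore a valid separating cut.

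\textbf{Complexity.} The main obstacle is making the extensions usable inside a strongly polynomial WPI routine. That routine needs value oracles for $\bar F_k$ and needs the constant $C=(n+1)^2(F^U-F^L)$.

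\begin{itemize}
\item \emph{Evaluating $\bar F_k$.} This requires computing $\rho(z)$: a meet, a transitive-closure step over $D$, and a join, all $O(n^2)$. It then requires one call to $f_k$.
\item \emph{Bounds $F^L,F^U$.} Here $F^L=\min\{0,\min_k\min_X f_k\}$, which is submodular minimization over a lattice family and is strongly polynomial. For $F^U$, any valid upper bound suffices, since any $C\ge$ the threshold works. I would take $F^U=\max\{0,\max_k\max_{x\in X}f_k(x)\}$ bounded above by, e.g., $f_k(\bot)+\sum_i|f_k(\text{chain increments})|$. Alternatively, use the greedy bound from an extreme point of $\EP_{f_k}$ on a maximal chain of $X$: every $x\in X$ is reachable along chains, and submodularity gives $f_k(x)\le$ an expression in $O(n)$ oracle values.
\end{itemize}

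Strong polynomiality is preserved because $C$ enters only as an additive oracle value. The WPI algorithms cited (Fujishige--Frank, \cite[Cor.~47.4d]{schrijver2003combinatorial}) run in a number of arithmetic operations and oracle calls independent of the magnitudes of the function values. Combining these pieces, Algorithm~\ref{alg:sep-lattice} runs in strongly polynomial time.
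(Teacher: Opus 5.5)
Your argument is correct and is essentially the paper's proof. Theorem~\ref{theo:extension-recovery} gives submodularity of $\bar F_k$ and $\conv(f)(\bar x)=\sconv(\bar F)(1,\bar x)$, and Proposition~\ref{prop:sep_sublinear} then handles both the separation and the complexity; your checks that $\bar F_k(0,\mathbf 0)=0$ and that the cut is valid because $\bar F(1,x)=f(x)$ on $X$ are useful details the paper leaves implicit. One loose point in your $F^U$ sketch: a greedy extreme point of $\EP_{f_k}$ gives a lower bound on $f_k$, not an upper one, but a valid choice is easy, e.g.\ $f_k(\mathbf 0)+\sum_{i}\max\{0,f_k(m_i)-f_k(m_i-e_i)\}$ with $m_i$ the least element of $X$ containing coordinate $i$, obtained by telescoping along a linear extension and applying submodularity.
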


\begin{proof}
It follows from Theorem~\ref{theo:extension-recovery} that $\bar{F}_k$ is submodular. Thus, we can use the separation oracle given by Proposition~\ref{prop:sep_sublinear} to separate $\sconv(\bar{F})$. If $\bar{\mu} \geq \sconv(\bar{F})(1,\bar{x})$ then we can conclude $\bar{\mu} \geq \sconv(\bar{F})(1,\bar{x}) = \conv(f)(\bar{x})$, where the equality follows from Theorem~\ref{theo:extension-recovery}. Otherwise, let $\langle (\alpha_0,\alpha) ,(t,x)\rangle$ be the linear function returned by the oracle.  Then, $\bar{\mu} < \langle (\alpha_0,\alpha) ,(1,\bar{x})\rangle = \alpha_0 + \langle \alpha, \bar{x}\rangle$. Moreover, for $x\in\conv(X)$, 
\[
\alpha_0+\langle\alpha,x\rangle
\leq \sconv(\bar F)(1,x)
=\conv(f)(x),
\]
where the equality follows from Theorem~\ref{theo:extension-recovery}. Hence, $\mu\geq\alpha_0+\langle\alpha,x\rangle$ is valid for the epigraph of $\conv(f)$ and is violated by $(\bar x,\bar\mu)$. Therefore, Algorithm~\ref{alg:sep-lattice} solves the separation problem for $\conv(f)$. Last, the computational complexity of Algorithm~\ref{alg:sep-lattice} follows that of the sublinear separation oracle in Proposition~\ref{prop:sep_sublinear}. \qed
\end{proof}
The unconstrained case, in which $X=\{0,1\}^n$, follows directly as a special case of Theorem~\ref{convex-sep_of_lat}. In this setting, we construct the extended functions $\bar{F}_k$ using~\eqref{eq:extension-hypercube} from Remark~\ref{rmk:extension_hypercube},  rather than~\eqref{eq:extension-sub}.
%
%

\subsubsection{On the case of an Intersecting Family}\label{section:conv_intersecting}
{\revise
We consider the case where the domain is an intersecting family. A set $X \subseteq \{0,1\}^n$ is called an intersecting family if, for any $x,y \in X$, one has
\[
\text{if } x \wedge y \neq 0, \text{ then } x \vee y \in X, x \wedge y \in X.
\]
A function $g:X \to \R$ is called an intersecting submodular function if,
\[
g(x)+g(y)\ge g(x\vee y) + g(x\wedge y) \quad \for x,y \in X \text{ with }  x \wedge y \neq 0. 
\]
For intersecting submodular functions, we can reduce sublinear-envelope separation to the optimization problem~\eqref{eq:WPI} by using extension techniques from~\cite{schrijver2003combinatorial}. However, such extension techniques do not allow construction of convex envelope even utilizing the lifting framework of Section~\ref{sec:from-sublinear-to-convex}. This is because, unlike in the lattice-family setting, the lifting does not preserve intersecting submodularity. The following example demonstrates this obstruction.}
\begin{example}
Consider the function $f : \{0,1\}^2 \to \mathbb{R}$ defined by $f(x)=x_1x_2$. We first observe that $f$ is intersecting submodular. Indeed, for any $x,y\in\{0,1\}^2$ satisfying $x\wedge y\neq 0$, either $x=y$ or one of the two vectors is necessarily equal to $(1,1)$. Hence,
\[
f(x)+f(y)=f(x\vee y)+f(x\wedge y).
\]
Thus, $f$ is intersecting submodular on $\{0,1\}^2$. We now consider the lift of $f$. Let $\Lf:=\{(0,\mathbf{0})\}\cup\{(1,x)\mid x\in\{0,1\}^2\}$, and define the lifted function $F:\Lf\to\mathbb{R}$ by $F(0,\mathbf{0})=0$ and $F(1,x)=f(x)$ for $x\in\{0,1\}^2$. $F$ is not intersecting submodular. To see this, consider $x=(1,1,0)$ and $y=(1,0,1)$. We have $x\wedge y=(1,0,0)\neq 0$ and $x\vee y=(1,1,1)$. However,
\[
F(x)+F(y)=f(1,0)+f(0,1)=0
<1=f(1,1)+f(0,0)
=F(x\vee y)+F(x\wedge y).
\]
Hence, the lifted function $F$ is not intersecting submodular. \qed
\end{example}

{\revise Regardless, the sublinear envelope is independently of interest in this setting. We develop a separation procedure for this envelope for a pointwise minimum of two intersecting submodular functions $f_1$ and $f_2$ defined on an intersecting family $X\subseteq\{0,1\}^n$. The main idea is to extend the domain to the full hypercube. We assume, in addition, that either $\mathbf{0}\notin X$ or $f_k(\mathbf{0})\geq 0$ for $k=1,2$. This condition ensures that the extended functions have well-defined sublinear envelopes.}




The extension consists of two steps. The first step is to extend each intersecting submodular function into a lattice family submodular function using the \textit{Dilworth truncation}~\cite[Section 49.6]{schrijver2003combinatorial}. Consider all possible  sums of elements in an intersecting family $X$, that is,
\[
\check{X}:= \biggl\{\sum_{v\in V}v \biggm | V\subseteq X,\ \sum_{v\in V}v\le \mathbf{1}\biggr\}. 
\]
The Dilworth truncation of an intersecting submodular function $g: X \to \R$ is given as follows:
\begin{equation}\label{eq:dilworth-truncation}
    \check{g}(x) := \min_{V\subseteq X}
    \left\{\sum_{v\in V} g(v) \;\middle|\; \sum_{v\in V} v=x \right\}
    \quad\for x\in\check{X},
\end{equation}
where by convention, the sum over an empty index set is $\mathbf{0}$, and thus $\mathbf{0} \in \check{X}$ and $\check{g}(\mathbf{0}) = 0$. For $k=1,2$, let $\check{f}_k$ denote the Dilworth truncation of $f_k$. By Theorem 49.4 of~\cite{schrijver2003combinatorial}, $\check{X}$ is a lattice family and $\check{f}_k$ is submodular on $\check{X}$.

The second step is to use the construction in Theorem~\ref{theo:extension-recovery} to extend $\check{f}_k$ into a submodular function on $\{0,1\}^n$. More specifically, let $\rho:\{0,1\}^n\to\check X$ be the retraction associated with $\check X$, defined as in~\eqref{eq:retraction}, and constant $C=n^2(f^U-f^L)$ with $f^L\leq\check f_k(x)\leq f^U$ for $x\in\check X$ and $k=1,2$. For $k=1,2$, define
\begin{equation}\label{eq:intersecting-extension}
\bar f_k(x):=
\check f_k\bigl(\rho(x)\bigr)
+C\|x-\rho(x)\|_1,
\quad\for x\in\{0,1\}^n. 
\end{equation}

{\revise 
The Dilworth truncation allows extension of each $f$ to $\check{f}$ maintaining the sublinear envelope.
Then, Theorem~\ref{theo:extension-recovery} is used to construct $\bar{f}_k$, which is submodular on $\{0,1\}^n$ and satisfies $\bar{f}_k(\mathbf{0})=\check{f}_k(\mathbf{0})=0$ for $k=1,2$. Proposition~\ref{prop:sep_sublinear} can be used to derive the sublinear envelope of the pointwise minimum of $\bar{f}_1$ and $\bar{f}_2$. This construction then provides a sublinear envelope of $\min\{f_1,f_2\}$, as we show next. The detailed proof is included in Appendix~\ref{proof:sep_intersecting}.}
\begin{proposition}\label{prop:sep_intersecting}
Consider $f(x):=\min\{f_1(x),f_2(x)\}$, where $f_k$ is an intersecting family submodular on  $X$ with either $\mathbf{0} \notin X$ or $f_k(\mathbf{0}) \geq 0$. Then,  for a given point $(\bar x,\bar\mu)\in \Q^{n+1}$ with $\bar x\in\cone(X)$, the separation problem for $\sconv(f)$ reduces to~\eqref{eq:WPI} for $\bar{f}_1$ and $\bar{f}_2$ with weight vector $\bar x$, where $\bar{f}_k$ is defined as in~\eqref{eq:intersecting-extension}. 
\end{proposition}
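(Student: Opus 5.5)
The plan is to prove that $\sconv(f)(x)=\sconv(\min\{\bar f_1,\bar f_2\})(x)$ for every $x\in\cone(X)$. Once this holds, Proposition~\ref{prop:sep_sublinear} applies directly to $\bar f_1,\bar f_2$: both are submodular on $\{0,1\}^n$ and satisfy $\bar f_k(\mathbf{0})=0$. Solving~\eqref{eq:WPI} with weight $\bar x$ then returns $\sconv(\min\{\bar f_1,\bar f_2\})(\bar x)$ together with a maximizing $\alpha^*$. Any such $\alpha^*$ satisfies $\langle\alpha^*,v\rangle\le\bar f_k(v)$ for $v\in\{0,1\}^n$. I will check that this implies it underestimates $f$ on $X$, so it is a valid separating inequality for $\sconv(f)$. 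By Lemma~\ref{lemma:dualsconv} and LP duality, the whole statement reduces to comparing the polyhedra $\EP_{f_1}\cap\EP_{f_2}$ (defined over $X$) and $\EP_{\bar f_1}\cap\EP_{\bar f_2}$, both evaluated against $\bar x\in\cone(X)$.

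\textbf{Step 1 (Dilworth truncation preserves linear underestimators).} I will show $\EP_{f_k}=\EP_{\check f_k}$, where the right side is taken over $\check X$. For the inclusion $\supseteq$: for $v\in X$, the one-element decomposition gives $\check f_k(v)\le f_k(v)$. The only exception is $v=\mathbf{0}\in X$, and there the hypothesis $f_k(\mathbf{0})\ge0=\langle\alpha,\mathbf{0}\rangle$ covers it. For the inclusion $\subseteq$: if $\langle\alpha,v\rangle\le f_k(v)$ on $X$, then for any decomposition $x=\sum_{v\in V}v$ we get $\langle\alpha,x\rangle\le\sum_{v\in V} f_k(v)$, and taking the minimum over decompositions gives $\langle\alpha,x\rangle\le\check f_k(x)$. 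It follows that $\EP_{f_1}\cap\EP_{f_2}=\EP_{\check f_1}\cap\EP_{\check f_2}$. Note that this is \emph{not} $\EP_{\min\{\check f_1,\check f_2\}}$ over $\check X$. This is harmless, because~\eqref{eq:WPI} is stated as the intersection of the two polyhedra, and that intersection equals $\EP_{\min\{f_1,f_2\}}$ over $X$. Hence its value at $\bar x$ equals $\sconv(f)(\bar x)$.

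\textbf{Step 2 (the extension to the cube does not change the optimum at $\bar x$).} For $x\in\check X$ we have $\rho(x)=x$, so $\bar f_k=\check f_k$ on $\check X$ and therefore $\EP_{\bar f_k}\subseteq\EP_{\check f_k}$. The work is the reverse direction, restricted to the objective: for $\bar x\in\cone(X)\subseteq\cone(\check X)$, I need
\[
\max\{\langle\bar x,\alpha\rangle\mid \alpha\in\EP_{\bar f_1}\cap\EP_{\bar f_2}\}=\max\{\langle\bar x,\alpha\rangle\mid\alpha\in\EP_{\check f_1}\cap\EP_{\check f_2}\}.
\]
I will mirror the argument used for Theorem~\ref{theo:extension-recovery}, whose constant $C=n^2(f^U-f^L)$ is the same as the one here. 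The idea is to take an optimal $\alpha$ for the right-hand problem whose entries are bounded in terms of $f^U-f^L$, and show that this $\alpha$ is feasible for the left-hand problem. Choose $\alpha$ to be a vertex optimum. Then $\alpha$ is determined by $n$ tight constraints at chains of points of $\check X$ (greedy/Lovász structure on each polymatroid, combined through the intersection), and its coordinates are differences of truncation values, so $|\alpha_i|\le n(f^U-f^L)$ or a similar bound. Then for $x\notin\check X$:
\[
\langle\alpha,x\rangle=\langle\alpha,\rho(x)\rangle+\langle\alpha,x-\rho(x)\rangle\le\check f_k(\rho(x))+n(f^U-f^L)\|x-\rho(x)\|_1\le\bar f_k(x).
\]
This shows $\alpha\in\EP_{\bar f_k}$. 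An alternative that avoids vertices uses the dual form instead: any conic combination representing $\bar x$ that uses points outside $\check X$ incurs a penalty $C\|x-\rho(x)\|_1$ large enough that replacing each such point by $\rho(x)$, plus a correction supported in $\check X$, does not increase the cost.

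\textbf{Main obstacle.} I expect Step 2 to be the hardest part, specifically obtaining the coordinate bound on an optimal $\alpha$ over the intersection of the two polyhedra, which are unbounded. Optimality for $\bar x$ in the cone keeps the problem bounded, but the vertex structure on the face $\check X$ needs care. I would first try to cite the bound established in the proof of Theorem~\ref{theo:extension-recovery}. That bound was derived for a lattice family and an objective in its cone, which is exactly the present situation with $\check X$ in place of $\Lf$. Once Step 2 is in hand, the conclusion follows by combining it with Step 1 and Proposition~\ref{prop:sep_sublinear}.
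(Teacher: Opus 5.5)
Your proposal is correct and is essentially the paper's proof. Step~1 is Proposition~\ref{prop:lattice-extension-1}, and Step~2 is Lemma~\ref{lem:sublinear-envelope-extension} applied to $\min\{\check f_1,\check f_2\}$ on $\check X$: its vertex bound, Lemma~\ref{lem:bounded-extreme-point-lattice}, holds for arbitrary functions on a lattice family, and since $\check f_k(\mathbf{0})=0$ gives $f^L\le 0\le f^U$, it yields $\|\alpha^*\|_\infty\le n^2(f^U-f^L)=C$ exactly; the conclusion then follows from Proposition~\ref{prop:sep_sublinear}.

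One correction: over $\check X$ one does have $\EP_{\min\{\check f_1,\check f_2\}}=\EP_{\check f_1}\cap\EP_{\check f_2}$, contrary to your aside in Step~1. That equality is precisely why Lemma~\ref{lem:sublinear-envelope-extension}, which is stated for a pointwise minimum, applies verbatim. So close Step~2 by citing it, not by your heuristic that vertex coordinates are differences of truncation values along a chain. That heuristic is true for a single polymatroid but false for the intersection of two.
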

{\revise Algorithms for solving~\eqref{eq:WPI} require an efficient evaluation oracle for each $\bar f_k$. Such an oracle can be constructed from an evaluation oracle for $\check f_k$, together with a preorder representation of $\check X$, as detailed in Appendix~\ref{proof:sep_intersecting}.
}
\subsection{Explicit Sublinear Envelope when One of Two Submodular Functions is Modular}\label{sec:inter}
{\revise
In this subsection, we consider the special case in which one of the two submodular functions is modular. Although the combinatorial algorithm developed in the previous subsection can be used to separate the corresponding envelope, we obtain a sharper characterization of the sublinear envelope by deriving its explicit closed-form expression. Our approach exploits a hidden intersecting family structure that arises in this class of functions. More specifically, Proposition~\ref{prop:same_envelope} reduces the problem to computing the sublinear envelope of an intersecting submodular function, which is addressed in Proposition~\ref{prop:intersecing-facet-1}. Theorem~\ref{theo:facet-1} then combines these two results to establish an explicit characterization of the sublinear envelope for this minimum.
}

Suppose that  $f_1(x) = a^\intercal x$ is a modular function defined on $\{0,1\}^n$ for some $a \in \R^n$. Let $L \subseteq \{0,1\}^n$ be a lattice family, and let $f_2:\{0,1\}^n \to \R\cup\{+\infty\}$ be such that
\begin{equation}\label{eq:condonf2}
     f_2(x) \text{ is submodular on } L \quad \text{ and } f_2(x) = + \infty \for x \notin L,  
\end{equation}
 and define $f(x) = \min \bigl\{f_1(x),f_2(x)\bigr\}$ for $x \in \{0,1\}^n$. We first enlarge the domain $L$ by adding all unit vectors. Let
 $X:=L \cup \bigl\{ e_i \mid i \in [n] \bigr\}$, and we define a function $g:X \to \R$ as follows:
\begin{equation}\label{eq:inter}
\begin{aligned}
g(x) =
\begin{cases}
\min \bigl\{ f_1(x), f_2(x) \bigr\} & \text{if } x \in \bigl\{ e_i \mid i \in [n] \bigr\}, \\
f_2(x) & \text{otherwise}.
\end{cases}
\end{aligned}
\end{equation}
The following proposition shows that this construction produces an intersecting submodular function without changing the sublinear envelope. 

\begin{proposition}\label{prop:same_envelope}
Consider a function $f(x) = \min\{f_1(x), f_2(x)\}$, where $f_1(x) = a^\intercal x$ for $x \in \{0,1\}^n$, and $f_2:\{0,1\}^n \to \R\cup\{+\infty\}$ satisfying conditions in~\eqref{eq:condonf2}. Then, the extension function $g$ given by~\eqref{eq:inter} is an intersecting submodular function and
\[
\sconv(f)(x) = \sconv(g)(x) \quad \for \,  x \in \R_+^n.
\]
\end{proposition}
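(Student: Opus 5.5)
Two claims need proof: that $g$ is intersecting submodular on $X$, and that $\sconv(f)=\sconv(g)$ on $\R^n_+$. For the second I will use the dual (conic combination) representation of Lemma~\ref{lemma:dualsconv}, or equivalently compare the sets of linear underestimators $\EP_f$ and $\EP_g$, since both envelopes equal their closures here.

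\textbf{Intersecting submodularity.} Take $x,y\in X$ with $x\wedge y\neq \mathbf{0}$. I split into cases.
\begin{itemize}
\item If both lie in $L$, then $x\vee y$ and $x\wedge y$ lie in $L$, hence in $X$. On $L\setminus\{e_i\}$ we have $g=f_2$. At unit vectors in $L$ we have $g(e_i)=\min\{a_i,f_2(e_i)\}\le f_2(e_i)$, so $g\le f_2$ on $L$.
\item If one of them, say $x=e_i$, is a unit vector, then $x\wedge y\neq\mathbf{0}$ forces $y_i=1$. So $x\wedge y=e_i$ and $x\vee y=y$, and the inequality $g(x)+g(y)\ge g(x\vee y)+g(x\wedge y)$ holds with equality. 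Both sets also belong to $X$.
\end{itemize}
The delicate case is when $x,y\in L$ and $x\wedge y=e_i$ with $e_i\in L$. Then $g(e_i)$ may be smaller than $f_2(e_i)$. This only helps, because $g(x\wedge y)$ appears on the right-hand side. I still have to check that $g(x)=f_2(x)$ whenever $x$ is not a unit vector, and handle the subcase where $x$ or $y$ is itself a unit vector in $L$; that subcase is already covered by the unit-vector case. With these checks, $g(x)+g(y)=f_2(x)+f_2(y)\ge f_2(x\vee y)+f_2(x\wedge y)\ge g(x\vee y)+g(x\wedge y)$.

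\textbf{Equality of envelopes.} One direction is immediate. For every $x\in X$ we have $g(x)\ge f(x)$, with equality at the unit vectors. At any other point of $L$, $g(x)=f_2(x)\ge\min\{f_1,f_2\}(x)$. Hence $\EP_g\supseteq\EP_f\cap\{\alpha\mid\langle\alpha,v\rangle\le f(v),\ v\notin X\}$, and more simply every linear underestimator of $f$ on $\{0,1\}^n$ restricts to one of $g$ on $X$. Points outside $X$ only add constraints. So $\sconv(f)\le\sconv(g)$.

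For the reverse, I take $\alpha$ with $\langle\alpha,v\rangle\le g(v)$ for all $v\in X$ and show $\langle\alpha,v\rangle\le f(v)$ for all $v\in\{0,1\}^n$.
\begin{itemize}
\item From the unit vectors, $\alpha_i\le g(e_i)\le a_i$. Hence $\langle\alpha,v\rangle\le a^\intercal v=f_1(v)$ for every binary $v$, because $v\ge 0$.
\item For $v\in L$ that is not a unit vector, $\langle\alpha,v\rangle\le f_2(v)$ by the definition of $g$.
\item Outside $L$, $f_2=+\infty$.
\item At $\mathbf{0}$: if $\mathbf{0}\in L$ then $0\le f_2(\mathbf{0})$ holds via $g$, and if not then $f(\mathbf{0})=f_1(\mathbf{0})=0$.
\end{itemize}
Therefore $\langle\alpha,v\rangle\le\min\{f_1,f_2\}(v)=f(v)$ everywhere, giving $\EP_g\subseteq\EP_f$.

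By the primal representation, the two envelopes then coincide on $\cone$ of the domains. Since $\cone(X)=\R^n_+=\cone(\{0,1\}^n)$ and Assumption~\ref{ass:proper} holds for both (take $\alpha=\min(a,\cdot)$ scaled suitably, or note that the LPs are feasible), we get $\sconv(f)=\sconv(g)$ on $\R^n_+$.

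I expect the main obstacle to be the case analysis in the intersecting-submodularity step, in particular making sure that $x\vee y$ and $x\wedge y$ land in $X$ when a unit vector not in $L$ meets an element of $L$. The unit-vector case argued above resolves this.
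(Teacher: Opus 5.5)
Your proposal is correct and follows essentially the same route as the paper. It uses the same case split: when one vector is a unit vector, $x\wedge y=e_i$ and $x\vee y=y$; when both lie in $L$, $g=f_2$ on the left side and $g\le f_2$ on the right. It also identifies the two sets of linear underestimators the same way, by reducing $\langle\alpha,v\rangle\le a^\intercal v$ on $\{0,1\}^n$ to $\alpha_i\le a_i$.

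The one check you skip is in the inclusion $\EP_g\subseteq\EP_f$: for $e_i\in L$ you also need $\alpha_i\le f_2(e_i)$, and your second bullet explicitly excludes unit vectors. This follows at once from $\alpha_i\le g(e_i)=\min\{a_i,f_2(e_i)\}$.
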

\begin{proof}
We first show that $g$ is an intersecting submodular function over $X:=L \cup \bigl\{ e_i \mid i \in [n] \bigr\}$. Let $E = \{e_i \mid i \in [n]\}$. Consider any $u, v \in X$ such that $u \wedge v \neq \mathbf{0}$. We distinguish two cases. First, we assume that at least one of $u$ or $v$ belongs to $E$, and, without loss of generality, we can further assume that $u = e_i$ for some $i \in [n]$. Since $u \wedge v \neq \mathbf{0}$, the $i^{\text{th}}$ coordinate of $v$ must be equal to $1$, implying that $ v \geq e_i$. Consequently, $u \wedge v = e_i = u \in X$ and $u \vee v = v \in X$. Thus, the submodularity inequality holds trivially with equality. If neither $u$ nor $v$ belongs to $E$, then $u, v \in X \setminus E \subseteq L$. Since $L$ is a lattice family, it is closed under both join and meet. Therefore, $u \wedge v \in L \subseteq X$ and $u \vee v \in L \subseteq X$, and we have
\[
g(u) + g(v)
= f_2(u) + f_2(v)
\ge f_2(u \vee v) + f_2(u \wedge v)
\ge g(u \vee v) + g(u \wedge v),
\]
where the equality holds by the definition of $g$ in~\eqref{eq:inter}, and the inequality holds since $f_2$ is submodular over the lattice family $X$, and the last inequality follows from the fact that $g(x) \le f_2(x)$ for $ x \in X$.

We now prove the equivalence $\sconv(f)(x) = \sconv(g)(x)$ for $x\in \R_+^n$. By Lemma~\ref{lemma:dualsconv}, the sublinear  envelope $\sconv(f)$ admits the representation
\[
\sconv(f)(x) = \max_{\alpha}
\left\{
\alpha^\intercal x
\:\middle|\:
\begin{aligned}
    &\alpha^\intercal v \le a^\intercal v
     \quad\for v \in \{0,1\}^n \\
    &\alpha^\intercal v \le f_2(v)
    \quad \for v \in L
\end{aligned}
\right\}.
\]
Clearly, the constraints $\alpha^\intercal v \le a^\intercal v $ for $ v \in \{0,1\}^n,$
are equivalent to the coordinate-wise inequalities $\alpha_i \le a_i$ for $i \in [n]$. Indeed, if $\alpha^\intercal v \le a^\intercal v $ for $ v \in \{0,1\}^n$, then by choosing $v = e_i$ for each $i \in [n]$, we immediately obtain $\alpha_i \le a_i$. Conversely, suppose that $\alpha_i \le a_i$ for $ i \in [n]$. For any $v \in \{0,1\}^n$, we have
\[
\alpha^\intercal v
=
\sum_{i \in [n]:\, v_i = 1} \alpha_i
\le
\sum_{i \in [n]:\, v_i = 1} a_i
=
a^\intercal v,
\]
where the inequality follows that $\alpha_i\leq a_i$ for $i\in [n]$.  Therefore, for every $x \in \R_+^n$,
\[
\begin{aligned}
\sconv(f)(x)
&= \max_{\alpha}
\left\{
\alpha^\intercal x
\:\middle|\:
\begin{aligned}
    &\alpha^\intercal e_i \le a^\intercal e_i 
     \for i \in [n] \\
    &\alpha^\intercal v \le f_2(v) 
     \for v \in L
\end{aligned}
\right\}
\\
&= \max_{\alpha}
\left\{
\alpha^\intercal x
\:\middle|\:
\alpha^\intercal v \le g(v) 
\for v \in X
\right\}
\\
&= \sconv(g)(x),
\end{aligned}
\]
where the second equality follows directly from the definition of $g$, and the last equality follows from Lemma~\ref{lemma:dualsconv} together with the fact that $\cone(X) = \R_+^n$. This completes the proof. \qed
\end{proof}

As a consequence, it suffices to derive the sublinear envelope of the intersecting submodular function $g$ in~\eqref{eq:inter}. We next develop a general result for intersecting submodular functions. Let $g:X \to\R$ be an intersecting submodular function, where  $X$ is an intersecting family containing $\{e_i\mid i\in[n]\}$. Consider its Dilworth truncation $\check{g}: \check{X} \to \R$, as defined in~\eqref{eq:dilworth-truncation}.  
By Proposition~\ref{prop:lattice-extension-1} in Appendix~\ref{App:lat_extension},  the truncation  preserves the sublinear envelope of $g$.  Moreover, by Theorem 49.4 of~\cite{schrijver2003combinatorial}, it is a submodular function on $\check{X}$, which is $\{0,1\}^n$ since $e_i \in X$ for $i\in [n]$. These two properties allow us to derive the sublinear envelope of $g$ via a sorting-based greedy algorithm. 

 For a given $\bar{x} \in \cone(\check{X}) = \R^n_+$, let $\sigma$ be a permutation  of coordinates of $\bar{x}$ such that   $\bar{x}_{\sigma(1)} \geq \bar{x}_{\sigma(2)} \geq \cdots \geq \bar{x}_{\sigma(n)}$. In addition, let $\alpha^*$ be a vector in $\R^n$ such that 
\begin{equation}\label{eq:kuhnDual}
\alpha^*_{\sigma(i)} = \check{g}(v^\sigma_i) - \check{g}(v^\sigma_{i-1}) \qquad \for i \in [n], 
\end{equation}
where $v^\sigma_0 = \mathbf{0}$ and $v^\sigma_i = v^\sigma_{i-1}+e_{\sigma(i)}$ for $i \in [n]$. It turns out that  the sublinear envelope of $g$ at a given point
$\bar{x}\in\cone(X) = \R^n_+$ can be evaluated by the greedy procedure, that is, 
\begin{equation}\label{eq:intersecting-greedy}
\sconv(g)(\bar{x}) = \sconv(\check{g})(\bar{x}) = \max \bigl\{\langle \alpha,\bar{x}\rangle \bigm| \alpha \in \EP_{\check{g}} \bigr\} = \langle \alpha^*, \bar{x}\rangle,
\end{equation}
where the first equality follows from Proposition~\ref{prop:lattice-extension-1}, the second equality follows from Lemma~\ref{lemma:dualsconv},  and the third equality holds since the submodularity of $\check{g}$ implies that $\alpha^*$ is optimal~\cite{edmonds1970submodular}. Computing $\alpha^*$ requires $n$ evaluations of the value oracle for $\check{g}$. Each
evaluation of $\check{g}$ can, in turn, be obtained by solving at most $n$ lattice family submodular minimization problems~\cite[Section 49.7]{schrijver2003combinatorial}. Hence, the
sublinear envelope of $g$ can be computed
through a sequence of at most $n^2$ lattice family submodular minimization problems.

{\revise
Next, Proposition~\ref{prop:intersecing-facet-1} shows that $\alpha^*$, defined in \eqref{eq:kuhnDual}, can be computed without explicitly evaluating $\check{g}$. Instead, $n$ submodular minimization problems suffice to compute $\alpha^*$. This will be apparent when we show that $\mu\in\R^n$ defined below equates $\alpha^*$. To begin, we compute $\mu$ recursively in the order specified by $\sigma$ by specifying the $n$ problems claimed above. We start with $\beta^0=\mathbf{0}$. For each $t\in[n]$, we compute the $\sigma(t)^{\text{th}}$ coordinate of $\mu$ by solving the following lattice family submodular minimization problem}
\begin{equation}\label{eq:intersecting-facet}
\mu_{\sigma(t)}: = \min_{y\in\{0,1\}^n}
\Bigl\{ g\bigl(e_{\sigma(t)}+y\bigr) -   \langle \beta^{t-1},  y \rangle \Bigm| y\leq v^\sigma_{t-1},\ e_{\sigma(t)}+y\in X
\Bigr\},
\end{equation}
and then update $\beta^t$ as follows:
\[ 
\beta^t:=\beta^{t-1}+\mu_{\sigma(t)}e_{\sigma(t)}. 
\] 
{\revise The feasible region of~\eqref{eq:intersecting-facet} restricts $X$ to the $\sigma(t)^{\text{th}}$ coordinate slice and imposes an additional upper bound on the remaining coordinates with the constraint $y\leq v^\sigma_{t-1}$. The objective linearly perturbs $g$ using coordinates of $\mu$ computed in the earlier iterations, denoted as $\beta^{t-1}$ to track iteration order. This minimization problem appears in the construction of a value oracle for $\check{g}$; see~\cite[Equation~49.50]{schrijver2003combinatorial} and we show that this provides the needed shortcut to computing $\alpha^*$. We now formally establish that $\mu$ coincides with $\alpha^*$.}

\begin{proposition}\label{prop:intersecing-facet-1}
Consider an intersecting submodular function $g:X\rightarrow \R$, where $X$ is an intersecting family containing $\{e_i \mid i \in [n]\}$.   For a given vector $\bar{x}\in\cone(X)=\R^n_+$, let $\sigma$ be a permutation of $[n]$ sorting $\bar{x}$ into non-increasing order. Then, $\sconv(g)(\bar{x}) =\langle \mu,  \bar{x} \rangle$, where $\mu$ can be computed by solving $n$ lattice family submodular minimization problems given in~\eqref{eq:intersecting-facet}. 
\end{proposition}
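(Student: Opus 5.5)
By \eqref{eq:intersecting-greedy}, it suffices to prove that $\mu=\alpha^*$, i.e.\ $\mu_{\sigma(t)}=\check g(v^\sigma_t)-\check g(v^\sigma_{t-1})$ for every $t\in[n]$. Equivalently, by telescoping, $\langle\beta^t,\mathbf{1}\rangle$ restricted to $v^\sigma_t$ equals $\check g(v^\sigma_t)$. I will prove this by induction on $t$, with hypothesis $\beta^{t-1}=\alpha^*$ on the coordinates $\sigma(1),\dots,\sigma(t-1)$ and zero elsewhere. The base case $t=1$ reduces to $\mu_{\sigma(1)}=g(e_{\sigma(1)})=\check g(e_{\sigma(1)})$. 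This holds because any partition of a unit vector into members of $X$ is that vector alone, since $\mathbf 0$ contributes nothing to the truncation.

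For the inductive step, fix $t$ and write $i=\sigma(t)$ and $w=v^\sigma_{t-1}$. The key identity I will establish is
\[
\check g(w+e_i)=\min_{y\le w,\ e_i+y\in X}\Bigl\{g(e_i+y)+\check g(w-y)\Bigr\}.
\]
The inequality ``$\le$'' holds because concatenating $\{e_i+y\}$ with an optimal partition of $w-y$ gives a feasible partition of $w+e_i$. For ``$\ge$'', take an optimal partition of $w+e_i$; exactly one block contains coordinate $i$, and that block has the form $e_i+y$ with $y\le w$. The remaining blocks partition $w-y$, so their total cost is at least $\check g(w-y)$. This is the decomposition used in \cite[Section 49.7]{schrijver2003combinatorial}.

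Next I replace $\check g(w-y)$ by a linear quantity. By the induction hypothesis, the greedy vector $\beta^{t-1}$ is exactly the greedy solution for $\check g$ on the chain $v^\sigma_0\le\dots\le v^\sigma_{t-1}$, so $\langle\beta^{t-1},w\rangle=\check g(w)$. Since $\beta^{t-1}$ extends to the greedy vertex $\alpha^*$ of $\EP_{\check g}$ (using submodularity of $\check g$ on $\check X=\{0,1\}^n$, via Theorem 49.4 of \cite{schrijver2003combinatorial}), we also have $\langle\beta^{t-1},z\rangle\le\check g(z)$ for all $z\le w$. Hence
\[
\check g(w-y)\ \ge\ \langle\beta^{t-1},w-y\rangle=\check g(w)-\langle\beta^{t-1},y\rangle,
\]
and substituting into the identity gives $\check g(w+e_i)-\check g(w)\ge\mu_i$.

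For the reverse inequality, I would choose $y=0$ as a first attempt, but that only gives $\mu_i\le g(e_i)$, which is not enough. The correct route is the polymatroid exchange argument. The vector $\alpha^*$ lies in $\EP_{\check g}$, and its restriction to $w+e_i$ is tight. Let $y^*$ be a minimizer of \eqref{eq:intersecting-facet}, and consider $z=w-y^*$. I would show that the set $w-y^*$ is $\alpha^*$-tight, meaning $\langle\alpha^*,w-y^*\rangle=\check g(w-y^*)$, by the following argument. The tight sets of a vector in $\EP_{\check g}$ form a lattice, and both $w$ and $w+e_i$ are tight for $\alpha^*$. Consider an optimal partition of $w+e_i$ whose block containing $i$ is $e_i+y'$. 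The remaining blocks give a partition of $w-y'$ achieving $\check g(w-y')$, and the sum of block inequalities for $\alpha^*$ must then be tight on every block. This yields $\langle\alpha^*,w-y'\rangle=\check g(w-y')$ and $\alpha^*(e_i+y')=g(e_i+y')$. Plugging in gives $\alpha^*_i=g(e_i+y')-\langle\beta^{t-1},y'\rangle\ge\mu_i$, which closes the induction.

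This tightness step is the main obstacle. I expect to handle it by combining the sum of the valid inequalities $\langle\alpha^*,B\rangle\le g(B)$ over the blocks $B$ of an optimal partition of $w+e_i$ with the equality $\langle\alpha^*,w+e_i\rangle=\check g(w+e_i)$, which forces each block to be tight.

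Finally, each of the $n$ problems \eqref{eq:intersecting-facet} minimizes $g$ plus a linear term over $\{y\le w,\ e_i+y\in X\}$. This set is a lattice family because $X$ is intersecting and all its feasible points share coordinate $i$. Therefore each problem is a lattice family submodular minimization problem, which gives the stated complexity.
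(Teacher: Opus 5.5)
\textbf{Gap: the inequality $\alpha^*_i\le\mu_i$ is never proved.} Your argument establishes the same direction twice, so the induction does not close.

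Your first argument combines the identity $\check g(w+e_i)=\min_y\{g(e_i+y)+\check g(w-y)\}$ with $\check g(w-y)\ge\check g(w)-\langle\beta^{t-1},y\rangle$. This yields $\check g(w+e_i)-\check g(w)\ge\mu_i$, i.e.\ $\alpha^*_i\ge\mu_i$, and it is correct. The paragraph you announce as the reverse inequality ends with $\alpha^*_i=g(e_i+y')-\langle\beta^{t-1},y'\rangle\ge\mu_i$, which is again $\alpha^*_i\ge\mu_i$. That block-tightness argument on an optimal partition of $v^\sigma_t$ is exactly how the paper proves this direction.

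The plan you state for the reverse direction also fails for an arbitrary minimizer. You propose to show that $w-y^*$ is $\alpha^*$-tight for a minimizer $y^*$ of \eqref{eq:intersecting-facet} and then use the ``$\le$'' half of the identity. Take $X=\{0,1\}^3$, $g(\mathbf{0})=0$, $g(e_j)=2$ for all $j$, $g(e_1+e_2)=3$, $g(e_1+e_3)=g(e_2+e_3)=4$, $g(\mathbf{1})=5$. This $g$ is submodular, so $\check g=g$; let $\sigma$ be the identity. Then $\alpha^*=(2,1,2)$, and at $t=3$ the vector $y^*=e_1$ minimizes \eqref{eq:intersecting-facet} with value $2$. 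However, $w-y^*=e_2$ satisfies $\langle\alpha^*,e_2\rangle=1<2=\check g(e_2)$. The bound you would get, $\check g(\mathbf{1})\le g(e_1+e_3)+\check g(e_2)=6$, is not $\check g(w)+\mu_3=5$.

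\textbf{The fix.} The missing direction is the easy one and needs no exchange argument. Take any feasible $y$, i.e.\ $y\le w$ and $e_i+y\in X$. Then
\[
\alpha^*_i+\langle\beta^{t-1},y\rangle=\langle\alpha^*,e_i+y\rangle\le\check g(e_i+y)\le g(e_i+y).
\]
This uses three facts:
\begin{itemize}
\item $\alpha^*\in\EP_{\check g}$, since it is the greedy vector of the submodular $\check g$, which you already invoke;
\item the induction hypothesis, which applies on the support of $y$ because $y\le w$;
\item $\check g\le g$ on $X$.
\end{itemize}
Minimizing over $y$ gives $\alpha^*_i\le\mu_i$.

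With this added, your identity-based paragraph is a valid alternative to the paper's block-tightness argument for $\alpha^*_i\ge\mu_i$. The rest of your proof (base case, lattice structure of the feasible sets) is fine.
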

 \begin{proof}

     First, we prove that the optimization problem in~\eqref{eq:intersecting-facet} is a submodular minimization over a lattice family. For $i \in [n]$, define
\[
Y_i = \bigl\{y \in \{0,1\}^n  \bigm| y \leq v_{i-1}^\sigma ,\ e_{\sigma(i)} + y \in X\bigr\}.
\]
Consider any $u,w \in Y_i$. We show that $Y_i$ is a lattice family. Because $u,w \leq v_{i-1}^\sigma$, both their join $u \vee w$ and meet $u \wedge w$ are bounded from above by $v_{i-1}^\sigma$. Moreover, 
\[ 
e_{\sigma(i)} + (u \vee w) = (e_{\sigma(i)} + u) \vee (e_{\sigma(i)} + w) \in X, 
\]
where the equality follows from $u_{\sigma(i)} = w_{\sigma(i)} = 0$, and the inclusion holds because $e_{\sigma(i)} + u$ and $e_{\sigma(i)} + w$ belong to $X$, $(e_{\sigma(i)} + u)\wedge (e_{\sigma(i)} + w)\geq e_{\sigma(i)} \neq \mathbf{0}$, and $X$ is an intersecting family. Similarly, we can argue that $e_{\sigma(i)} + (u \wedge w)\in X$. Therefore, $Y_i$ is a lattice family. Moreover, the submodularity of the objective function $g(e_{\sigma(i)}+y)-\langle \beta^{i-1}, y \rangle$ follows readily from the submodularity of $g$ and the modularity of the linear term.

Next, we prove by induction that $\alpha^*_{\sigma(t)} = \mu_{\sigma(t)}$ for $t \in [n]$. This, together with~\eqref{eq:intersecting-greedy}, completes the proof.   Clearly, for $t =1$, it follows from definitions that $\mu_{\sigma(t)} = g(e_{\sigma(t)}) = \check{g}(e_{\sigma(t)}) = \alpha^*_{\sigma(t)}$, where the second equality holds since $X$ contains $\{e_i \mid i \in [n]\}$. Now, consider $2\leq t\leq n$ and suppose that $\alpha^*_{\sigma(i)} = \mu_{\sigma(i)}$ for $i < t$. To show  that $\alpha^*_{\sigma(t)} \leq \mu_{\sigma(t)}$, we observe that for any $y \in Y_t$ and $x = y + e_{\sigma(t)}$, we have
     \[
     \begin{aligned}
         \alpha^*_{\sigma(t)} &\leq \check{g}(x) - \langle \alpha^*, y \rangle \leq g(x)- \langle \alpha^*, y \rangle\\
         &  = g(x)- \sum_{i <t}\alpha^{*}_{\sigma(i)}y_{\sigma(i)} = g(x)- \sum_{i <t}\mu_{\sigma(i)}y_{\sigma(i)}  = g(x) - \langle \beta^{t-1}, y \rangle,
     \end{aligned}
     \]
where the first inequality holds since $\alpha^*$ is feasible to $\EP_{\check{g}}$, the second inequality holds since  $x \in X$ and $\check{g}(v) \leq g(v)$ for $v \in X$, the first equality follows from $y \leq v^\sigma_{t-1}$, the second equality holds by induction, and the last equality holds by the definition of $\beta^{t-1}$.

For the reverse inequality, take an optimal decomposition $v^\sigma_t = \sum_{v\in V^*_t}v$ attaining  $\check{g}(v^\sigma_t)$. Now, we claim that $\langle \alpha^*, v \rangle = g(v)$ for every $v \in V^*_t$. To see this, we observe that 
\[
\sum_{v\in V_t^*}\langle\alpha^*,v\rangle =
\langle\alpha^*,v_t^\sigma\rangle =
\check{g}(v_t^\sigma) =
\sum_{v\in V_t^*}g(v),
\]
where the second equality holds by the definition of $\alpha^*$ in \eqref{eq:kuhnDual}. 
This, together with the fact $\langle\alpha^*,v\rangle\leq g(v)$ for every $v\in X$, implies $\langle\alpha^*,v\rangle=g(v)$ for all $v\in V_t^*$. The proof is complete by observing that there exists $v^* \in V^*_t$ such that $v^*_{\sigma(t)} = 1$ and, for $y^* :=v^*-e_{\sigma(t)}$, we have
\[
\begin{aligned}
    \alpha^*_{\sigma(t)} &= g(v^*) - \sum_{i \neq t}v^* \alpha^*_i = g(e_{\sigma(t)} + y^*) - \langle y^* , \alpha^* \rangle = g(e_{\sigma(t)} + y^*) - \sum_{i<t} y^*_{\sigma(i)} \alpha^*_{\sigma(i)} \\
   & = g(e_{\sigma(t)} + y^*) - \sum_{i<t} y^*_{\sigma(i)} \mu_{\sigma(i)} = g(e_{\sigma(t)} + y^*) -  \langle y^* , \beta^{t-1} \rangle \geq \mu_{\sigma(t)}, 
\end{aligned}
\]
where the third equality holds since $y^* \leq v^\sigma_{t-1}$, the fourth equality holds by induction, the last equality follows by the definition $\beta^{t-1}$, and the last inequality holds since $y^*$ is feasible to the minimization problem in~\eqref{eq:intersecting-facet}. \qed
 \end{proof}

We can now combine the two preceding results to obtain an explicit characterization of the sublinear envelope of the minimum of a modular function and a submodular function.
\begin{theorem}\label{theo:facet-1}
Consider a function $f(x)=\min\{f_1(x),f_2(x)\}$, where
$f_1(x)=a^\intercal x$ for $x\in\{0,1\}^n$, and
$f_2:\{0,1\}^n\to\R\cup\{+\infty\}$ satisfies the conditions in~\eqref{eq:condonf2}.
Let $g$ be defined in~\eqref{eq:inter}. For a given vector
$\bar{x}\in\R^n_+$, let $\sigma$ be a permutation sorting $\bar{x}$ in
non-increasing order. Then, $\sconv(f)(\bar{x})=\langle\mu,\bar{x}\rangle$ where $\mu$ can be computed by solving $n$ lattice family submodular
minimization problems given in~\eqref{eq:intersecting-facet}.
\end{theorem}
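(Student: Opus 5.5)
The plan is to chain Proposition~\ref{prop:same_envelope} with Proposition~\ref{prop:intersecing-facet-1}; little new argument should be needed beyond checking that the hypotheses match.

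First, Proposition~\ref{prop:same_envelope} applies directly to $f=\min\{a^\intercal x, f_2\}$ with $f_2$ satisfying~\eqref{eq:condonf2}. It shows two things. The function $g$ in~\eqref{eq:inter} is intersecting submodular on $X=L\cup\{e_i\mid i\in[n]\}$. Moreover, $\sconv(f)(\bar x)=\sconv(g)(\bar x)$ for every $\bar x\in\R^n_+$.

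Second, I will verify the hypotheses of Proposition~\ref{prop:intersecing-facet-1} for $g$. By construction $X$ contains all unit vectors, so $\cone(X)=\R^n_+$ and $\bar x$ lies in the cone. The function $g$ is real-valued on $X$: on $L$ it equals $f_2$, which is finite there, and on each $e_i$ it equals $\min\{a_i,f_2(e_i)\}$, which is finite because $a_i\in\R$. One point needs care, namely whether Assumption~\ref{ass:proper} holds so that the envelope is proper. I expect this to be harmless. If $\mathbf 0\in L$, the sublinear envelope over $\R^n_+$ already requires $f_2(\mathbf 0)\ge 0$ for $\sconv(f)$ to be finite, and the constraint at $\mathbf 0$ is vacuous whenever $f_2(\mathbf 0)\ge0$. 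I will either state this as an implicit assumption or observe that the linear constraints $\alpha\le a$ coming from $f_1$ keep the LP in Lemma~\ref{lemma:dualsconv} bounded. Feasibility then follows because the greedy vector $\alpha^*$ from~\eqref{eq:kuhnDual} is feasible.

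Third, applying Proposition~\ref{prop:intersecing-facet-1} with the permutation $\sigma$ sorting $\bar x$ gives $\sconv(g)(\bar x)=\langle\mu,\bar x\rangle$, where $\mu$ is obtained from the $n$ lattice-family submodular minimizations~\eqref{eq:intersecting-facet}. Each of these is taken over the lattice $Y_t$, with $g$ in place of $f_2$ on $L$ and on the unit vectors. Combining this with the first step yields $\sconv(f)(\bar x)=\langle\mu,\bar x\rangle$.

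The main obstacle is the bookkeeping in the second step. Specifically, I need to make sure the Dilworth-truncation argument behind Proposition~\ref{prop:intersecing-facet-1} (through Proposition~\ref{prop:lattice-extension-1}) is legitimate for $g$, including the case $\mathbf 0\in L$ where $g(\mathbf 0)=f_2(\mathbf 0)$ might enter. I will handle this by noting that $\mathbf 0$ contributes only the vacuous constraint $0\le g(\mathbf 0)$ in the primal representation, so it can be dropped from $X$ without changing either the envelope or the intersecting structure.
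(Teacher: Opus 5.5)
Your proposal is correct and matches the paper's proof, which is exactly the chain $\sconv(f)(\bar x)=\sconv(g)(\bar x)=\langle\mu,\bar x\rangle$: Proposition~\ref{prop:same_envelope} gives the first equality and Proposition~\ref{prop:intersecing-facet-1} the second. Your extra bookkeeping is also sound and useful. When $\mathbf 0\in L$, the result genuinely needs $f_2(\mathbf 0)\ge 0$; otherwise $f(\mathbf 0)<0$ and $\sconv(f)$ is not finite, although the recursion still returns a finite $\mu$. The paper leaves this hypothesis implicit through Assumption~\ref{ass:proper} and Proposition~\ref{prop:lattice-extension-1}.
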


\begin{proof}
The result follows since $\sconv(f)(\bar{x})=\sconv(g)(\bar{x}) = \langle \mu, \bar{x}\rangle$, where the first equality follows from Proposition~\ref{prop:same_envelope}, and the second equality follows from Proposition~\ref{prop:intersecing-facet-1}. \qed
\end{proof}

\subsection{Compact LP-based separation}\label{section:LP}

In this subsection, we consider $K$ functions $f_k : X\subseteq \{0,1\}^n \to \R$ for $k \in [K]$, and their pointwise minimum $f(x) = \min \bigl\{ f_1(x), f_2(x), \ldots, f_K(x) \bigr\}$. Even when $f_1,\ldots,f_K$ are submodular functions, the separation problem for $\sconv(f)$ is equivalent to maximization over the intersection of $K$ polymatroids, for which no strongly polynomial-time algorithm is known. Therefore, we investigate whether a compact linear programming formulation exists when each $\conv(\epi(f_k))$ admits an individual compact formulation for $k \in [K]$.

Assume that the convex hull of the epigraph of $f_k$ admits a compact polyhedral formulation, that is,
\begin{equation}\label{eq:conv-epi-fk}
    \conv\bigl(\epi(f_k)\bigr) 
= \bigl\{ (\mu_k,x)  \bigm| \exists y_k \in \R^{m_k} \text{ s.t. }
d_k \le D_k \mu_k  + A_k x + B_k y_k 
\bigr\},
\end{equation}
where $A_k$, $B_k$, $D_k$ and $d_k$ are matrices or vectors of appropriate dimensions and $D_k\geq 0$ for $k\in [K]$. Given that each epigraph admits a compact extended formulation, we will show that the following compact linear programming formulation can be used to separate the sublinear envelope of $f$ at a given point $\bar{x} \in \cone(X)$.
\begin{equation}\label{eq:LP-sublinear}
    \max_{\alpha,(\lambda_k)_{k\in [K]}} \Bigl\{ \alpha^\intercal \bar{x}  \Bigm| d^\intercal_k \lambda_k \geq 0,\  \lambda_k \geq 0,\ D_k^\intercal \lambda_k =1,\ A_k^\intercal \lambda_k = -\alpha ,\ B^\intercal_k \lambda_k = 0\,  \for  k \in [K] \Bigr\}.
\end{equation}

\begin{proposition}\label{prop:LP_sub}
Consider functions $f_k:X\subseteq \{0,1\}^n \to \R$ for $k \in [K]$ and $f(x) = \min\{f_1(x), \ldots, f_K(x)\}$. Suppose that the convex hull of $\epi(f_k)$ is given by~\eqref{eq:conv-epi-fk} for each $k \in [K]$. Then, the separation problem for $\sconv(f)$ can be solved via the compact LP~\eqref{eq:LP-sublinear}.
\end{proposition}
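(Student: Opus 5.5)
The approach is to use the primal representation in Lemma~\ref{lemma:dualsconv} and dualize each function's compact formulation by LP duality. By that lemma, and since $X\subseteq\{0,1\}^n$ is finite, for $\bar x\in\cone(X)$ we have
\[
\sconv(f)(\bar x)=\max\bigl\{\alpha^\intercal\bar x \bigm| \alpha^\intercal v\le f(v)\ \for v\in X\bigr\}.
\]
We assume $f(\mathbf 0)\ge 0$ when $\mathbf 0\in X$, so that Assumption~\ref{ass:proper} holds. Because $f=\min_k f_k$, the constraint $\alpha^\intercal v\le f(v)$ for all $v\in X$ is equivalent to requiring $\alpha^\intercal v\le f_k(v)$ for every $v\in X$ and every $k\in[K]$. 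So the feasible set is $\bigcap_{k}\EP_{f_k}$, and it suffices to describe each $\EP_{f_k}$ compactly.

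The second step is the key claim: $\alpha\in\EP_{f_k}$ if and only if there is $\lambda_k$ satisfying the $k$-th block of constraints in~\eqref{eq:LP-sublinear}. The linear function $\alpha$ underestimates $f_k$ on $X$ exactly when $\mu_k-\alpha^\intercal x\ge 0$ for all $(\mu_k,x)\in\epi(f_k)$. This is equivalent to the same inequality on $\conv(\epi(f_k))$, and hence to
\[
\min\bigl\{\mu_k-\alpha^\intercal x \bigm| d_k\le D_k\mu_k+A_kx+B_ky_k\bigr\}\ge 0.
\]
Call this minimum (P$_k$); it is taken over the free variables $(\mu_k,x,y_k)$.

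Its LP dual is $\max\{d_k^\intercal\lambda_k \mid \lambda_k\ge 0,\ D_k^\intercal\lambda_k=1,\ A_k^\intercal\lambda_k=-\alpha,\ B_k^\intercal\lambda_k=0\}$. The primal (P$_k$) is feasible because $\epi(f_k)\neq\emptyset$. If the dual has a solution with $d_k^\intercal\lambda_k\ge 0$, weak duality gives primal value $\ge 0$. Conversely, if the primal value is $\ge 0$, it is finite, so strong duality yields an optimal $\lambda_k$ with $d_k^\intercal\lambda_k\ge 0$. Intersecting over $k$, the projection of the feasible region of~\eqref{eq:LP-sublinear} onto $\alpha$ is exactly $\bigcap_k\EP_{f_k}$. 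Hence the optimal value of~\eqref{eq:LP-sublinear} equals $\sconv(f)(\bar x)$, which is finite by Assumption~\ref{ass:proper} and the finiteness of $X$.

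Separation then proceeds as in Proposition~\ref{prop:sep_sublinear}. Given $(\bar x,\bar\mu)$, we solve~\eqref{eq:LP-sublinear}. If $\bar\mu$ is at least the optimal value, the point lies in the epigraph. Otherwise the optimal $\alpha^*$ gives a valid inequality $\mu\ge\langle\alpha^*,x\rangle$ that is violated by the point. The LP has polynomial size whenever each formulation~\eqref{eq:conv-epi-fk} does.

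I expect the main obstacle to be the equivalence between $\mu_k-\alpha^\intercal x\ge 0$ on $\conv(\epi(f_k))$ and the dual system. Two points need care: the objective's coefficient on $\mu_k$ is $1$, which forces $D_k^\intercal\lambda_k=1$, and we must confirm that (P$_k$) is bounded, not just feasible, before invoking strong duality. The hypothesis $D_k\ge 0$ matters because the set in~\eqref{eq:conv-epi-fk} must be closed upward in $\mu_k$, as an epigraph is; it also makes the normalization $D_k^\intercal\lambda_k=1$ consistent with the primal problem.
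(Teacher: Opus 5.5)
Your proposal is correct and follows essentially the same route as the paper. Both rewrite the separation problem as maximizing $\alpha^\intercal\bar x$ subject to $\min_{v\in X}\{f_k(v)-\alpha^\intercal v\}\ge 0$ for each $k$, express each inner minimum as an LP over the compact description of $\conv(\epi(f_k))$, and replace it by its LP dual; your weak/strong duality argument that the projection of the feasible region of~\eqref{eq:LP-sublinear} onto $\alpha$ equals $\bigcap_k \EP_{f_k}$ is a more careful version of the paper's substitution step (the hypothesis $D_k\ge 0$ plays no role here; the paper uses it only in Theorem~\ref{theorem:LP}).
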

\begin{proof}
The sublinear separation problem for $\sconv(f)$ can be written as 
\[
\max \Bigl\{ \alpha^\intercal \bar{x}  \Bigm| \alpha^\intercal v \leq  f_k(v)  \for  v\in X \text{ and } k \in [K] \Bigr\}.
\] 
This is equivalent to
\begin{equation}\label{eq:sub-LP-1}
    \max \Bigl\{ \alpha^\intercal \bar{x}  \Bigm| 0 \leq \min_{v \in X}\bigl\{ f_k(v) -\alpha^\intercal v \bigr\} \for  k \in [K] \Bigr\}.
\end{equation}
Using the compact convex hull description of the epigraph of $f_k$, we obtain an LP formulation for inner minimization problems, that is, for $k \in [K]$, 
\begin{equation}\label{eq:inner_primal}
\min \left\{ \mu_k - \alpha^\intercal v \;\middle|\; d_k \leq D_k \mu_k + A_k v + B_k y_k \right\}.
\end{equation}
For any $\alpha \in \R^n$, problem~\eqref{eq:inner_primal} is feasible and has a finite optimal value, since the inner problem of~\eqref{eq:sub-LP-1} is feasible and bounded. Therefore, strong duality holds, and the dual problem can be written as
\[
\max \bigl\{ d^\intercal_k \lambda_k  \bigm| \lambda_k \geq 0,\ D_k^\intercal \lambda_k =1,\ A_k^\intercal \lambda_k = -\alpha ,\ B^\intercal_k \lambda_k = 0  \bigr\},
\]
where $\lambda_k$ is the dual variable. Replacing the inner minimization problems with the dual LP problems, we obtain the LP formulation given in~\eqref{eq:LP-sublinear}.   
\end{proof}

Therefore, if each $\conv(\epi(f_k))$ admits a compact linear programming formulation, then the separation problem for $\sconv(f)$ also admits a compact LP formulation. We now construct a compact LP formulation of the convex envelope of $f$ via our lifting approach. As shown in Theorem~\ref{theorem:LP}, we can obtain the following compact LP formulation for separating the convex envelope of $f$ at a given point $\bar{x} \in \conv(X)$:
\begin{equation}\label{eq:LP-convex}
    \max_{\alpha_0,\alpha,(\lambda_k)_{k\in [K]}}
    \left\{
        \alpha_0+\alpha^\intercal \bar{x}
        \;\middle|\;
        d_k^\intercal \lambda_k \geq \alpha_0,\;
        D_k^\intercal \lambda_k = 1,\;
        A_k^\intercal \lambda_k = -\alpha,\;
        B_k^\intercal \lambda_k = 0,\;
        \lambda_k\geq 0
        \ \for k\in [K]
    \right\}.
\end{equation}

\begin{theorem}\label{theorem:LP}
Consider functions $f_k:X\subseteq \{0,1\}^n \to \R$ for $k \in [K]$ and $f(x) = \min\{f_1(x), \ldots, f_K(x)\}$. Suppose that the convex hull of $\epi(f_k)$ is given by~\eqref{eq:conv-epi-fk} for each $k \in [K]$. Then, the separation problem of $\conv(f)$ can be solved via the compact LP~\eqref{eq:LP-convex}. 
\end{theorem}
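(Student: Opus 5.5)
The plan is to mirror the proof of Proposition~\ref{prop:LP_sub}, working directly with affine rather than linear underestimators. This amounts to applying that argument to the lift $F$ of Proposition~\ref{prop:pers-lift}. By~\eqref{eq:envdual}, $\conv(f)$ is the pointwise supremum of affine underestimators of $f$ on $X$. Since $X$ is finite, $\conv(f)$ is polyhedral, so the supremum is attained. Hence for $\bar x\in\conv(X)$,
\[
\conv(f)(\bar x)=\max_{\alpha_0,\alpha}\Bigl\{\alpha_0+\alpha^\intercal\bar x \Bigm| \alpha_0\le \min_{v\in X}\bigl\{f_k(v)-\alpha^\intercal v\bigr\}\ \for k\in[K]\Bigr\}.
\]
This uses the fact that $\alpha_0+\alpha^\intercal v\le f(v)=\min_k f_k(v)$ holds if and only if it holds against every $f_k$. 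Equivalently, the same problem is $\sconv(F)(1,\bar x)$ via Lemma~\ref{lemma:dualsconv}, where the constraint at $(0,\mathbf 0)$ is vacuous and the constraint at $(1,v)$ reads $\alpha_0+\alpha^\intercal v\le f(v)$.

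Next I will replace each inner minimum with an LP. Minimizing the linear function $\mu_k-\alpha^\intercal v$ over $\epi(f_k)$ restricted to $X$ gives the same value as minimizing it over $\conv(\epi(f_k))$. The reason is that the epigraph is the finite point set shifted by the ray $(1,\mathbf 0)$, the objective has coefficient $+1$ on $\mu_k$ so it is bounded below along that ray, and a linear function's minimum over a convex hull plus recession cone is attained at a generator. By~\eqref{eq:conv-epi-fk}, the inner problem therefore becomes exactly the primal LP~\eqref{eq:inner_primal}. This LP is feasible and bounded for every $\alpha$, so LP strong duality gives
\[
\min_{v\in X}\{f_k(v)-\alpha^\intercal v\}=\max\bigl\{d_k^\intercal\lambda_k\bigm| \lambda_k\ge0,\ D_k^\intercal\lambda_k=1,\ A_k^\intercal\lambda_k=-\alpha,\ B_k^\intercal\lambda_k=0\bigr\}.
\]

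Substituting this dual, the constraint $\alpha_0\le\max\{\cdots\}$ holds if and only if some dual-feasible $\lambda_k$ has $d_k^\intercal\lambda_k\ge\alpha_0$. The reverse direction is immediate by weak duality. Lifting these $\lambda_k$ into the outer maximization yields precisely~\eqref{eq:LP-convex}, whose optimal value is $\conv(f)(\bar x)$.

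Separation then follows as in Proposition~\ref{prop:sep_sublinear}. If $\bar\mu$ is at least the optimal value, the point lies in the epigraph. Otherwise the optimal $(\alpha_0,\alpha)$ gives a valid affine underestimator $\alpha_0+\alpha^\intercal x\le\conv(f)(x)$ that is violated at $(\bar x,\bar\mu)$. Validity holds because any feasible $(\alpha_0,\alpha,\lambda)$ satisfies $\alpha_0\le d_k^\intercal\lambda_k\le f_k(v)-\alpha^\intercal v$ for all $v\in X$ and all $k$.

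The main obstacle is the justification that the inner minimum over the discrete epigraph equals the LP over $\conv(\epi(f_k))$, together with boundedness so that strong duality applies. I will handle both through the recession-cone argument above, noting that $D_k\ge0$ is consistent with the recession direction $(1,\mathbf 0)$. Compactness is clear, since the LP has $n+1+\sum_k(\text{number of rows of }D_k)$ variables.
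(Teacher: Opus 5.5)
Your proposal is correct, but it reaches \eqref{eq:LP-convex} by a more direct route than the paper.

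The paper works entirely in the lifted space. It forms the lifts $F_k$ on $\Lf$ and uses Proposition~\ref{prop:pers-lift} to replace $\conv(f)(\bar x)$ by $\sconv(F)(1,\bar x)$. It then builds an extended formulation of $\conv(\epi(F_k))$ by disjunctive programming on $D_0\cup D_1$. This requires checking that the recession cones agree and using $D_k\ge 0$ to project out $\mu^0,\mu^1$, which yields $\{(\mu,t,x)\mid 0\le t\le 1,\ t d_k\le D_k\mu+A_kx+B_ky_k\}$. Finally it applies Proposition~\ref{prop:LP_sub} to these lifted formulations and eliminates the multipliers $\rho_k,\sigma_k$ attached to $0\le t\le 1$.

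You instead write $\conv(f)(\bar x)$ directly as a maximum over affine underestimators $(\alpha_0,\alpha)$. You then dualize each inner problem $\min_{v\in X}\{f_k(v)-\alpha^\intercal v\}$ over the given formulation of $\conv(\epi(f_k))$ itself. The constant $\alpha_0$ plays the role of the lifted coordinate, so the constraint $d_k^\intercal\lambda_k\ge\alpha_0$ appears immediately rather than after eliminating $\rho_k,\sigma_k$.

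What your route buys:
\begin{itemize}
\item It avoids the disjunctive hull computation entirely.
\item It never actually uses the hypothesis $D_k\ge 0$. Boundedness of the inner LP comes from $\conv(\epi(f_k))$ being a finite point set plus the ray $(1,\mathbf 0)$, however it is represented. So you prove the statement under slightly weaker assumptions.
\end{itemize}

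What the paper's route buys: it shows Theorem~\ref{theorem:LP} as an instance of the lifting framework of Section~\ref{sec:from-sublinear-to-convex}. As a by-product, it also gives an explicit extended formulation of $\conv(\epi(F_k))$ for the lifted functions.
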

\begin{proof}
For each \(k\in [K]\), define the extended function
\(F_k:\Lf\to \mathbb{R}\) by
\[
    F_k(0,\mathbf{0})=0,
    \qquad
    F_k(1,x)=f_k(x)
    \quad \for x\in X .
\]
where $\Lf =\{(0,\mathbf{0})\}\cup \{(1,x) \mid x\in X\}.$
Let $F(t,x) = \min_{k\in [K]} F_k(t,x)$ for $(t,x)\in \Lf$. 
Proposition~\ref{prop:pers-lift} implies that $\conv(f)(x)=\sconv(F)(1,x)$ for $x\in \conv(X)$. Thus, separating \(\conv(f)\) at \(\bar{x}\in \conv(X)\) is equivalent to separating \(\sconv(F)\) at the lifted point \((1,\bar{x})\).

We next derive an extended formulation for $\conv(\epi(F_k))$. Since
$F_k(0,0)=0$ and $F_k(1,x)=f_k(x)$, we can write $\conv(\epi(F_k))=\conv(D_0\cup D_1),$
where $D_0=\{(\mu,t,x)\mid \mu\ge 0,\ t=0,\ x=0\},$
and $D_1=\{(\mu,t,x)\mid t=1,\ (\mu,x)\in \conv(\epi(f_k))\}$.
Because $X \subseteq \{0,1\}^n$ is bounded, the recession cones of both $D_0$ and $D_1$ are identical, consisting solely of the ray $\{(\mu, 0, \mathbf{0}) \mid \mu \ge 0\}$. Therefore, by disjunctive programming~\cite{balas1979disjunctive}, we obtain the extended formulation:
\[
\conv(\epi(F_k)) = \left\{(\mu, t, x)\:\middle |\: 
\begin{aligned}
    &\exists \mu^0, \mu^1 \in \mathbb{R},\; \lambda^0, \lambda^1 \in \mathbb{R}_+,\; x^1 \in \mathbb{R}^n,\; y^1_k \in \mathbb{R}^{m_k}  \\
    &\text{ s.t.}\mu = \mu^0+\mu^1, t = \lambda^1, x = x^1,\lambda^0+\lambda^1 = 1, \mu^0 \ge 0,\\
    &\lambda^1 d_k \leq D_k\mu^1 + A_kx^1+B_ky^1_k
\end{aligned}\right\}.
\]
Projecting out the disjunctive auxiliary variables $\mu^0, \mu^1, \lambda^0, \lambda^1$, and $x^1$ simplifies this formulation. From $\lambda^0+\lambda^1 = 1$ and $\lambda^0, \lambda^1 \ge 0$, we directly have $t = \lambda^1 \in [0,1]$. Furthermore, the condition $\mu^0 \ge 0$ implies $\mu^1 = \mu - \mu^0 \le \mu$. Since the epigraph coefficient $D_k \ge \mathbf{0}$, the inequality $\lambda^1 d_k \leq D_k\mu^1 + A_kx^1+B_ky^1_k$ holds for some $\mu^1 \le \mu$ if and only if it holds for $\mu^1 = \mu$. Substituting $\mu^1$ with $\mu$ and $x^1$ with $x$ yields
\begin{equation*}
\begin{aligned}
    \conv\bigl(\epi(F_k)\bigr)
    =
    \bigl\{
        (\mu,t,x)
        \bigm|
        &\ 0\leq t\leq 1,\;
        \exists y_k\in \mathbb{R}^{m_k}
        \text{ s.t.} \:\: t d_k \leq D_k\mu + A_kx + B_ky_k
    \bigr\}.
\end{aligned}
\end{equation*}
We can now apply Proposition~\ref{prop:LP_sub} to the extended functions
\(F_k\), \(k\in[K]\) and  obtain the following LP for separating
\(\sconv(F)\) at \((1,\bar{x})\):
\[
\max_{\alpha_0,\alpha,(\lambda_k,\rho_k,\sigma_k)_{k\in[K]}}\left\{\alpha_0+\alpha^\intercal\bar{x}\: \middle |\:\text{s.t.}
\begin{aligned}
    \quad
    & D_k^\intercal\lambda_k = 1, A_k^\intercal\lambda_k = -\alpha, B_k^\intercal\lambda_k = 0, \\
    & -d_k^\intercal\lambda_k+\rho_k-\sigma_k=-\alpha_0, -\sigma_k\geq 0, \\
    & \lambda_k\geq 0,\ \rho_k\geq 0,\ \sigma_k\geq 0,
\end{aligned}\for k\in[K]\right\}
\]
Since \(\sigma_k\geq 0\) and \(-\sigma_k\geq 0\), we have \(\sigma_k=0\). Hence, $\rho_k=d_k^\intercal\lambda_k-\alpha_0$. Eliminating the auxiliary multipliers \(\rho_k\) and \(\sigma_k\), we obtain the compact LP~\eqref{eq:LP-convex}.
\end{proof}

\section{Applications}\label{section:app}
In this section, we consider two applications of our theoretical developments. First, in Section~\ref{sec:inventory}, we consider a class of stochastic programming problems arising in inventory management. There, our characterization of the sublinear envelope for the minimum of submodular functions in Section~\ref{sec:inter} enables the design of efficient evaluation oracles, which in turn yields an alternative proof of a recent result by~\cite{punt2025multi}. 
This alternate proof highlights the connection of the new result with intersecting families, which are well-studied in combinatorial optimization, and allows for more general domains of the demand function. Second, in Section~\ref{sec:bilinear}, we specialize our results to bilinear functions, which form an important class of disjunctive submodular functions. We focus on a class of bilinear functions whose interaction structure can be characterized by cycles. In particular, we derive an explicit closed-form expression for their convex envelope in the original space. Consequently, separation for this class can be performed in $\mathcal{O}(n)$ time, rather than by using the weighted polymatroid intersection algorithm suggested by Theorem~\ref{convex-sep_of_lat} or solving the high-dimensional LP proposed in Theorem~\ref{theorem:LP}.

\subsection{Single-period multi-product newsvendor problems with demand and supply restrictions}\label{sec:inventory}
Consider a single-period multi-product newsvendor problem defined as follows
\begin{equation}
\max \Bigl\{\mathbb{E}_{d \sim \mathbb{D}}\bigl[R(q,d)\bigr] - c^\intercal q \Bigm| q \geq 0 \Bigr\},
    \tag{\textsc{Newsvendor}}
\end{equation}
where $q$ denotes an order decision on $n$ products, $d$ is a random demand vector with a distribution $\mathbb{D}$ that will be specified later, $R(q,d)$ denotes the revenue obtained from a given order quantity $q$ and a realized demand $d$, and $c$ is a cost vector. 
To determine an optimal ordering policy, a computationally tractable evaluation oracle for the revenue function is essential~\cite{punt2025multi}. When the demands and supplies of the $n$ products are mutually independent, the revenue function is separable, allowing the multi-product problem to decompose into $n$ independent, single-product newsvendor subproblems. In such settings, evaluating the expected revenue is straightforward. Conversely, when the demands and supplies exhibit cross-product dependencies, the revenue evaluation problem becomes structurally complicated and, in general, computationally challenging.

In this subsection, we focus on a particular class of revenue models in which  unsatisfied demands are lost, and both demand and supply restrictions are allowed to be general set functions. More specifically, we allow  a realized demand to be a set function $d : \{0,1\}^{n} \to \R_+$, where for $x \in \{0,1\}^n$, $d(x)$ denotes the total demand associated with the product bundle $\{i \mid x_i = 1\}$. Similarly, for an order quantity $q \geq 0$, the available supply is governed by a joint capacity function $s_q: \{0,1\}^n \to \R_+$. Given a vector of revenues $p = (p_1, \ldots, p_n)$ where  $p_i\geq 0$ is the revenue obtained from selling one unit of item $i$, the firm solves the following allocation problem to maximize the revenue,
\begin{equation}
    R(p \mid q,d) := \max \Bigl\{ p^\intercal y \Bigm| y^\intercal x \leq \min \bigl\{d(x), s_q(x) \bigr\} \for x  \in \{0,1\}^n, y \geq 0  \Bigr\}. 
\end{equation}
In practical settings, both $d$ and $s_q$ are naturally assumed to be nondecreasing set functions, since adding products to a collection cannot reduce the aggregate demand or available supply. However, to capture realistic operational environments, neither $d$ nor $s_q$ is necessarily linear. On the supply side, firms often employ resource pooling strategies to mitigate the risk of resource underutilization~\cite{jiang2023achieving}. In such settings, the supplies of different products become coupled through shared resource constraints, and under certain structural assumptions the resulting supply function can be characterized as a submodular function \cite{jiang2023achieving}. On the demand side, interactions among products may induce substitution or complementarity effects, causing the demand function to exhibit a  set function structure rather than a simple additive form \cite{bassok1999single}. Furthermore, when customers exhibit product indifference, aggregate demand can be represented by a submodular function~\cite{punt2025multi}. 

In the next, we show that the revenue function is equivalent to  the sublinear envelope of the minimum of demand and supply functions.  
\begin{proposition}\label{prop:news}
Let $d$ be a non-decreasing demand realization and for an order quantity $q \geq 0$, let $s_q$ be a non-decreasing supply function. Define $(d \wedge s_q)(x):= \min \bigl\{d(x),s_q(x)\bigr\}$ for $x \in \{0,1\}^n$. Then, 
\[
R(p \mid q,d) = \sconv(d \wedge s_q) (p) \qquad \for \, p \geq 0,
\]
which can be evaluated in strongly polynomial time if in addition $d$ and $s_q$ are submodular.  
\end{proposition}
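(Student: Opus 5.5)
The plan is to identify $R(p\mid q,d)$ with the primal (supremum) representation of the sublinear envelope in Lemma~\ref{lemma:dualsconv}, and then handle the nonnegativity constraint $y\ge 0$. Set $h:=d\wedge s_q$. Lemma~\ref{lemma:dualsconv} gives
\[
\cl\sconv(h)(p)=\sup\bigl\{\langle \alpha,p\rangle \bigm| \langle\alpha,x\rangle\le h(x)\ \for x\in\{0,1\}^n\bigr\},
\]
and the LP remark after that lemma upgrades this to $\sconv(h)(p)$. Assumption~\ref{ass:proper} holds because $h\ge 0$, so $h(\mathbf{0})\ge 0$. The only difference from $R(p\mid q,d)$ is that $R$ additionally imposes $y\ge 0$. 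Thus $R(p\mid q,d)\le \sconv(h)(p)$ is immediate.

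For the reverse inequality, I will show that for $p\ge 0$ any feasible $\alpha$ can be replaced by a nonnegative $y$ with $\langle y,p\rangle\ge\langle\alpha,p\rangle$. The construction is the truncation $y:=\alpha\vee\mathbf{0}$, i.e.\ $y_i=\max\{\alpha_i,0\}$.
\begin{itemize}
\item The objective does not decrease: $\langle y,p\rangle\ge\langle\alpha,p\rangle$ since $p\ge 0$ and $y\ge\alpha$.
\item Feasibility uses monotonicity of $h$, which is inherited because the minimum of two nondecreasing functions is nondecreasing. Given $x\in\{0,1\}^n$, let $x'\le x$ be the indicator of the coordinates where $x_i=1$ and $\alpha_i>0$. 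Then
\[
\langle y,x\rangle=\langle \alpha,x'\rangle\le h(x')\le h(x).
\]
\end{itemize}
Hence the two suprema coincide, giving $R(p\mid q,d)=\sconv(d\wedge s_q)(p)$. Both are finite, since the LP is feasible ($y=0$) and bounded. The point I expect to need the most care is exactly this feasibility check, which relies on monotonicity. Without monotonicity the equality would fail.

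For the complexity claim, when $d$ and $s_q$ are submodular I will apply Proposition~\ref{prop:sep_sublinear}. Both satisfy $f_k(\mathbf{0})\ge 0$, so evaluating $\sconv(d\wedge s_q)(p)$ at $p\ge 0$ is the optimal value of \eqref{eq:WPI} with weight vector $p$ over $\EP_d\cap\EP_{s_q}$. This problem is solvable in strongly polynomial time given value oracles for $d$ and $s_q$. (If $p$ is rational, as in that proposition, the statement applies directly.)
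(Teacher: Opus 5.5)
Your proposal is correct and follows essentially the same route as the paper: you identify $R(p\mid q,d)$ with the supremum representation from Lemma~\ref{lemma:dualsconv}, remove the constraint $y\ge 0$ by the truncation $\max\{\alpha_i,0\}$ (justified via monotonicity on the restricted indicator vector), and invoke Proposition~\ref{prop:sep_sublinear} for the complexity claim. The only cosmetic difference is that you apply monotonicity to $h=d\wedge s_q$ directly, whereas the paper applies it to $d$ and $s_q$ separately.
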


\begin{proof}
We have
\begin{align*}
R(p \mid q, d)& = \max \{ p^\intercal y \mid y \in \EP_d \cap \R_+^n,\; y \in \EP_{s_q} \cap \R_+^n \} \\
&= \max \{ p^\intercal y \mid y \in \EP_{d},\; y \in \EP_{s_q} \} \\
& = \sconv(d\wedge s_q)(p),
\end{align*}
where the first equality follows from the definitions of $\EP_{d}$ and $\EP_{s_q}$, and the last equality follows from Lemma~\ref{lemma:dualsconv}. To justify the second equality, observe that every feasible solution of the optimization problem on the left-hand side is also feasible for the problem on the right-hand side. Therefore, it suffices to show that for any feasible solution $y$ of the problem on the right-hand side, there exists another feasible solution $z\in \R^n_+$ whose objective value satisfies $p^\intercal z \geq p^\intercal y$.

Consider an arbitrary vector $y \in \R^n$ satisfying $y^\intercal v \le d(v)$ and $y^\intercal v \le s_q(v)$ for all $v \in \{0,1\}^n$. Define a vector $z \in \R^n_+$ by
\[
z_i = \max\{y_i,0\} \quad\for i \in [n].
\]
Since $p \in \R_+^n$, it is immediate that $p^\intercal z \ge p^\intercal y$. Thus, it remains to verify the feasibility of $z$. We first establish feasibility with respect to the constraints induced by the demand function $d$. Let $v \in \{0,1\}^n$ be arbitrary. Define a vector $u \in \{0,1\}^n$ component-wise by
\[
u_i = \begin{cases}
v_i, & \text{if } y_i > 0,\\
0, & \text{if } y_i \le 0,
\end{cases}\quad\for i\in [n].
\]
Then, we have
\[
z^\intercal v = \sum_{y_i>0} y_i v_i =
y^\intercal u \leq d(u) \leq d(v).
\]
The first inequality follows from the fact that $y \in \EP_d$. The second follows from $u \leq v$ and  $d$ is monotonic. Therefore, we conclude that $z^\intercal v \le d(v)$ for $v \in \{0,1\}^n$. Using the same argument and the monotonicity of $s_q$, we obtain $z^\intercal v \le s_q(v)$ for $v \in \{0,1\}^n$. Hence, $z$ is feasible for the optimization problem on the left-hand side and satisfies $p^\intercal z \ge p^\intercal y$ since $p\ge 0$. This establishes the second equality. Finally, by Proposition~\ref{prop:sep_sublinear}, it follows that $R(p \mid q, d)$ can be evaluated in strongly polynomial time.\qed
\end{proof}

We now specialize Theorem~\ref{theo:facet-1} to the setting studied by~\cite{punt2025multi}, where the available supply is linear, i.e., $s_q(x) = q^\intercal x$ for $x \in \{0,1\}^n$. This specialization allows us to derive an explicit, closed-form expression for the expected revenue function $R(q,d)$.  Before presenting this special case, we illustrate in the next example that  Theorem~\ref{theo:facet-1} handles more complicated cases than those treated by~\cite{punt2025multi}.  

\begin{example}\label{ex:3}
Consider $f:\{0,1\}^3 \to \R$ defined as $f(x) = \min \{f_1(x),f_2(x)\}$, where the modular component is given by
\[
f_1(x)=5x_1+5x_2+5x_3 \quad\for x\in\{0,1\}^3,
\]
and the submodular component is defined as
\[
f_2(x)=2x_1+2x_2+7x_3-x_1x_2\:\for x\in L,\quad \text{and }f_2(x) = +\infty\:\for x\notin L.
\]
Under two distinct choices for the lattice family $L$, we evaluate the sublinear envelope of $f$.  

Case 1: The unconstrained lattice $L = \{0,1\}^3$. By enumerating all permutations and applying Theorem~\ref{theo:facet-1}, we obtain the sublinear envelope of $f$ explicitly as:
\[
\sconv(f)(x) = \max\left\{ 2x_1 + x_2 + 5x_3,\; x_1 + 2x_2 + 5x_3 \right\}.
\]

Case 2: The constrained lattice $L = \{x \in \{0,1\}^3 \mid x_2 \le x_3\}$. Under this domain restriction, enumerating all permutations and applying Theorem~\ref{theo:facet-1} yields:
\[
\sconv(f)(x) = \max\left\{
2x_1 + 5x_2 + 3x_3,\;
2x_1 + 3x_2 + 5x_3,\;
x_1 + 5x_2 + 4x_3,\;
x_1 + 4x_2 + 5x_3
\right\}.
\]

Consequently, appending even a single linear inequality ($x_2 \le x_3$) to the domain alters the polyhedral structure of the sublinear envelope substantially. Specifically, the linear functions that define the facets of $\sconv(f)$ in the unconstrained case are no longer facet-defining under the restricted domain. Instead, a completely different and larger set of linear functions is required to precisely characterize the sublinear envelope.\qed
\end{example}

Following the construction of Theorem~\ref{theo:facet-1}, we incorporate the supply and demand functions into one single intersecting submodular function. Since both functions are unconstrained,  the  function defined in~\eqref{eq:inter} becomes an intersecting submodular function  $g:\{0,1\}^n\to\mathbb{R}$ given as follows
\begin{equation*}
g(x)= \begin{cases}
\min\{d(x),\, q^\intercal x\} & \text{if } x\in\{e_i \mid i\in[n]\},\\
d(x) & \text{otherwise}.
\end{cases}
\end{equation*}
For a given price vector $p \in \R^n_+$, we assume that $p_1 \geq p_2 \geq \cdots p_n \geq 0$; otherwise we sort the items to satisfy this relation. It follows readily from Theorem~\ref{theo:facet-1} that $\sconv(d \wedge s_q)(p) = \langle \mu^*, p\rangle $, where $\mu^* = (\mu^*_1, \ldots, \mu^*_n)$ is obtained recursively as follows. Initially, $\beta^0 = \mathbf{0}$, $v_0 =\mathbf{0}$. Next, for $t \in [n]$,  compute
\begin{equation}\label{eq:news-closed}
\mu^*_t = \min_{y \in \{0,1\}^n}
\left\{ g\bigl(e_t+y\bigr) - \langle \beta^{t-1}, y\rangle \;\middle|\; y \le v_{t-1},\;  e_t + y\in \{0,1\}^n \right\},
\end{equation}
and set $\beta^t = \beta^{t-1} + \mu_t^* e_t$ and $v_t = v_{t-1} + e_t$.

\begin{corollary}\label{coro:regular}
    Let $d$ be a non-decreasing submodular demand realization and for an order quantity $q \geq 0$, let $s_q(x) = q^\intercal x$. For a given price $p\in \R^n_+$ such that $p_1 \geq p_2 \geq \cdots \geq p_n$, we have $R(p\mid q, d)= \langle \mu^*,  p  \rangle $, where $\mu^*$ is recursively defined as in~\eqref{eq:news-closed}.
\end{corollary}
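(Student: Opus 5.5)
The plan is to chain Proposition~\ref{prop:news} with Theorem~\ref{theo:facet-1}. First, since $d$ is non-decreasing and $s_q(x)=q^\intercal x$ with $q\geq 0$ is non-decreasing, Proposition~\ref{prop:news} gives $R(p\mid q,d)=\sconv(d\wedge s_q)(p)$ for $p\geq 0$. The remaining task is to evaluate $\sconv(\min\{q^\intercal x, d(x)\})$ at $p$ by Theorem~\ref{theo:facet-1}.

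To invoke Theorem~\ref{theo:facet-1}, I take $f_1(x)=q^\intercal x$ (so $a=q$) and $f_2=d$ with lattice family $L=\{0,1\}^n$. Condition~\eqref{eq:condonf2} holds trivially, since $d$ is submodular on $\{0,1\}^n$ and no points lie outside $L$. The function $g$ of~\eqref{eq:inter} then has domain $X=L\cup\{e_i\}=\{0,1\}^n$. It equals $\min\{d(e_i),q_i\}$ at unit vectors and $d(x)$ elsewhere, which is exactly the $g$ displayed before the corollary. Properness (Assumption~\ref{ass:proper}) is ensured because $d\geq 0$ and $q\geq 0$, so $f(\mathbf{0})\geq 0$.

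Because $p_1\geq\cdots\geq p_n$, the permutation $\sigma$ may be taken to be the identity, so $v^\sigma_t=v_t=\sum_{i\le t}e_i$. The recursion~\eqref{eq:intersecting-facet} then reads verbatim as~\eqref{eq:news-closed}: the constraint $e_t+y\in X$ becomes $e_t+y\in\{0,1\}^n$, and $\beta^t$ is updated identically. Theorem~\ref{theo:facet-1} therefore yields $\sconv(d\wedge s_q)(p)=\langle\mu^*,p\rangle$, and the claim follows.

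The only delicate point is ties among the $p_i$, where the sorting permutation is not unique. I would note that Theorem~\ref{theo:facet-1} holds for any permutation that sorts $\bar x$ in non-increasing order, so the identity is a valid choice. Beyond that, the argument is a direct specialization, and I do not expect any real obstacle.
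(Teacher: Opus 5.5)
Your proposal is correct and follows the same route as the paper: it obtains $R(p\mid q,d)=\sconv(d\wedge s_q)(p)$ from Proposition~\ref{prop:news}, then applies Theorem~\ref{theo:facet-1} with $f_1=q^\intercal x$, $f_2=d$, and $L=\{0,1\}^n$. The details you add are left implicit in the paper: properness via $d\geq 0$, the validity of the identity permutation even with ties, and the verbatim reduction of~\eqref{eq:intersecting-facet} to~\eqref{eq:news-closed}.
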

\begin{proof}
    From Proposition~\ref{prop:news}, we have $    R(p \mid q,d) = \sconv(d \wedge s_q) (p)$ for $p \geq 0$,  where $(d \wedge s_q)(x):= \min \bigl\{d(x),s_q(x)\bigr\}$ for $x \in \{0,1\}^n$. Then, the result follows directly from Theorem~\ref{theo:facet-1}.\qed
\end{proof}

\subsection{Bilinear Programming}\label{sec:bilinear}
Consider a 0-1 bilinear function
\[
f(x)=\sum_{(i,j)\in E} a_{ij} x_i x_j\quad\for x\in\{0,1\}^n,
\]
where $E$ is the edge set of a graph $G = ([n], E)$.
A standard approach for handling binary bilinear functions is to introduce a new variable $y_{ij}$ for each bilinear term $x_i x_j$, and enforce the relationship between $y$ and $x$ using the McCormick inequalities~\cite{mccormick1976computability}. The resulting linear programming relaxation is known as the McCormick relaxation.

An approach to tighten McCormick relaxation is to consider the Boolean Quadratic Polytope (BQP)~\cite{padberg1989boolean},
\[
\conv\left(\bigl\{(y,x)\mid y_{ij} = x_i x_j\:\for\: (i,j)\in E,\ x\in \{0,1\}^n\bigr\}\right),
\]
and introduce inequalities valid for this polytope to the relaxation. The odd cycle inequalities, first introduced by \cite{padberg1989boolean}, are among the most widely used classes of valid inequalities. When the graph associated with the bilinear function forms a cycle, the McCormick inequalities together with the odd cycle inequalities provide a complete characterization of BQP. However, when several cycles are present, this approach may require the introduction of $|E|$ additional variables, which in the worst case amounts to $n(n-1)/2$ variables. Consequently, the problem size grows quadratically as $n$ increases.

Our approach avoids introducing the additional variables $y$ while strengthening the McCormick relaxation. The main idea is to exploit disjunctive submodular structures inherent in $f$. In this subsection, we develop the theoretical foundations of this approach. We defer the computational study to Section~\ref{sec:computation}, where we investigate how these disjunctive submodular structures can be exploited to strengthen relaxations for bilinear programs. 

Recall a bilinear function $f(x) = \sum_{(i,j)\in E} a_{ij} x_i x_j$ is submodular if and only if $a_{ij} \le 0$ for all $(i,j)\in E$~\cite{boros2002pseudo}. Therefore, if $f$ is not submodular, there must exist an index set $I \subseteq [n]$ and an index set $J_i \subseteq [n]$ such that $a_{ij} > 0$ for all $i\in I$ and $j \in J_i$. In fact, a bilinear function $f$ is disjunctive submodular function, as it is submodular on the collection of faces 
\[
\bigl\{x \in [0,1]^n \bigm| x_i =0 \for i \in I_0 \text{ and } x_i = 1 \for i \in I \setminus I_0,\ I_0 \subseteq I \bigr\}.
\]
To represent this special disjunctive submodular function as the pointwise minimum of submodular functions, instead of using Theorem~\ref{them:extension-lattice}, we could use the following fact:
\[
\sum_{j\in J_i} a_{ij} x_ix_j  = x_i \left( \sum_{j\in J_i} a_{ij} x_j \right)
=
\min\left\{
\sum_{j\in J_i} a_{ij} x_i,\;
\sum_{j\in J_i} a_{ij} x_j
\right\} \qquad \for i \in I,
\]
where the last equality holds since for $x_i = 0$, both sides are equal to zero, and for $x_i = 1$, the left-hand side reduces to $\sum_{j\in J_i} a_{ij} x_j$, and as $\sum_{j\in J_i} a_{ij} x_j \le \sum_{j\in J_i} a_{ij} x_i$, the right-hand side evaluates to the same quantity. Consequently, any bilinear function can be reformulated as the minimum of several submodular bilinear functions. In particular, when the minimum is taken over two submodular functions, the separation problem over $\conv(f)$ can be solved in strongly polynomial time.
\begin{corollary}\label{eq:sep_bilinear}
    Let $f(x) = x_1\cdot a^\intercal x + g(x)$ on $x\in \{0, 1\}^n$, where $a\in \R^n_+$ and $g$ is a submodular bilinear function. Then, the separation problem of $\conv(f)$ can be solved in strongly polynomial time.
\end{corollary}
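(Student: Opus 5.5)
The plan is to write $f$ as the pointwise minimum of two submodular functions on $\{0,1\}^n$ and then invoke Theorem~\ref{convex-sep_of_lat} with $X=\{0,1\}^n$.

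First I will use the identity displayed just before the corollary. Since $x_1\in\{0,1\}$ and $a\geq 0$, we have
\[
x_1\cdot a^\intercal x=\min\Bigl\{x_1\sum_{j} a_j,\; a^\intercal x\Bigr\}.
\]
Indeed, if $x_1=0$ both sides equal $a^\intercal x$ restricted appropriately. More precisely, the left side is $0$, and the right side is $\min\{0,a^\intercal x\}=0$ because $a^\intercal x\ge 0$. If $x_1=1$, then $a^\intercal x\le \sum_j a_j$, so the minimum equals $a^\intercal x$. The term $a_1x_1^2=a_1x_1$ needs no special treatment, because it is handled by the same identity. Hence
\[
f(x)=\min\{f_1(x),f_2(x)\},\quad f_1(x):=\Bigl(\sum_j a_j\Bigr)x_1+g(x),\quad f_2(x):=a^\intercal x+g(x).
\]

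Next I check that each $f_k$ is submodular on the lattice family $\{0,1\}^n$. Each $f_k$ is a modular function plus the submodular function $g$, and adding a modular function preserves submodularity. Evaluation oracles for $f_1$ and $f_2$ are clearly available in strongly polynomial time, given those for $g$ and $a$.

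Finally I apply Theorem~\ref{convex-sep_of_lat}, in its unconstrained specialization using the extensions~\eqref{eq:extension-hypercube} of Remark~\ref{rmk:extension_hypercube}. This yields a strongly polynomial separation oracle for $\conv(f)$. One detail to verify is the constant $C=(n+1)^2(F^U-F^L)$ in the extension: the bounds $F^L,F^U$ must be computable in strongly polynomial time. For bilinear $g$ this is immediate, e.g.\ by taking the sums of the positive and of the negative coefficients, which is enough because any valid bounds suffice. Alternatively, the bounds follow from submodular minimization.

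The main (mild) obstacle is exactly this: making sure that the quantities fed to the weighted polymatroid intersection algorithm, namely the extended functions, their evaluation oracles, and $C$, are obtainable in strongly polynomial time. Beyond that, the result is a direct consequence of the two-function representation.
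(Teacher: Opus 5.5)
Your proposal is correct and follows the paper's proof essentially verbatim: the paper also writes $f(x)=\min\{x_1\cdot\mathbf{1}^\intercal a+g(x),\ a^\intercal x+g(x)\}$, notes that both pieces are submodular because they differ from $g$ by modular terms, and invokes Theorem~\ref{convex-sep_of_lat}. Your extra check that a valid constant $C$ is computable from the positive and negative coefficient sums is a harmless elaboration the paper leaves implicit; note that submodular minimization alone would only give $F^L$ and not $F^U$, so rely on the coefficient-sum bound.
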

\begin{proof}
    Since $x \in \{0,1\}^n$ and $a \in \R_+^n$, we can express $f(x)$ as
\[
f(x)
=
\min\left\{
x_1 \cdot \mathbf{1}^\intercal a + g(x),\;
a^\intercal x + g(x)
\right\}.
\]
Because $g(x)$ is a submodular function, both $x_1 \cdot \mathbf{1}^\intercal a + g(x)$ and $a^\intercal x + g(x)$ are submodular functions as well. Therefore, Theorem~\ref{convex-sep_of_lat} yields a strongly polynomial-time separation oracle for $\conv(f)$. \qed
\end{proof}

Next, we consider a special case where the graph $G$ is a cycle. Without loss of generality, we can assume that at most one edge has a positive coefficient; otherwise, the  switching operation  $x_i \mapsto 1-x_i$ can be applied. Under this assumption, the bilinear function can be represented as the minimum of two submodular functions, allowing Corollary~\ref{eq:sep_bilinear} to be applied. Exploiting the cycle structure further, however, enables us to refine the construction and derive a closed-form expression directly in the $x$-variable space. Moreover, separation for the convex envelope can be performed in $\mathcal{O}(n)$ time, rather than by using the weighted polymatroid intersection algorithm suggested by Theorem~\ref{convex-sep_of_lat} or solving the high-dimensional LP proposed in Theorem~\ref{theorem:LP}.

\begin{theorem}\label{theo:cycle-envelope}
    Let $f(x) = \sum_{i\in [n-1]}a_{i}x_ix_{i+1} + x_1x_{n}$, where for $i \in [n-1]$, $a_{i} < 0$. Then, the convex envelope of $f(x)$ is given by $r(x)$,
    \begin{equation}\label{eq:cycle-env}
        \begin{aligned}
            r(x) &:= \max\{0, x_1+x_n - 1\} + \sum_{i\in [n-1]}a_{i}\min\{x_i, x_{i+1}\} \\
            &+ a \cdot \max\left\{0, \sum_{i\in [n-1]}\min\{x_i, x_{i+1}\} - \max\{0, x_1+x_n - 1\} - \sum_{i=2}^{n-1}x_i\right\},\\
        \end{aligned}\tag{\textsc{Cycle-Env}}
    \end{equation}
    where $a := \min\bigl\{|a_{1}|, \ldots, |a_{n-1}|, 1\bigr\}$.
\end{theorem}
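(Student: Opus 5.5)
We prove $\conv(f)=r$ on $[0,1]^n$ in two steps: first $r$ is a convex underestimator of $f$, so $r\le\conv(f)$; then $r\ge\conv(f)$ via a polyhedral/duality argument.

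\textbf{Step 1: $r$ is convex and agrees with $f$ on vertices.} Write $m_i:=\min\{x_i,x_{i+1}\}$ and $s:=\max\{0,x_1+x_n-1\}$. Convexity is not termwise, because $a_i<0$ makes $a_i m_i$ convex but the term $-a s$ inside the last bracket is concave. We therefore regroup. Set $h:=\sum_{i}m_i-s-\sum_{i=2}^{n-1}x_i$. Since $a\le |a_i|$ for every $i$ and $a\le 1$, the function $r$ is the maximum of two expressions:
\[
s+\sum_i a_i m_i \quad\text{and}\quad (1-a)s+\sum_i (a_i+a)m_i - a\sum_{i=2}^{n-1}x_i .
\]
Each is convex, because $1-a\ge 0$, $a_i+a\le 0$, $s$ is convex and each $m_i$ is concave. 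Hence $r$ is a maximum of convex functions and is convex.

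At a binary point, $m_i=x_ix_{i+1}$ and $s=x_1x_n$. It then suffices to check $h\le 0$ on $\{0,1\}^n$, which is a counting argument along the path $1,\dots,n$: the number of adjacent pairs of ones is at most the number of interior ones, plus one if both endpoints are one. That extra one is exactly cancelled by $s$. So $r=f$ on vertices, and $r\le\conv(f)$.

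\textbf{Step 2: $r\ge\conv(f)$.} The cleanest route is to exhibit, for every $\bar x\in[0,1]^n$, a convex combination of vertices realizing $r(\bar x)$. Equivalently, $r$ is the restriction of the extended formulation of BQP on a cycle: the McCormick inequalities plus the odd cycle inequalities describe BQP for a cycle, as stated before Corollary~\ref{eq:sep_bilinear}. Hence
\[
\conv(f)(\bar x)=\min_{y}\Bigl\{\sum_i a_i y_i + y_{1n} \Bigm| (y,\bar x)\ \text{satisfies McCormick and odd-cycle inequalities}\Bigr\}.
\]
We solve this LP in closed form. Since $a_i<0$, we want each $y_i$ large, with $y_i\le m_i$. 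Since the coefficient of $y_{1n}$ is positive, we want $y_{1n}$ small, with $y_{1n}\ge s$. These are coupled only through the cycle inequalities. For a cycle with one positive (switched) edge, the relevant inequality is $\sum_i y_i - y_{1n}\le \sum_{i=2}^{n-1}x_i$, which is the inequality underlying $h$.

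The LP then becomes: minimize $\sum_i a_i y_i + y_{1n}$ subject to $y_i\le m_i$, $y_{1n}\ge s$, and one coupling constraint. If $h\le 0$, the optimum is $y_i=m_i$, $y_{1n}=s$. Otherwise the excess $h$ must be absorbed at the cheapest rate, either by lowering some $y_i$ (cost $|a_i|$ per unit) or by raising $y_{1n}$ (cost $1$ per unit). This gives the penalty $a\cdot h^+$ with $a=\min\{|a_i|,1\}$, so the LP value equals $r$.

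\textbf{Main obstacle.} We must verify that the remaining odd-cycle and McCormick constraints are slack at this solution. In particular, lowering some $y_i$ must not violate $y_i\ge x_i+x_{i+1}-1$, and raising $y_{1n}$ must not violate $y_{1n}\le\min\{x_1,x_n\}$. We also must check that no other cycle inequality binds. We will handle this by bounding $h$ by the available slack in each variable, i.e., showing $h\le m_i-\max\{0,x_i+x_{i+1}-1\}$ and $h\le \min\{x_1,x_n\}-s$; the second bound should follow by direct case analysis. If this bookkeeping becomes unwieldy, the fallback is a direct explicit decomposition of $\bar x$ into vertices, sorting coordinates as in the Kuhn triangulation, with one modification at the closing edge.
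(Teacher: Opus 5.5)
Your Step~1 is correct and is essentially the paper's argument: $r$ is the maximum of two convex pieces, and $h\le 0$ at every vertex. The gap is in Step~2.

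The LP you actually solve keeps only $y_i\le m_i$, $y_{1n}\ge s$, and the single odd-cycle inequality with $D=\{(1,n)\}$. Its value is indeed $r(\bar x)$. But minimizing over a relaxation of the BQP fiber only gives $\conv(f)(\bar x)\ge r(\bar x)$, which Step~1 already provides. The direction you need, $\conv(f)(\bar x)\le r(\bar x)$, holds only once you prove that your candidate $y^*$ lies in the full fiber. Here $y^*=(m,s)$ when $h\le 0$; otherwise it is $(m,s)$ with $y_{i^*}$ lowered by $h$, or with $y_{1n}$ raised by $h$.

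Your McCormick checks are true. Both follow from the identity
\[
h=m_i-s-\sum_{j<i}\max\{0,x_{j+1}-x_j\}-\sum_{j>i}\max\{0,x_j-x_{j+1}\},\qquad i\in[n-1].
\]
However, you give no argument for the remaining odd-cycle inequalities
\[
x(V_1)-x(V_2)+y(C\setminus D)-y(D)\le\lfloor |D|/2\rfloor,
\]
one for each odd $D\subseteq E(C)$, so $2^{n-1}$ in total. Here $V_1$ (resp.\ $V_2$) is the set of nodes whose two cycle edges both lie in $D$ (resp.\ in $C\setminus D$). These are exactly the inequalities McCormick does not imply, and your modification pushes against many of them:
\begin{itemize}
\item lowering $y_{i^*}$ tightens every inequality with $(i^*,i^*+1)\in D$;
\item raising $y_{1n}$ tightens every inequality with $(1,n)\notin D$;
\item even when $h\le 0$, you must still show that $(m,s)$ satisfies all of them.
\end{itemize}
Your sentence ``coupled only through the cycle inequalities; the relevant one is \ldots'' is precisely the unproved claim. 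So Step~2 as written does not establish the hard direction.

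There are two ways to close the gap. One is to verify every odd-cycle inequality at $y^*$ directly, for instance via the parity form $\sum_{e\in D}(1-z_e)+\sum_{e\notin D}z_e\ge 1$ with $z_{ij}=x_i+x_j-2y_{ij}$. The other is to exhibit an explicit convex combination of vertices, which makes Padberg's description unnecessary.

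The second option is your fallback, and it is what the paper does in dual form. It writes $f=\min\{q+x_1,\,q+x_n\}$ and uses the compact LP of Theorem~\ref{theorem:LP}. It then builds an explicit split $\bar x=\sigma+\sigma'$ with weights $\bar x_n$ and $1-\bar x_n$, where $\sigma_1=0$ and
\[
\sigma_i=\min\Bigl\{\bar x_n,\ \bar x_i,\ \sum_{k<i}\max(0,\bar x_{k+1}-\bar x_k)+\delta_i\Bigr\}\quad (i\ge 2).
\]
The shift $\delta_i$ moves the excess $r_3(\bar x)$ onto the cheapest edge $i^*$ when $r_3(\bar x)\ge 0$. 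The two pieces are priced by the convex envelopes of $q+x_1$ and $q+x_n$. The cases with $\bar x_1+\bar x_n\ge 1$ are reduced to $\bar x_1+\bar x_n\le 1$ by the switching $x\mapsto\mathbf{1}-x$.

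Note that a single Kuhn-sorted chain gives $E[x_1x_n]=\min\{\bar x_1,\bar x_n\}$ rather than $s$. The required decomposition is therefore a genuine mixture of two threshold decompositions, and constructing it across four cases is where the real work of the proof lies.
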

\begin{proof}
The function $r(x)$ consists of three terms, denoted as, $r_1(x):=\max\{0, x_1+x_n - 1\}$, $r_2(x):=\sum_{i\in [n-1]}a_{i}\min\{x_i, x_{i+1}\}$ and $r_3(x) := \sum_{i\in [n-1]}\min\{x_i, x_{i+1}\} - \max\{0, x_1+x_n - 1\} - \sum_{i=2}^{n-1}x_i$. 

First, we show that $r(x)\leq \conv(f)(x)$ for $x \in [0,1]^n$. Clearly, $r(x)$ is convex since $r(x)$ can be expressed the pointwise maximum of two convex functions $r_1(x)+r_2(x)$ and $r_1(x) + r_2(x)+a \cdot r_3(x)$, where the convexity of the latter function follows from the definition of $a$. Moreover, $r(x) \leq f(x)$ for every $x \in \{0,1\}^n$. To see this, we observe that for $x \in [0,1]^n$ with $x_n =0$
\[
r_3(x) = \sum_{i \in [n-2]} \min\{x_i,x_{i+1}\} - \sum_{i =2}^{n-1} x_i = \sum_{i \in [n-2]} \bigl(\min\{x_i,x_{i+1}\} - x_{i+1} \bigr)  \leq 0;
\]
and with $x_n =1$, 
\[
r_3(x) = \sum_{i \in [n-2]} \min\{x_i,x_{i+1}\} + x_{n-1} - x_1- \sum_{i =2}^{n-1} x_i = \sum_{i \in [n-2]} \bigl(\min\{x_i,x_{i+1}\} - x_{i} \bigr)  \leq 0.
\]
Therefore, for $x \in \{0,1\}^n$, $r(x) = r_1(x) + r_2(x) = f(x)$, where the first equality holds due to $r_3(x)\leq 0$, and the second equality holds since $x_ix_j = \min\{x_i,x_j\} = \max\{0,x_i+x_j-1\}$  holds for binary $x$. Since $r(\cdot)$ is a convex underestimator of $f(\cdot)$, we have $r(x) \leq \conv(f)(x)$ for every $x \in [0,1]^n$. 

Now, we show that $r(x) \geq \conv(f)(x)$ for every $x \in [0,1]^n$. To show this, we express $f$ as the pointwise minimum of two submodular functions and invoke Theorem~\ref{theorem:LP} to provide an LP formulation for the convex envelope of $f$.  More specifically, we express $f(x)$  as $ \min \bigl\{f_1(x),f_2(x)\bigr\}$, where,
\[
f_1(x) := \sum_{i\in [n-1]} a_i x_i x_{i+1} + x_1
\quad \text{and} \quad
f_2(x) := \sum_{i\in [n-1]} a_i x_i x_{i+1} + x_n.
\]
It follows from Section 5.1.2 of~\cite{boros2002pseudo} that the convex hull of the epigraph $f_i$ can be described as follows:
\[
\begin{aligned}
    \conv\bigl(\epi(f_1)\bigr) &= \Bigl\{(\mu_1,x) \Bigm| \mu_1\geq \sum_{i\in [n-1]} a_i y_i + x_1,\ x \in [0,1]^n,\ y_i \leq \min \{x_i,x_{i+1}\} \for i \in [n-1]    \Bigr\}  \\
    \conv\bigl(\epi(f_2)\bigr) &= \Bigl\{(\mu_2,x) \Bigm| \mu_2\geq \sum_{i\in [n-1]} a_i y_i + x_n,\ x \in [0,1]^n,\ y_i \leq \min \{x_i,x_{i+1}\} \for i \in [n-1]  \Bigr\}.
\end{aligned}
\]
For a given $\bar{x} \in [0,1]^n$, Theorem~\ref{theorem:LP} yields an LP formulation whose optimal value is equal to $\conv(f)(\bar{x})$. Moreover, the dual of this LP can be expressed as follows:
\begin{subequations}\label{dual}
\begin{align}
D(\bar{x}) := \min_{\substack{\lambda,s,z,\\ \lambda',s',z'}}
\quad
&\sum_{i=1}^{n-1} a_i(\lambda_i+\lambda_i') + s_1+s_n' \nonumber\\
\text{s.t.}\quad & z+z' = 1, \label{dual-1}\\
& s_i+s_i' = \bar{x}_i \qquad \for i \in [n], \label{dual-2}\\
& \lambda_i \le s_i \text{ and } \lambda_i' \le s_i' \qquad  \for i\in[n-1],\label{dual-3}\\
& \lambda_i \le s_{i+1} \text{ and }  \lambda_i' \le s_{i+1}' \qquad   \for i\in[n-1],\label{dual-4}\\
& z \ge s_i \text{ and } z' \ge s_i'  \qquad   \for i\in[n], \label{dual-5}\\
& z \ge 0, s \ge 0,z' \ge 0, s' \ge 0.\label{dual-6}
\end{align}
\end{subequations}
By Theorem~\ref{theorem:LP} and the weak duality of LP, we have that  $ D(x) \ge \conv(f)(x)$ for $x\in[0,1]^n$. In the following, we will argue that  $r(\bar{x})\ge D(\bar{x})$, showing  that $r(\bar{x})\ge \conv(f)(\bar{x})$. We consider the following four cases according to the values of $\bar{x}_1+\bar{x}_n$ and $r_3(\bar{x})$: 
\begin{enumerate}
    \item \label{cycle:case1} $\bar{x}_1+\bar{x}_n \leq 1$ and $r_3(\bar{x})\leq 0$;
    \item \label{cycle:case2} $\bar{x}_1+\bar{x}_n \leq 1$ and $r_3(\bar{x})\geq 0$;
    \item \label{cycle:case3} $\bar{x}_1+\bar{x}_n \geq 1$ and $r_3(\bar{x})\leq 0$;
    \item \label{cycle:case4} $\bar{x}_1+\bar{x}_n \geq 1$ and $r_3(\bar{x})\geq 0$.
\end{enumerate}
 For convenience, let $\Delta \bar{x}_i := \bar{x}_{i+1} - \bar{x}_i$.

\textbf{Case~\ref{cycle:case1}:} In this case, we have $r(\bar{x}) = r_2(\bar{x})$. Now, consider the following dual variables:
    \[
    \begin{aligned}
        &s_1 = 0 \text{ and } s_i = \min\Bigl\{\bar{x}_n, \bar{x}_i, \sum_{k=1}^{i-1}\max(0, \Delta \bar{x}_k)\Bigr\} \quad \for i \geq 2, \\
        & s'_i = \bar{x}_i - s_i \, \for i \in [n] \quad  \lambda_i = \min\{s_i, s_{i+1}\} \text{ and } \lambda_i' = \min\{s'_i, s'_{i+1}\} \quad \for i \in [n-1], \\
        &z = \bar{x}_n \text{ and } z' = 1-\bar{x}_n. 
    \end{aligned}
    \]
These solutions are feasible to the dual problem. Clearly, constraints~\eqref{dual-1}-\eqref{dual-4} and~\eqref{dual-6} are satisfied. Moreover, $s_1 = 0\leq \bar{x}_n = z$ and $s_i\leq \bar{x}_n = z$ for $i \geq 2$. Last, $s'_1 = \bar{x}_1 \leq 1 -\bar{x}_n$, and for $i \geq 2$, 
    \[
    \begin{aligned}
        s'_i &= \bar{x}_i - \min\Bigl\{\bar{x}_n, \bar{x}_i, \sum_{k=1}^{i-1}\max(0, \Delta \bar{x}_k)\Bigr\} = \max \Bigl\{\bar{x}_i - \bar{x}_n, 0, \bar{x}_i - \sum_{k=1}^{i-1}\max(0, \Delta \bar{x}_k)\Bigr\} \\
        & = \max \Bigl\{\bar{x}_i - \bar{x}_n, 0, \bar{x}_1 + \sum_{k=1}^{i-1} \Delta \bar{x}_k - \sum_{k=1}^{i-1}\max(0, \Delta \bar{x}_k)\Bigr\}  \leq \max\{1-\bar{x}_n, 0, \bar{x}_1\} =  z',
    \end{aligned}
    \] 
where the equalities hold by definitions, the inequality holds since $\bar{x}_i \leq 1$ and $\sum_{k=1}^{i-1} \bigl( \Delta \bar{x}_k - \max(0,\Delta \bar{x}_k) \bigr) \leq 0$, and the last equality holds due to $1-\bar{x}_n \geq \bar{x}_1$. 

Now, we show the objective value of the feasible solution is $r_2(\bar{x})$.  Observe that  $s_n = \bar{x}_n$ and $s'_n = 0$ since in this case, we have 
\[
\sum_{k \in [n-1]}\max(0,\Delta \bar{x}_k) = \sum_{k \in [n-1]} \bigl(\bar{x}_{k+1}-\min \{\bar{x}_k,\bar{x}_{k+1}\}\bigr) = \bar{x}_n - r_3(\bar{x}) \geq \bar{x}_n,
\]
where the second equality holds due to $\max\{0,\bar{x}_1+\bar{x}_n-1\} = 0$, and the inequality holds due to $r_3(\bar{x}) \leq 0$. Moreover, for $i \in [n-1]$, we have $\lambda_i + \lambda'_i = \min \{\bar{x}_i,\bar{x}_{i+1}\}$ since $\Delta \bar{x}_i \geq 0$  implies that 
\[
\begin{aligned}
    s_i  &\leq  \min\left\{\bar{x}_n, \bar{x}_i+\Delta \bar{x}_i, \sum_{k=1}^{i-1}\max(0, \Delta \bar{x}_k)+\Delta \bar{x}_i\right\} = s_{i+1}, \\
    s'_i &= \max\{\bar{x}_i - \bar{x}_n, 0, \bar{x}_i - \sum_{k=1}^{i-1}\max(0, \Delta \bar{x}_k)\} \\
    &\leq \max\left\{\bar{x}_i-\bar{x}_n+\Delta\bar{x}_i, 0, \bar{x}_i+\Delta \bar{x}_i - \sum_{k=1}^{i-1}\max(0, \Delta \bar{x}_k)\right\} = s'_{i+1}.
\end{aligned}
\]
On the other hand, $\Delta \bar{x}_i \leq 0$ implies that $s_i\geq s_{i+1}$ and $s'_i \geq s'_{i+1}$.  Therefore, we obtain a feasible objective value of $r_2(\bar{x})$. 
This completes the proof as   
\[
\conv(f)(\bar{x})\leq D(\bar{x}) \leq r_2(\bar{x}) = r(\bar{x}). 
\]

    \textbf{Case~\ref{cycle:case2}:} In this case, $r(\bar{x}) =r_2(\bar{x}) + a r_3(\bar{x})$.  Let $i^*$ denote the index corresponding to the minimum value among $\{|a_1|, \ldots, |a_{n-1}|, 1\}$. In the case of multiple minimizers, we select an arbitrary one, and all subsequent arguments continue to hold. In particular, if the minimum value is attained at the last element $1$, we define $i^* = n$. Then, we define the following dual variables: 
    \[
    \begin{aligned}
        &s_1 = 0\quad  
    s_i = \min\Bigl\{\bar{x}_n, \bar{x}_i, \sum_{j=1}^{i-1} \max(0, \Delta \bar{x}_j) + \delta_i\Bigr\} \for i \geq 2\\
    &  s'_i = \bar{x}_i - s_i \for i \in [n] \quad \lambda_i = \min\{s_i, s_{i+1}\} \text{ and } \lambda_i' = \min\{s'_i, s'_{i+1}\} \quad \for i \in [n-1]  \\
    &    z = \bar{x}_n \text{ and } z' = 1-\bar{x}_n ,
    \end{aligned}
    \]
    where
    \[
    \delta_i = \begin{cases}
    r_3(\bar{x})  & \text{if }  i > i^*, \\
    0  & \text{otherwise}.
    \end{cases}
    \]
In the following, we will show these solutions are feasible with an objective value of $r_2(\bar{x}) + ar_3(\bar{x})$, thus completing the proof since 
\[
\conv(f)(\bar{x})\leq D(\bar{x}) \leq r_2(\bar{x}) + ar_3(\bar{x})  = r(\bar{x}). 
\]

Clearly,  constraints~\eqref{dual-1}-\eqref{dual-4} and~\eqref{dual-6} are satisfied. Moreover, $s_1 = 0\leq \bar{x}_n = z$ and $s_i\leq \bar{x}_n = z$. Last, $s'_1 = \bar{x}_1' \leq 1-\bar{x}_n$ and for $i \geq 2$, by using $\delta_i \geq 0$ together with the same argument as in Case~\ref{cycle:case1}, we obtain
  \[
    s'_i =  \max\Bigl\{\bar{x}_i - \bar{x}_n, 0, \bar{x}_i - \sum_{j=1}^{i-1} \max(0, \Delta \bar{x}_j) - \delta_i\Bigr\}\leq \max\{1-\bar{x}_n,0, \bar{x}_1 \}  = z'.
    \]
    
    Now, we derive the objective value of this feasible solution. Observe that for $i \geq 2$,   $s_i = \sum_{j=1}^{i-1} \max(0, \Delta \bar{x}_j) + \delta_i$.  This is because
\begin{align*}
\sum_{j=1}^{i-1} \max(0,\Delta \bar{x}_j) + \delta_i
&\leq \sum_{j=1}^{i-1} \max(0,\Delta \bar{x}_j) + r_3(\bar{x}) \\
&\leq \sum_{j=1}^{i-1} \max(0,\Delta \bar{x}_j)
    + \bar{x}_n
    - \sum_{j=1}^{n-1}
    \bigl(\bar{x}_{j+1}-\min\{\bar{x}_{j+1},\bar{x}_j\}\bigr) \\
&= \bar{x}_n - \sum_{j=i}^{n-1}\max(0,\Delta \bar{x}_j) \\
&\leq \bar{x}_n - \max\Bigl\{0, \sum_{j=i}^{n-1}\Delta \bar{x}_j\Bigr\}\\
&= \min\{\bar{x}_n, \bar{x}_i\},
\end{align*}
where the first inequality follows from the fact that $r_3(\bar{x}) \geq 0$, the second inequality holds as $\max\{0,\bar{x}_1+\bar{x}_n-1\}=0$, the first equality follows from the identity $\max(0,\Delta \bar{x}_j)=\bar{x}_{j+1}-\min\{\bar{x}_j,\bar{x}_{j+1}\}$, and the second equality follows from the identity $\bar{x}_i+\sum_{j=i}^{n-1}\Delta \bar{x}_j=\bar{x}_n$. In the following, we will discuss two cases, $i^*=n$ and $i^*<n$. For the first case, for $i \in [n]$, $\delta_i = 0$. Thus, it follows from Case~\ref{cycle:case1} that $\lambda_i + \lambda_i' = \min \{\bar{x}_i,\bar{x}_{i+1}\}$. Moreover,  $\delta_n = 0$ implies that 
\[
s_n' = \bar{x}_n - \sum_{j=1}^{n-1}\max(0,\Delta \bar{x}_j) = \bar{x}_n - \sum_{j =1}^{n-1} \bigl(\bar{x}_{j+1}-\min \{\bar{x}_j,\bar{x}_{j+1}\}\bigr) = r_3(\bar{x}).
\]
Therefore, we can conclude the objective value is $r_2(\bar{x}) + ar_3(\bar{x})$, where $a = 1$ in this case. Consider the second case $i^*<n$. For $i\in [n-1]$ with $i\neq i^*$, we have $s_{i+1}-s_i=\max\{0,\Delta \bar{x}_i\}$, and $s'_{i+1}-s'_i =\Delta \bar{x}_i-\max\{0,\Delta \bar{x}_i\}$. Hence,
\[
\begin{aligned}
    &s_{i+1}\geq s_i
\quad\text{and}\quad
s'_{i+1}\geq s'_i
\quad
\text{if } \Delta \bar{x}_i\geq 0,\\
&s_{i+1}\leq s_i
\quad\text{and}\quad
s'_{i+1}\leq s'_i
\quad
\text{if } \Delta \bar{x}_i\leq 0.
\end{aligned}
\]
This implies that $\lambda_i+\lambda_i'=\min\{\bar{x}_i,\bar{x}_{i+1}\}$. For the case $i\in [n-1]$ and $i=i^*$, we have
\[
s_{i+1}-s_i
=
\max\{0,\Delta \bar{x}_i\}+r_3(\bar{x})
\geq 0,
\]
showing that $\lambda_i=s_i$. In addition, 
\begin{align*}
s'_{i+1}-s'_i &=
\bar{x}_{i+1}-\bar{x}_i
-\max\{0,\Delta \bar{x}_i\}
-r_3(\bar{x}) \\
&= \min\{\bar{x}_{i+1},\bar{x}_i\} -\bar{x}_i -r_3(\bar{x}) \\
&\leq 0,
\end{align*}
where the inequality follows since $\min\{\bar{x}_{i+1},\bar{x}_i\}\leq \bar{x}_i$ and $r_3(\bar{x})\geq 0$. Therefore, we obtain
\[
\lambda_i+\lambda_i' = s_i+s'_{i+1} =\bar{x}_{i+1} - \max\{0, \Delta \bar{x}_i\} - r_3(\bar{x}) = \min\{\bar{x}_i,\bar{x}_{i+1}\} - r_3(\bar{x}).
\]
Moreover, 
\[
s_n' = \bar{x}_n - \sum_{j=1}^{n-1}\max(0,\Delta \bar{x}_j) - r_3(\bar{x}) = \bar{x}_n - \sum_{j=1}^{n-1}\max(0,\Delta \bar{x}_j) - \bar{x}_n + \sum_{j =1}^{n-1} \bigl(\bar{x}_{j+1}-\min \{\bar{x}_j,\bar{x}_{j+1}\}\bigr)) = 0.
\]
Therefore,  we obtain a feasible objective value of $r_2(\bar{x})+ar_3(\bar{x})$. This completes the proof as 
\[
    \conv(f)(\bar{x})\leq D(\bar{x}) \leq r_2(\bar{x})+ar_3(\bar{x}) = r(\bar{x}).
\]

\textbf{Case~\ref{cycle:case3}.}
In this case, we use the variable switching transformation $\tilde{x}=\mathbf{1}-x$ to reduce Case~\ref{cycle:case3} to Case~\ref{cycle:case1}. To do so, we define switched functions as follows. For $\tilde{x} \in \{0,1\}^n$, let $\tilde{f}(\tilde{x}):=f(\mathbf{1}-\tilde{x})$, $\tilde{r}(\tilde{x}) = r(\mathbf{1} - \tilde{x})$, and for $k =1,2,3$, $\tilde{r}_k(\tilde{x}) = r_{k}(\mathbf{1} - \tilde{x})$. Since convexification commutes with affine transformations, we have $\conv(f)(x) = \conv(\tilde{f})(\mathbf{1} - x)$ for $x \in [0,1]^n$. The switched functions differ from their original counterparts by affine functions. 
For $\tilde{x}\in\{0,1\}^n$, we have
\begin{align*}
\tilde{f}(\tilde{x}) =\sum_{i\in[n-1]}a_i(1-\tilde{x}_i)(1-\tilde{x}_{i+1})+(1-\tilde{x}_1)(1-\tilde{x}_n)=f(\tilde{x})-L(\tilde{x}),
\end{align*}
where $L(\tilde{x}):=\sum_{i\in[n-1]}a_i(\tilde{x}_i+\tilde{x}_{i+1}-1)+\tilde{x}_1+\tilde{x}_n-1$. Since $L$ is affine, it follows that $\conv(\tilde{f})(\tilde{x}) = \conv(f)(\tilde{x}) - L(\tilde{x})$ for $\tilde{x} \in [0,1]^n$. We next show that $\tilde{r}(\tilde{x})=r(\tilde{x})-L(\tilde{x})$ for all $\tilde{x}\in [0, 1]^n$. First, observe that 
\begin{align*}
\tilde{r}_1(\tilde{x})
&=\max\{0,1-\tilde{x}_1-\tilde{x}_n\}\\
&=\max\{0,\tilde{x}_1+\tilde{x}_n-1\}-(\tilde{x}_1+\tilde{x}_n-1)\\
&=r_1(\tilde{x})-(\tilde{x}_1+\tilde{x}_n-1),
\end{align*}
where the second equality follows from $\max\{0,-y\}=\max\{0,y\}-y$. In addition, 
\begin{align*}
\tilde{r}_2(\tilde{x})
&=\sum_{i\in[n-1]}a_i\bigl(1-\max\{\tilde{x}_i,\tilde{x}_{i+1}\}\bigr)\\
&=\sum_{i\in[n-1]}a_i\bigl(1-\tilde{x}_i-\tilde{x}_{i+1}+\min\{\tilde{x}_i,\tilde{x}_{i+1}\}\bigr)\\
&=r_2(\tilde{x})-\sum_{i\in[n-1]}a_i(\tilde{x}_i+\tilde{x}_{i+1}-1),
\end{align*}
where the second equality follows from $\max\{u,v\}=u+v-\min\{u,v\}$. Finally,
\begin{align*}
\tilde{r}_3(\tilde{x})
&=\sum_{i\in[n-1]}\bigl(1-\max\{\tilde{x}_i,\tilde{x}_{i+1}\}\bigr)
  -\max\{0,1-\tilde{x}_1-\tilde{x}_n\}
  -\sum_{i=2}^{n-1}(1-\tilde{x}_i)\\
&=\sum_{i\in[n-1]}
  \bigl(1-\tilde{x}_i-\tilde{x}_{i+1}+\min\{\tilde{x}_i,\tilde{x}_{i+1}\}\bigr)\\
&\qquad
  -\bigl(\max\{0,\tilde{x}_1+\tilde{x}_n-1\}-(\tilde{x}_1+\tilde{x}_n-1)\bigr)
  -\sum_{i=2}^{n-1}(1-\tilde{x}_i)\\
&=\sum_{i\in[n-1]}\min\{\tilde{x}_i,\tilde{x}_{i+1}\}
  -\max\{0,\tilde{x}_1+\tilde{x}_n-1\}
  -\sum_{i=2}^{n-1}\tilde{x}_i\\
&=r_3(\tilde{x}),
\end{align*}
where the second equality follows from the two identities used above. Thus, we can conclude that $\tilde{r}(\tilde{x}) = \tilde{r}_1(\tilde{x}) + \tilde{r}_2(\tilde{x}) + \tilde{r}_3(\tilde{x}) = r(\tilde{x}) - L(\tilde{x})$. 

Now, we are ready to prove that Case~\ref{cycle:case3}. Let $\bar{x}$ satisfy the conditions in  Case~\ref{cycle:case3}. Then, $(1-\bar{x}_1) + (1-\bar{x}_n) = 2-\bar{x}_1 -\bar{x}_n \leq 1$ and $r_3(\mathbf{1}-\bar{x}) = \tilde{r}_3(\bar{x}) =r_3(\bar{x}) \leq 0$. In other words, $\mathbf{1}-\bar{x}$ satisfies the conditions in  Case~\ref{cycle:case1}. Therefore,
\begin{equation}\label{eq:switchCycle}
\begin{aligned}
    r(\bar{x}) &= \tilde{r}(\mathbf{1}-\bar{x}) = r(\mathbf{1}-\bar{x}) - L(\mathbf{1}-\bar{x}) \geq \conv(f)(\mathbf{1}-\bar{x}) - L(\mathbf{1}-\bar{x}) \\
    &= \conv(\tilde{f})(\mathbf{1}-\bar{x}) =\conv(f)(\bar{x}),
\end{aligned}
\end{equation}
where the first equality holds by definition, the second holds since $\tilde{r}(\tilde{x}) = r(\tilde{x})-L(\tilde{x})$ for every $\tilde{x} \in [0,1]^n$, the inequality follows  by applying the result established in Case~\ref{cycle:case1} to $\mathbf{1}-\bar{x}$, and the last two equalities have been established above.

\textbf{Case~\ref{cycle:case4}.}
We prove this case by reducing it to Case~\ref{cycle:case2}. Let $\bar{x}\in[0,1]^n$ satisfy the conditions of Case~\ref{cycle:case4}. Then, $(1-\bar{x}_1) + (1-\bar{x}_n) = 2-\bar{x}_1 -\bar{x}_n \leq 1$ and $r_3(\mathbf{1}-\bar{x}) = \tilde{r}_3(\bar{x}) =r_3(\bar{x}) \geq 0$. Hence, $\mathbf{1}-\bar{x}$ satisfies the conditions in  Case~\ref{cycle:case2}. Then, $r(\bar{x}) \ge \conv(f)(\bar{x})$ using the same arguments as in \eqref{eq:switchCycle} except that, in this case, the inequality is shown using Case~\ref{cycle:case2}.  This completes the proof. \qed
\end{proof}

\section{Computational Results}\label{sec:computation}

In this section, we evaluate the strength of the cycle inequalities~\eqref{eq:cycle-env} by comparing them against the standard McCormick relaxation for unconstrained binary bilinear programs. All computational experiments were implemented in Julia~\cite{bezanson2017julia},
with optimization models formulated using JuMP~\cite{lubin2023jump} and solved
using Gurobi 12.0.3~\cite{Gurobi2025}. The experiments were conducted on a
machine equipped with an Intel(R) Core(TM) Ultra 7 265K CPU at 3.90 GHz and
48 GB of RAM.

Given a graph $G=([n],E)$, we define $f(x)=\sum_{(i,j)\in E}a_{ij}x_ix_j$ for $x\in\{0,1\}^n$, and the associated bilinear program as $\min\bigl\{f(x) \mid x\in\{0,1\}^n\bigr\}$. The standard McCormick relaxation introduces an auxiliary variable $y_{ij}$ for each edge $(i,j)\in E$, resulting in the LP relaxation:
\begin{equation}\label{eq:mcRela}
\begin{aligned}
\min \quad & \sum_{(i,j)\in E} a_{ij}y_{ij} \\
\text{s.t.}\quad
& \max\{0,x_i+x_j-1\}\leq y_{ij}\leq \min\{x_i,x_j\},
\quad \for (i,j)\in E,\\
& x\in[0,1]^n .
\end{aligned}
\tag{\textsc{McCormick}}
\end{equation}
Besides being weak, a key limitation of the McCormick relaxation is its dimensionality: it requires $|E|$ auxiliary variables, and, therefore, its size grows quadratically with $n$ for dense graphs. In contrast, our cycle inequalities are derived directly in the $x$-space. We integrate them into a compact "Two-$n$" relaxation that uses only $2n+1$ variables, regardless of graph density. This formulation is given by:
\begin{equation}\label{eq:2nRela}
\begin{aligned}
\min_{s,y,x}\quad
& s,\\
\text{s.t.}\quad
& s\geq \sum_{i\in[n]}y_i,\\
& y_i\geq 
\max\bigg\{
L_i x_i,\,
\sum_{j>i,(i,j)\in E}a_{ij}x_j+U_i(x_i-1)
\bigg\},
\quad \for i\in[n],\\
& x\in[0,1]^n .
\end{aligned}
\tag{\textsc{Two-$n$}}
\end{equation}
where $L_i$ and $U_i$ denote the lower and upper bounds, respectively, of $\sum_{j>i,(i,j)\in E}a_{ij}x_j$. The variables $x_i$ are ordered according to the non-increasing degree of the corresponding nodes. Furthermore, when the integrality constraint $x\in\{0,1\}^n$ is enforced, Formulation~\eqref{eq:2nRela} becomes an exact mixed-integer linear formulation of the original binary bilinear optimization problem.


Since the number of valid cycle inequalities is exponential, we employ a cutting-plane approach that starts from the solution of~\eqref{eq:2nRela} and iteratively separates violated inequalities. We limit cycle lengths to a chosen parameter $l$ and pre-enumerate all such cycles. For a fractional solution $(\bar{x},\bar{y},\bar{s})$, we identify violated cuts associated with the global epigraph variable $s$, individual epigraph variables $y_i$, and specific cycles. The associated graph for $s$ is $G$. For $y_i$, it is a star and, for a cycle, the graph is a set of stars (corresponding to the epigraph variables $y_i$) that covers the cycle. For each associated subgraph, we decompose the edge set into a collection of edge-disjoint cycles and remaining edges. The separation procedure constructs affine underestimators by combining the cycle-specific underestimators (derived from Theorem~\ref{theo:cycle-envelope}) with the McCormick inequalities for the remaining edges. This aggregation yields lower-bounding inequalities for $s$, $y_i$, or the sum of $y_i$ that covers the cycle. Details of the decomposition and the separation oracle are provided in Appendix~\ref{App:sep-cycle}. For individual $y_i$ and cycle-associated sums of $y_i$, we additionally generate affine overestimators by applying the same separation procedure to the negative of the corresponding bilinear function and reversing the resulting inequality. For $s$, we generate only underestimating inequalities since $s$ is the minimized global epigraph variable. The process repeats for at most $T$ iterations or until no violated inequalities remain.

We investigate seven distinct classes of graphs $G$. The first three classes are dense and consist of complete graphs with random weights, bipartite graphs with random weights, and complete graphs with edge weights derived from a Hadamard matrix. These are adopted from \cite{luedtke2012some,boland2017bounding}, where it was shown that the McCormick relaxation may be particularly weak on these instances. Motivated by \cite{gupte2020extended}, we additionally study cactus graphs and Halin graphs, which, although sparse, contain cycles so that \eqref{eq:mcRela} often exhibits a relaxation gap. To further extend the cactus graph family, we introduce two additional variants in which each cycle contains at most one chord and at most two chords, respectively. For all graph classes, except for the one based on the Hadamard matrix, the weight $a_{ij}$ for each edge $(i,j) \in E$ is independently set to $1$ with probability $\theta$ and to $-1$ with probability $1-\theta$. The specific topological construction of each graph class is detailed as follows:

\begin{itemize}
    \item \textbf{Complete graph with random weights.} 
    $G = ([n], E)$ where $E = \bigl\{(i, j) \mid i < j, i,j\in [n]\bigr\}$.

    \item \textbf{Bipartite graph with random weights.}
    $G = (V_1 \cup V_2, E)$ with $V_1 = \{1, \ldots, \lfloor n/2\rfloor\}$, $V_2 = \{\lfloor n/2\rfloor + 1, \ldots, n\}$, and $E=\{(i, j) \mid i\in V_1, j\in V_2\}$.

    \item \textbf{Complete graph with Hadamard matrix.}
    $G$ is complete. Let $k = \lceil \log_2(n) \rceil$. Each vertex $i \in [n]$ is associated with a binary vector $v_i \in \{0,1\}^k$ (binary representation of $i-1$). The edge weight is defined as the Hadamard matrix, where $a_{ij} = (-1)^{v_i^\intercal v_j}$.

    \item \textbf{Cactus graph with random weights.}
    Constructed iteratively starting with $V=\{1\}$ and $E=\emptyset$. The remaining $n-1$ vertices are partitioned into random disjoint subsets $S_1, \ldots, S_k$ with $2 \le |S_m| \le 5$. For each $S_m$, a vertex $u$ is chosen uniformly from current $V$. A simple cycle is formed on $\{u\} \cup S_m$, its edges are added to $E$, and $V \leftarrow V \cup S_m$.

    \item \textbf{Cactus graph with one chord per cycle.}
    Built as above. After cycle generation, for each cycle of length $\ge 4$, two uniformly random non-adjacent vertices are selected, and the corresponding edge is added to $E$.

    \item \textbf{Cactus graph with two chords per cycle.}
    Constructed identically to the one-chord variant, except up to two uniformly random edges are added to each cycle of length $\ge 4$.

    \item \textbf{Halin graph with random weights.}
    $G = (V, E_T \cup E_C)$, where $E_T$ and $E_C$ induce a tree and a cycle subgraph respectively. $E_T$ is generated by starting with a root connected to three leaves and iteratively attaching a new node to a random internal node until $|V|=n$. Let $L \subset V$ be the set of leaves. A depth-first search yields an ordering $v_1, \ldots, v_{|L|}$ of $L$. The cycle edges $E_C = \{(v_i, v_{i+1}) \mid 1 \le i < |L|\} \cup \{(v_{|L|}, v_1)\}$ connect the leaves in sequence.
\end{itemize}

For the dense graph classes (Tables~\ref{tab:com-1}--\ref{tab:sep-had}), we restrict cuts to those associated with the global variable $s$ and individual variables $y_i$, as cycle-specific cuts offer marginal gains on these graphs. We test instances with $n \in \{60, 80, 100\}$ and edge probability $\theta \in \{0.4, 0.5, 0.6\}$, generating 10 instances per parameter set. The maximum cycle length $l$ is set to 3 for complete graphs, where triangles dominate other odd-cycle inequalities, and is set to 4 for bipartite graphs. We perform $T \in \{20, 40, 60\}$ cut-generation iterations, and add these inequalities to \eqref{eq:2nRela} resulting in the formulation  \textsc{Cycle}-$T$. Observe that each cut-generation iteration typically generates several cuts.

For the sparse graph classes (Tables~\ref{tab:cac}--\ref{tab:halin}), we include all three classes of cuts and test cycle lengths $l \in \{3, 5\}$ with $T=10$ iterations. We test instances with $n \in \{1000, 1500, 2000\}$ and $\theta \in \{0.4, 0.5, 0.6\}$, generating $10$ instances per parameter set.

We report the average optimal value (Opt), the LP bounds from \eqref{eq:mcRela} and \textsc{Cycle}-$T$, and their solution times. We quantify the improvement using the \textit{Gap Closed} metric defined below:
\[
\text{Gap Closed} = \frac{\text{Bound}_{\text{Cycle-$T$}} - \text{Bound}_{\text{McCormick}}}{\text{Opt} - \text{Bound}_{\text{McCormick}}}.
\]

Tables~\ref{tab:com-1},~\ref{tab:bip-1} and~\ref{tab:had-1} demonstrate that cycle inequalities close over $50\%-99\%$ of the optimality gap left by the McCormick relaxation for complete graphs, while $15\%-30\%$ of the gap for bipartite graphs. Crucially, the \textsc{Cycle}-20 formulation solves faster than the McCormick relaxation despite the additional cutting-plane iterations. While gap closure decreases slightly as $n$ and $\theta$ increase, the method remains robust, closing $\approx 60\%$ of the gap for large complete and Hadamard graphs.

Figures~\ref{fig:complete-graph}--\ref{fig:Hadamard} show that increasing iterations to 40 or 60 further improves gap closure (e.g., while \textsc{Cycle}-20 closes only $15\%-30\%$ of the gap left by \eqref{eq:mcRela}, \textsc{Cycle}-60 closes as much as $30\%-75\%$ of this gap for bipartite graphs) with solution time comparable to \eqref{eq:mcRela}. The separation procedure is highly efficient; total separation times remain under 10 seconds even for 60 iterations on $n=100$ instances (Tables~\ref{tab:sep-com}, \ref{tab:sep-bip}, \ref{tab:sep-had}).

\begin{table}[htbp]
    \centering
    \begin{tabular*}{0.95\textwidth}{@{\extracolsep{\fill}}cc c cc cccc@{}}
    \toprule
    \multirow{2}{*}{$n$} & \multirow{2}{*}{$\theta$} & \multirow{2}{*}{Opt} & \multicolumn{2}{c}{McCormick} & \multicolumn{4}{c}{Cycle-20} \\
    \cmidrule(lr){4-5} \cmidrule(lr){6-9}
    & & & Bound & Time(s) & Bound & Gap closed(\%) & Time(s) & \#cuts \\
    \midrule
    \multirow{3}{*}{60} & 0.4 & -742.0 & -1065.7 & 0.02 & -744.4 & 99.2 & 0.00 & 656.1 \\
     & 0.5 & -295.4 & -898.4 & 0.01 & -479.7 & 69.4 & 0.01 & 711.4 \\
     & 0.6 & -113.2 & -710.0 & 0.02 & -332.7 & 63.2 & 0.01 & 729.1 \\
    \multirow{3}{*}{80} & 0.4 & -1285.6 & -1895.9 & 0.04 & -1306.6 & 96.6 & 0.01 & 944.2 \\
     & 0.5 & -436.6 & -1584.6 & 0.03 & -826.0 & 66.1 & 0.01 & 937.0 \\
     & 0.6 & -139.0 & -1250.8 & 0.02 & -581.3 & 60.2 & 0.01 & 957.1 \\
    \multirow{3}{*}{100} & 0.4 & -2107.2 & -2999.7 & 0.08 & -2111.2 & 99.6 & 0.01 & 1183.7 \\
     & 0.5 & -650.2 & -2494.9 & 0.05 & -1292.2 & 65.2 & 0.02 & 1141.0 \\
     & 0.6 & -195.6 & -2002.4 & 0.05 & -933.9 & 59.1 & 0.02 & 1170.1 \\
    \bottomrule
    \end{tabular*}
    \caption{Comparison of \eqref{eq:mcRela} and \textsc{Cycle}-20 on complete graphs}
    \label{tab:com-1}
\end{table}

\begin{figure}[htbp]
\centering

\begin{minipage}[c]{0.5\textwidth}
\centering
\includegraphics[width=0.88\linewidth]{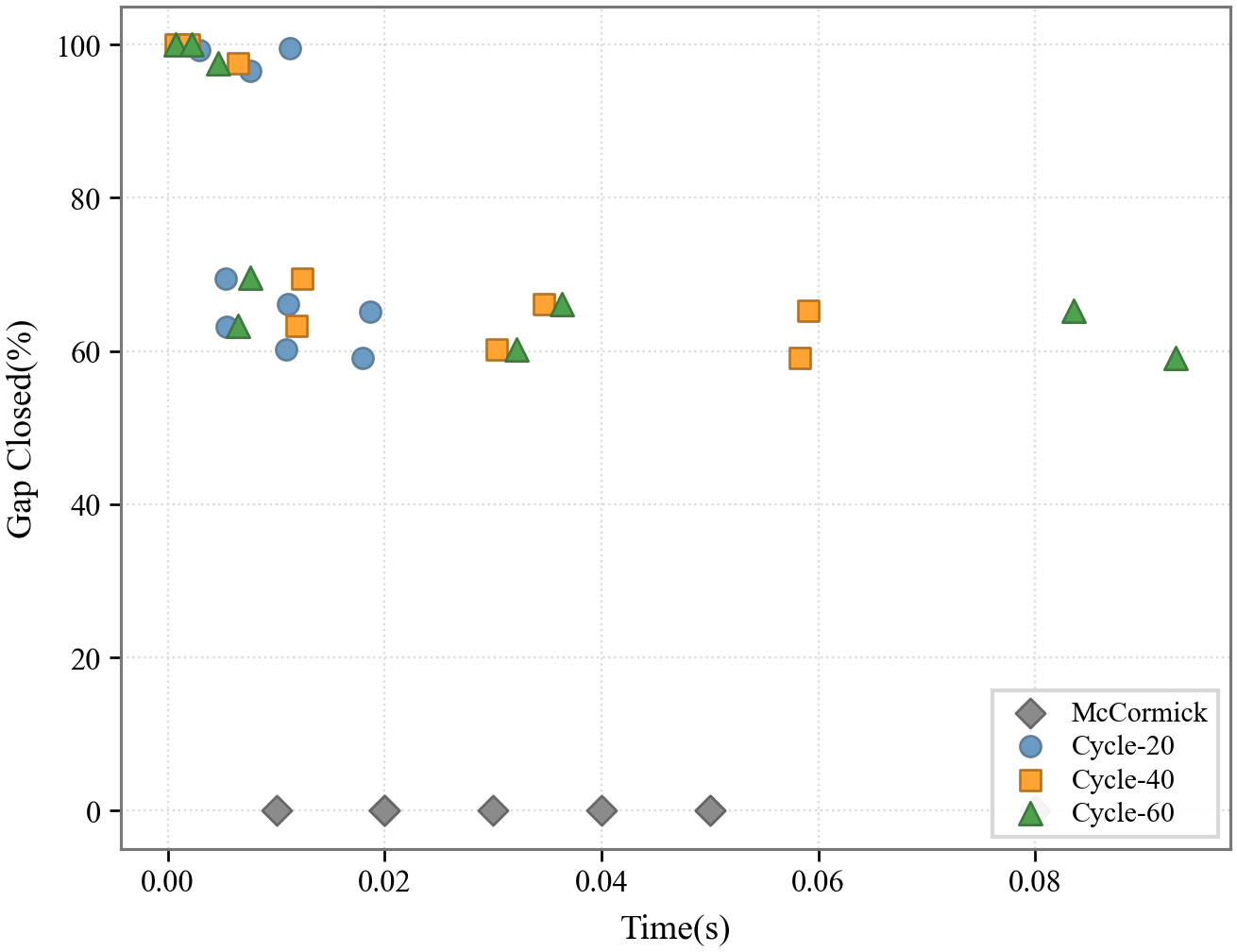}
\caption{Comparison of different methods on complete graphs}
\label{fig:complete-graph}
\end{minipage}
\hfill
\begin{minipage}[c]{0.44\textwidth}
\centering
\scriptsize

\begin{tabular*}{\linewidth}{@{\extracolsep{\fill}}ccccc@{}}
\toprule
\multirow{2}{*}{$n$} & \multirow{2}{*}{$\theta$} & \multicolumn{3}{c}{SepTime(s)} \\
\cmidrule(lr){3-5}
& & Cycle-20 & Cycle-40 & Cycle-60 \\
\midrule
\multirow{3}{*}{60} & 0.4 & 0.118 & 0.184 & 0.236 \\
 & 0.5 & 0.116 & 0.390 & 0.680 \\
 & 0.6 & 0.121 & 0.391 & 0.673 \\
\multirow{3}{*}{80} & 0.4 & 0.222 & 0.491 & 0.708 \\
 & 0.5 & 0.252 & 0.909 & 1.800 \\
 & 0.6 & 0.244 & 0.843 & 1.673 \\
\multirow{3}{*}{100} & 0.4 & 0.422 & 0.774 & 1.086 \\
 & 0.5 & 0.476 & 1.738 & 3.626 \\
 & 0.6 & 0.459 & 1.673 & 3.491 \\
\bottomrule
\end{tabular*}
\captionof{table}{Separation time for complete graphs with $T\in \{20,40,60\}$ rounds.}
\label{tab:sep-com}
\end{minipage}
\end{figure}

\begin{table}[htbp]
    \centering
    \begin{tabular*}{0.95\textwidth}{@{\extracolsep{\fill}}cc c cc cccc@{}}
    \toprule
    \multirow{2}{*}{$n$} & \multirow{2}{*}{$\theta$} & \multirow{2}{*}{Opt} & \multicolumn{2}{c}{McCormick} & \multicolumn{4}{c}{Cycle-20} \\
    \cmidrule(lr){4-5} \cmidrule(lr){6-9}
    & & & Bound & Time(s) & Bound & Gap closed(\%) & Time(s) & \#cuts \\
    \midrule
    \multirow{3}{*}{60} & 0.4 & -399.0 & -540.2 & 0.01 & -494.2 & 32.6 & 0.00 & 613.3 \\
     & 0.5 & -195.4 & -449.4 & 0.01 & -381.8 & 26.6 & 0.00 & 609.3 \\
     & 0.6 & -98.4 & -366.5 & 0.01 & -301.1 & 24.4 & 0.00 & 604.3 \\
    \multirow{3}{*}{80} & 0.4 & -693.2 & -964.4 & 0.02 & -909.6 & 20.2 & 0.00 & 818.3 \\
     & 0.5 & -290.4 & -798.8 & 0.02 & -703.2 & 18.8 & 0.00 & 810.1 \\
     & 0.6 & -139.6 & -635.0 & 0.02 & -537.5 & 19.7 & 0.00 & 802.3 \\
    \multirow{3}{*}{100} & 0.4 & -1041.8 & -1497.0 & 0.03 & -1420.7 & 16.8 & 0.01 & 1019.7 \\
     & 0.5 & -410.2 & -1249.8 & 0.03 & -1131.2 & 14.1 & 0.01 & 1013.9 \\
     & 0.6 & -183.8 & -1006.0 & 0.03 & -875.4 & 15.9 & 0.01 & 1007.7 \\
    \bottomrule
    \end{tabular*}
    \caption{Comparison of \eqref{eq:mcRela} and \textsc{Cycle}-20 for bipartite graphs}
    \label{tab:bip-1}
\end{table}

\begin{figure}[htbp]
\centering

\begin{minipage}[c]{0.5\textwidth}
\centering
\includegraphics[width=0.88\linewidth]{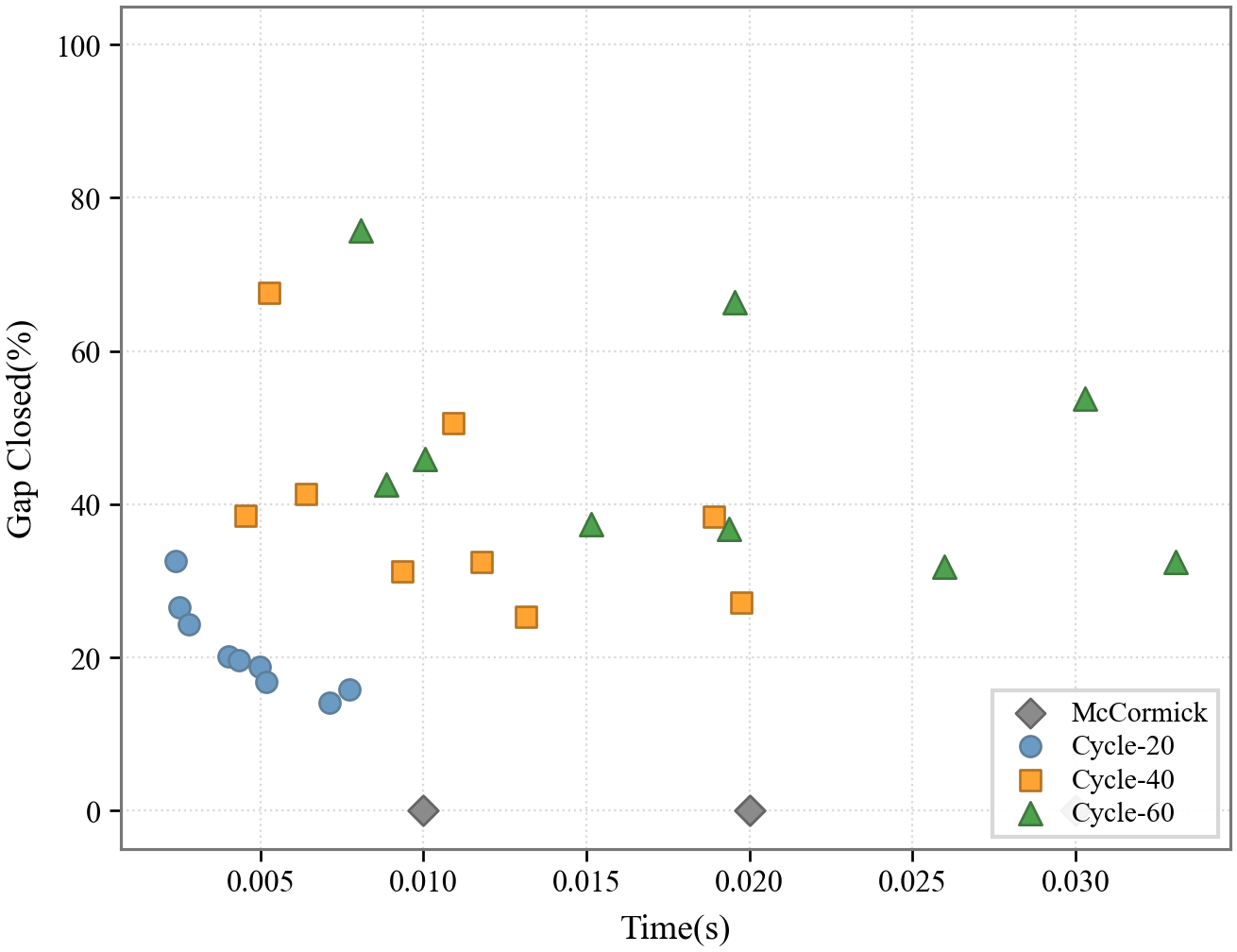}
\caption{Comparison of different methods on bipartite graphs}
\label{fig:bipartite-graph}
\end{minipage}
\hfill
\begin{minipage}[c]{0.44\textwidth}
\centering
\scriptsize

\begin{tabular*}{\linewidth}{@{\extracolsep{\fill}}ccccc@{}}
\toprule
\multirow{2}{*}{$n$} & \multirow{2}{*}{$\theta$} & \multicolumn{3}{c}{SepTime(s)} \\
\cmidrule(lr){3-5}
& & Cycle-20 & Cycle-40 & Cycle-60 \\
\midrule
\multirow{3}{*}{60} & 0.4 & 0.359 & 0.782 & 1.270 \\
 & 0.5 & 0.359 & 0.809 & 1.321 \\
 & 0.6 & 0.356 & 0.782 & 1.285 \\
\multirow{3}{*}{80} & 0.4 & 1.117 & 2.383 & 3.803 \\
 & 0.5 & 1.140 & 2.436 & 3.926 \\
 & 0.6 & 1.131 & 2.397 & 3.797 \\
\multirow{3}{*}{100} & 0.4 & 2.719 & 5.725 & 9.130 \\
 & 0.5 & 2.748 & 5.835 & 9.352 \\
 & 0.6 & 2.760 & 5.741 & 9.092 \\
\bottomrule
\end{tabular*}
\captionof{table}{Separation time for bipartite graphs with $T\in \{20,40,60\}$ rounds.}
\label{tab:sep-bip}
\end{minipage}
\end{figure}

\begin{table}[htbp]
    \centering
    \begin{tabular*}{0.95\textwidth}{@{\extracolsep{\fill}}cc c cc cccc@{}}
    \toprule
    \multirow{2}{*}{$n$} & \multirow{2}{*}{$\theta$} & \multirow{2}{*}{Opt} & \multicolumn{2}{c}{McCormick} & \multicolumn{4}{c}{Cycle-20} \\
    \cmidrule(lr){4-5} \cmidrule(lr){6-9}
    & & & Bound & Time(s) & Bound & Gap closed(\%) & Time(s) & \#cuts \\
    \midrule
    \multirow{1}{*}{60} & 0.5 & -148.0 & -868.0 & 0.01 & -444.2 & 58.9 & 0.01 & 721.0 \\
    \multirow{1}{*}{80} & 0.5 & -244.0 & -1536.0 & 0.02 & -769.7 & 59.3 & 0.01 & 907.0 \\
    \multirow{1}{*}{100} & 0.5 & -346.0 & -2450.0 & 0.04 & -1235.7 & 57.7 & 0.02 & 1084.0 \\
    \bottomrule
    \end{tabular*}
    \caption{Comparison of \eqref{eq:mcRela} and \textsc{Cycle}-20 for complete graphs with Hadamard Matrix}
    \label{tab:had-1}
\end{table}

\begin{figure}[htbp]
\centering

\begin{minipage}[c]{0.5\textwidth}
\centering
\includegraphics[width=0.88\linewidth]{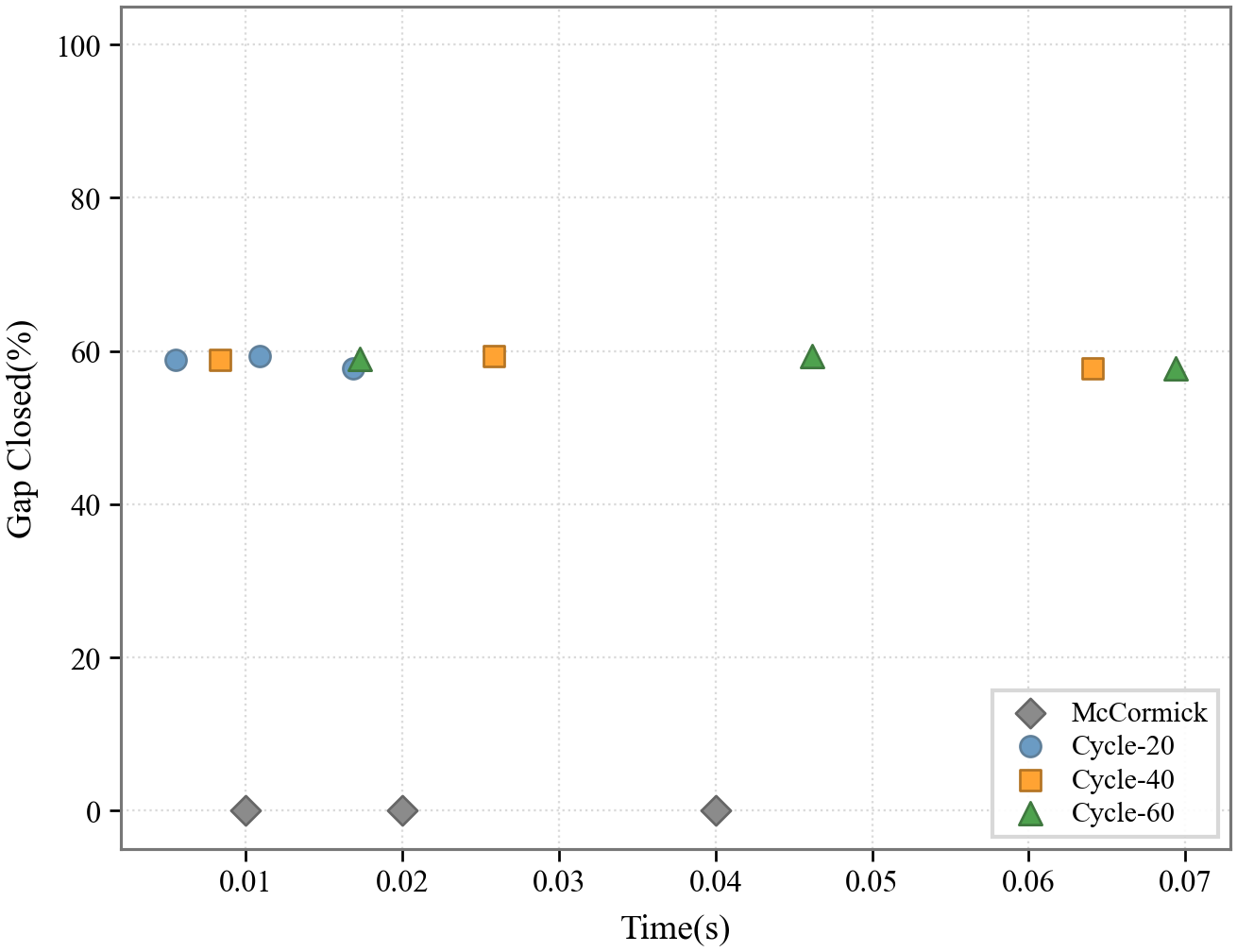}
\caption{Comparison of different methods on Hadamard matrix}
\label{fig:Hadamard}
\end{minipage}
\hfill
\begin{minipage}[c]{0.44\textwidth}
\centering
\scriptsize

\begin{tabular*}{\linewidth}{@{\extracolsep{\fill}}ccccc@{}}
\toprule
\multirow{2}{*}{$n$} & \multirow{2}{*}{$\theta$} & \multicolumn{3}{c}{SepTime(s)} \\
\cmidrule(lr){3-5}
& & Cycle-20 & Cycle-40 & Cycle-60 \\
\midrule
\multirow{1}{*}{60} & 0.5 & 0.123 & 0.420 & 0.753 \\
\multirow{1}{*}{80} & 0.5 & 0.240 & 0.901 & 1.783 \\
\multirow{1}{*}{100} & 0.5 & 0.444 & 1.598 & 3.664 \\
\bottomrule
\end{tabular*}
\captionof{table}{Separation time for complete graphs with Hadamard Matrix, where $T\in \{20,40,60\}$ rounds.}
\label{tab:sep-had}
\end{minipage}
\end{figure}

For the sparse graph classes (Tables~\ref{tab:cac}--\ref{tab:halin}), we include all cut types and compare \eqref{eq:mcRela} with \textsc{Cycle}-10 where maximum cycle length $l$ is chosen to be $3$ and $5$ respectively. Cycle inequalities of length 3 generate weak cuts for sparse graphs, often closing less than $60\%$ of the gap, because these graphs contain long chordless cycles. Increasing the maximum cycle length to 5 substantially improves relaxation quality, closing up to $98\%$ of the gap. The \textsc{Cycle}-10 formulation remains significantly faster than the McCormick relaxation, with solution times often below 0.004 seconds that are rounded to 0.00 in the tables. The cutting-plane procedure converges within 10 iterations for most sparse graphs, making the overall separation overhead negligible. This overhead is, therefore, not reported.

\begin{table}[htbp]
    \centering
    \resizebox{\textwidth}{!}{
    \begin{tabular}{cc c cc cccc cccc}
    \toprule
    \multirow{2}{*}{$n$} & \multirow{2}{*}{$\theta$} & \multirow{2}{*}{Opt} & \multicolumn{2}{c}{McCormick} & \multicolumn{4}{c}{Cycle-10 ($l=3$)} & \multicolumn{4}{c}{Cycle-10 ($l=5$)} \\
    \cmidrule(lr){4-5} \cmidrule(lr){6-9} \cmidrule(lr){10-13}
    & & & Bound & Time(s) & Bound & Gap closed(\%) & Time(s) & \#cut & Bound & Gap closed(\%) & Time(s) & \#cuts \\
    \midrule
    \multirow{3}{*}{1000} & 0.4 & -1130.0 & -1131.8 & 0.01 & -1131.0 & 44.4 & 0.00 & 278.4 & -1130.5 & 72.2 & 0.00 & 547.7 \\
     & 0.5 & -878.0 & -881.5 & 0.01 & -879.6 & 54.3 & 0.00 & 283.9 & -878.8 & 77.1 & 0.00 & 463.3 \\
     & 0.6 & -704.4 & -707.2 & 0.01 & -705.7 & 53.6 & 0.00 & 277.5 & -704.8 & 85.7 & 0.00 & 444.8 \\
    \multirow{3}{*}{1500} & 0.4 & -1690.2 & -1692.4 & 0.01 & -1691.1 & 59.1 & 0.00 & 374.6 & -1690.6 & 81.8 & 0.00 & 719.8 \\
     & 0.5 & -1344.0 & -1347.5 & 0.02 & -1345.4 & 60.0 & 0.00 & 399.7 & -1344.8 & 77.1 & 0.00 & 677.9 \\
     & 0.6 & -1076.8 & -1080.1 & 0.01 & -1078.4 & 51.5 & 0.00 & 407.8 & -1077.2 & 87.9 & 0.00 & 670.3 \\
    \multirow{3}{*}{2000} & 0.4 & -2244.2 & -2247.2 & 0.01 & -2245.4 & 60.0 & 0.00 & 523.2 & -2244.6 & 86.7 & 0.00 & 998.7 \\
     & 0.5 & -1811.8 & -1815.8 & 0.01 & -1813.1 & 67.5 & 0.00 & 538.9 & -1812.2 & 90.0 & 0.00 & 947.7 \\
     & 0.6 & -1414.0 & -1419.0 & 0.02 & -1416.0 & 60.0 & 0.00 & 559.2 & -1414.6 & 88.0 & 0.00 & 889.7 \\
    \bottomrule
    \end{tabular}
    }
    \caption{Comparison of~\eqref{eq:mcRela}, \textsc{Cycle}-10 derived from triangles, and \textsc{Cycle}-10 derived from cycles of length $5$ on cactus graph}
    \label{tab:cac}
\end{table}

\begin{table}[htbp]
    \centering
    \resizebox{\textwidth}{!}{
    \begin{tabular}{cc c cc cccc cccc}
    \toprule
    \multirow{2}{*}{$n$} & \multirow{2}{*}{$\theta$} & \multirow{2}{*}{Opt} & \multicolumn{2}{c}{McCormick} & \multicolumn{4}{c}{Cycle-10 ($l=3$)} & \multicolumn{4}{c}{Cycle-10 ($l=5$)} \\
    \cmidrule(lr){4-5} \cmidrule(lr){6-9} \cmidrule(lr){10-13}
    & & & Bound & Time(s) & Bound & Gap closed(\%) & Time(s) & \#cut & Bound & Gap closed(\%) & Time(s) & \#cuts \\
    \midrule
    \multirow{3}{*}{1000} & 0.4 & -1215.4 & -1218.3 & 0.01 & -1216.1 & 75.9 & 0.00 & 780.3 & -1215.5 & 96.6 & 0.00 & 2507.5 \\
     & 0.5 & -960.6 & -964.2 & 0.01 & -962.1 & 59.7 & 0.00 & 759.4 & -960.7 & 98.1 & 0.00 & 2173.2 \\
     & 0.6 & -737.2 & -743.6 & 0.01 & -740.4 & 49.6 & 0.00 & 767.1 & -738.0 & 87.6 & 0.00 & 1995.8 \\
    \multirow{3}{*}{1500} & 0.4 & -1837.2 & -1842.0 & 0.01 & -1839.0 & 63.5 & 0.00 & 1173.1 & -1837.3 & 97.9 & 0.00 & 3779.8 \\
     & 0.5 & -1428.8 & -1436.3 & 0.01 & -1432.3 & 53.9 & 0.00 & 1171.1 & -1429.4 & 92.5 & 0.00 & 3324.5 \\
     & 0.6 & -1098.4 & -1106.8 & 0.02 & -1102.4 & 51.8 & 0.00 & 1221.7 & -1098.9 & 94.2 & 0.00 & 3344.0 \\
    \multirow{3}{*}{2000} & 0.4 & -2482.0 & -2486.8 & 0.02 & -2483.7 & 64.6 & 0.00 & 1583.3 & -2482.5 & 90.5 & 0.00 & 5193.5 \\
     & 0.5 & -1917.6 & -1924.5 & 0.02 & -1920.4 & 59.0 & 0.00 & 1534.8 & -1917.8 & 96.9 & 0.00 & 4480.0 \\
     & 0.6 & -1492.6 & -1505.9 & 0.02 & -1497.1 & 65.9 & 0.00 & 1565.9 & -1493.7 & 91.8 & 0.00 & 4295.0 \\
    \bottomrule
    \end{tabular}
    }
    \caption{Comparison of~\eqref{eq:mcRela}, \textsc{Cycle}-10 derived from triangles, and \textsc{Cycle}-10 derived from cycles of length $5$ on cactus graph with one chord}
    \label{tab:cac_one}
\end{table}

\begin{table}[htbp]
    \centering
    \resizebox{\textwidth}{!}{
    \begin{tabular}{cc c cc cccc cccc}
    \toprule
    \multirow{2}{*}{$n$} & \multirow{2}{*}{$\theta$} & \multirow{2}{*}{Opt} & \multicolumn{2}{c}{McCormick} & \multicolumn{4}{c}{Cycle-10 ($l=3$)} & \multicolumn{4}{c}{Cycle-10 ($l=5$)} \\
    \cmidrule(lr){4-5} \cmidrule(lr){6-9} \cmidrule(lr){10-13}
    & & & Bound & Time(s) & Bound & Gap closed(\%) & Time(s) & \#cut & Bound & Gap closed(\%) & Time(s) & \#cuts \\
    \midrule
    \multirow{3}{*}{1000} & 0.4 & -1476.2 & -1490.4 & 0.01 & -1477.4 & 91.7 & 0.00 & 2154.0 & -1476.8 & 95.9 & 0.00 & 7400.0 \\
     & 0.5 & -1106.2 & -1133.6 & 0.02 & -1108.4 & 92.1 & 0.00 & 2351.5 & -1107.1 & 96.5 & 0.01 & 7263.2 \\
     & 0.6 & -831.0 & -872.2 & 0.01 & -835.6 & 88.8 & 0.00 & 2559.6 & -833.2 & 94.7 & 0.01 & 7457.9 \\
    \multirow{3}{*}{1500} & 0.4 & -2230.8 & -2249.4 & 0.02 & -2232.2 & 92.7 & 0.00 & 3235.9 & -2231.4 & 96.7 & 0.01 & 10952.7 \\
     & 0.5 & -1675.8 & -1719.9 & 0.03 & -1679.7 & 91.2 & 0.01 & 3542.0 & -1678.4 & 94.0 & 0.01 & 10828.3 \\
     & 0.6 & -1255.6 & -1321.2 & 0.02 & -1262.4 & 89.7 & 0.01 & 3839.9 & -1259.2 & 94.6 & 0.01 & 11339.1 \\
    \multirow{3}{*}{2000} & 0.4 & -2952.8 & -2979.1 & 0.04 & -2954.3 & 94.4 & 0.00 & 4158.5 & -2953.8 & 96.4 & 0.01 & 14656.4 \\
     & 0.5 & -2262.8 & -2314.8 & 0.04 & -2269.1 & 87.9 & 0.01 & 4643.5 & -2265.8 & 94.3 & 0.01 & 14548.8 \\
     & 0.6 & -1654.2 & -1745.7 & 0.04 & -1665.4 & 87.8 & 0.01 & 5201.1 & -1661.1 & 92.5 & 0.02 & 15530.9 \\
    \bottomrule
    \end{tabular}
    }
    \caption{Comparison of~\eqref{eq:mcRela}, \textsc{Cycle}-10 derived from triangles, and \textsc{Cycle}-10 derived from cycles of length $5$ on cactus graph with two chords}
    \label{tab:cac_two}
\end{table}

\begin{table}[htbp]
    \centering
    \resizebox{\textwidth}{!}{
    \begin{tabular}{cc c cc cccc cccc}
    \toprule
    \multirow{2}{*}{$n$} & \multirow{2}{*}{$\theta$} & \multirow{2}{*}{Opt} & \multicolumn{2}{c}{McCormick} & \multicolumn{4}{c}{Cycle-10 ($l=3$)} & \multicolumn{4}{c}{Cycle-10 ($l=5$)} \\
    \cmidrule(lr){4-5} \cmidrule(lr){6-9} \cmidrule(lr){10-13}
    & & & Bound & Time(s) & Bound & Gap closed(\%) & Time(s) & \#cut & Bound & Gap closed(\%) & Time(s) & \#cuts \\
    \midrule
    \multirow{3}{*}{1000} & 0.4 & -1367.8 & -1375.5 & 0.01 & -1368.6 & 89.6 & 0.00 & 769.1 & -1368.2 & 94.8 & 0.00 & 2245.5 \\
     & 0.5 & -1075.0 & -1086.9 & 0.01 & -1076.2 & 89.6 & 0.00 & 821.7 & -1075.3 & 97.5 & 0.00 & 2056.9 \\
     & 0.6 & -817.8 & -832.1 & 0.01 & -820.1 & 83.9 & 0.00 & 818.1 & -818.4 & 95.8 & 0.00 & 1913.0 \\
    \multirow{3}{*}{1500} & 0.4 & -2041.2 & -2051.8 & 0.02 & -2042.5 & 87.7 & 0.00 & 1104.6 & -2041.7 & 95.3 & 0.00 & 3329.2 \\
     & 0.5 & -1602.8 & -1621.3 & 0.02 & -1604.9 & 88.8 & 0.00 & 1212.5 & -1603.1 & 98.4 & 0.00 & 3069.4 \\
     & 0.6 & -1227.0 & -1247.0 & 0.02 & -1230.2 & 83.8 & 0.00 & 1214.1 & -1227.9 & 95.4 & 0.00 & 2863.6 \\
    \multirow{3}{*}{2000} & 0.4 & -2780.8 & -2794.3 & 0.03 & -2782.4 & 88.1 & 0.00 & 1506.3 & -2781.2 & 97.0 & 0.00 & 4518.9 \\
     & 0.5 & -2129.2 & -2153.0 & 0.03 & -2131.9 & 88.7 & 0.00 & 1627.4 & -2130.0 & 96.8 & 0.00 & 4147.6 \\
     & 0.6 & -1639.4 & -1670.4 & 0.03 & -1644.2 & 84.5 & 0.00 & 1669.4 & -1641.1 & 94.5 & 0.00 & 3984.0 \\
    \bottomrule
    \end{tabular}
    }
    \caption{Comparison of~\eqref{eq:mcRela}, \textsc{Cycle}-10 derived from triangles, and \textsc{Cycle}-10 derived from cycles of length $5$ on Halin graph}
    \label{tab:halin}
\end{table}

\section{Conclusion}
We introduced disjunctive submodularity to exploit submodular structure over subdomains of the 0-1 hypercube. Our developments rely on a key equivalence between facially submodular functions and pointwise minimum of submodular functions and a lifting argument. Using these, we developed a strongly polynomial separation algorithm to separate convex and sublinear envelopes of disjunctive submodular functions consisting of two disjunctions. We derived closed-form expressions in several settings. These include the the case when one of the functions is linear and the convex hull characterization of a 0-1 bilinear function defined over a cycle interaction graph. The latter was based on a polynomial-sized LP formulation to separate the minimum of $K$ submodular functions when individual functions can be separated using polynomial-sized LP formulations. We applied our advances to a multi-product inventory problem to recover and extend a recent result and showed computationally that our cycle inequalities close significant gap left by the McCormick relaxation.

Several directions are particularly promising for further exploration. First, a systematic structural exploration of typical disjunctive submodular function structures beyond 0-1 bilinear programming and inventory optimization is needed. Second, algorithms are required to automatically identify such structures when they occur in combinatorial or nonlinear optimization settings. Third, efficient implementations of weighted polymatroid intersection algorithm, our lifting, and extension techniques used to derive separation inequalities within a branch-and-bound framework could help assess the benefit of these tighter relaxations for broader classes of optimization problems.


\section{Declarations}
\textbf{Financial interest} The authors did not receive support from any organization for the submitted work.

\noindent \textbf{Competing Interests} The authors have no competing interests, conflict of interest, or non-financial interests to declare that are relevant to the content of the article.

\begin{appendices}
\section{Proof of Theorem~\ref{them:extension-lattice}}\label{proof:theo-extension}
We first establish some basic properties of the retraction operator.
\begin{lemma}\label{lem:retraction-properties}
Let $X$ be a lattice family given by index sets $I^0, I^1$ and a relation $D$. Let $\rho:\{0,1\}^n \to X$  be the retraction operator associated with $X$ defined as in~\eqref{eq:retraction}. Then, $\rho$ satisfies the following properties:
\begin{enumerate}
    \item \label{property:v-1} $\rho(x) \in X$ for all $x \in \{0,1\}^n$.
    \item \label{property:v-2} $\rho(a \vee b) = \rho(a) \vee \rho(b)$ for all $a,b \in \{0,1\}^n$.
    \item \label{property:v-3} $\rho(a \wedge b) \le \rho(a) \wedge \rho(b)$ for all $a,b \in \{0,1\}^n$.
    \item \label{property:v-4} $\rho(x) = x$ for all $x \in X$.
\end{enumerate}
\end{lemma}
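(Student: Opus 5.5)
The plan is to decompose $\rho$ into its three constituent maps, $x\mapsto x\wedge\top$, the closure $c$, and $y\mapsto y\vee\bot$, and to verify each property map by map. I will use the standing convention after \eqref{eq:lattice-rep}: $(i,j)\in D$ with $i\in I^0$ forces $j\in I^0$, and $(i,j)\in D$ with $j\in I^1$ forces $i\in I^1$. Together with reflexivity and transitivity of $D$, this is all that should be needed.

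For property~\ref{property:v-1}, let $y=c(x\wedge\top)$ and $z=y\vee\bot$. I check the three families of constraints in turn.
\begin{itemize}
\item \emph{Fixed zeros.} If $i\in I^0$, then every $j$ with $(i,j)\in D$ lies in $I^0$, so $(x\wedge\top)_j=0$ and hence $y_i=0$. Moreover $\bot_i=0$, since $I^0\cap I^1=\emptyset$, which holds because $X$ is nonempty.
\item \emph{Fixed ones.} These hold trivially after the join with $\bot$.
\item \emph{Precedences.} For $(i,j)\in D$, transitivity gives $\{k:(j,k)\in D\}\subseteq\{k:(i,k)\in D\}$, so $y_i\ge y_j$. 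After the join, if $j\in I^1$ then $i\in I^1$, so $z_i=1$; otherwise $z_j=y_j\le y_i\le z_i$.
\end{itemize}

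For property~\ref{property:v-4}, take $x\in X$. Then $x\le\top$, and $c(x)=x$: reflexivity gives $c(x)\ge x$, and the precedence constraints give $c(x)_i=\max_{(i,j)\in D}x_j\le x_i$. Finally $x\ge\bot$, so the join with $\bot$ leaves $x$ unchanged.

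For properties~\ref{property:v-2} and~\ref{property:v-3}, the observation is that each of the three maps distributes over $\vee$. For $\wedge\top$ and $\vee\bot$ this is the distributive law of the Boolean lattice. For $c$, each coordinate is a maximum over a fixed index set, and maxima commute: $\max_j(a_j\vee b_j)=\max_j a_j\vee\max_j b_j$. Composing the three maps gives property~\ref{property:v-2}. Each map is also monotone, so $\rho$ is monotone, and applying $\rho$ to $a\wedge b\le a$ and to $a\wedge b\le b$ yields property~\ref{property:v-3}. Equality cannot be expected in property~\ref{property:v-3}, since $c$ does not preserve meets.

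The only step requiring genuine care is the precedence check in property~\ref{property:v-1}, because it is where the convention on $I^0$, $I^1$ and $D$ is essential; the remaining steps are routine lattice identities.
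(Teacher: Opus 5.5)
Your proposal is correct and follows essentially the same route as the paper: the same constraint-by-constraint check for property~1, where your case split on $j\in I^1$ plays the role of the paper's observation that $\bot\in X$, i.e.\ $\bot_i\ge\bot_j$. Property~2 uses the same identity $c(u\vee v)=c(u)\vee c(v)$ and property~4 the same verification. The only real difference is property~3: you deduce $\rho(a\wedge b)\le\rho(a)\wedge\rho(b)$ from monotonicity of $\rho$, whereas the paper computes it directly from $c(u\wedge v)\le c(u)\wedge c(v)$ and then distributes $\vee\bot$ over $\wedge$.
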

 
\begin{proof}
We first prove property~\eqref{property:v-1}. Consider any $x \in \{0,1\}^n$. To show $\rho(x) \in X$, we verify that $\rho(x)$ satisfies the zero constraints on $I^0$, the one constraints on $I^1$, and the relation constraints on $D$. For every $i\in I^0$, we have $(x\wedge\top)_i=0$. Since every successor
of a vertex in $I^0$ also belongs to $I^0$, it follows that
$(x\wedge\top)_j=0$ for every $(i,j)\in D$. Hence, $(c(x\wedge\top))_i =
\max_{j:(i,j)\in D}(x\wedge\top)_j = 0$.  Because $\bot_i=0$, we obtain $\rho(x)_i =
(c(x\wedge\top))_i\vee\bot_i = 0$. Next, let $i\in I^1$. Since $\bot_i=1$, it follows readily that $\rho(x)_i =(c(x\wedge\top))_i\vee\bot_i = 1$.  Finally, let $(i,j)\in D$. Since $D$ is transitive, $\{t:(j,t)\in D\}\subseteq\{t:(i,t)\in D\}$, and therefore
\[
(c(x\wedge\top))_i
=
\max_{t:(i,t)\in D}(x\wedge\top)_t
\ge
\max_{t:(j,t)\in D}(x\wedge\top)_t
=
(c(x\wedge\top))_j.
\]
Moreover, $\bot\in X$, so $\bot_i\ge\bot_j$.  Hence
$ \rho(x)_i = (c(x\wedge\top))_i\vee\bot_i \ge (c(x\wedge\top))_j\vee\bot_j = \rho(x)_j$. 
Thus $\rho(x)\in X$.

We next prove property~\eqref{property:v-2}. Let
$a,b\in\{0,1\}^n$. Then
\begin{align*}
\rho(a\vee b)
&=
c\bigl((a\vee b)\wedge\top\bigr)\vee\bot\\
&=
c\bigl((a\wedge\top)\vee(b\wedge\top)\bigr)\vee\bot\\
&=
\bigl(c(a\wedge\top)\vee c(b\wedge\top)\bigr)\vee\bot\\
&=
\rho(a)\vee\rho(b),
\end{align*}
where the third equality follows because for each $i \in [n]$, $\bigl(c(u\vee v) \bigr)_i = \max_{j:(i,j)\in D}(u_j\vee v_j) = (c(u))_i\vee(c(v))_i$.

For property~\eqref{property:v-3}, we similarly obtain
\begin{align*}
\rho(a\wedge b)
&=
c\bigl((a\wedge b)\wedge\top\bigr)\vee\bot\\
&\le
\bigl(c(a\wedge\top)\wedge c(b\wedge\top)\bigr)\vee\bot\\
&=
\bigl(c(a\wedge\top)\vee\bot\bigr)
\wedge
\bigl(c(b\wedge\top)\vee\bot\bigr)\\
&=
\rho(a)\wedge\rho(b),
\end{align*}
where the inequality follows from $\bigl(c(u\wedge v)\bigr)_i =
\max_{j:(i,j)\in D}(u_j\wedge v_j)
\le
\bigl(c(u)\bigr)_i\wedge \bigl(c(v)\bigr)_i$.

Finally, let $x\in X$. Since $x_i=0$ for every $i\in I^0$, it follows readily that $x\wedge\top=x$.  Moreover, $c(x) = x$ as we have 
\[
(c(x))_i = \max_{j:(i,j)\in D}x_j = x_i,
\]
where reflexivity gives $(i,i)\in D$ and hence
$(c(x))_i\ge x_i$, while the precedence constraints imply $(c(x))_i\le x_i$. Last,  since $x_i=1$ for every $i\in I^1$, $x\vee\bot=x$. Therefore, $\rho(x) = c(x\wedge\top)\vee\bot = c(x)\vee\bot = x$. 
This proves property~\eqref{property:v-4}. 
\end{proof}

\begin{proof}[Proof of Theorem~\ref{them:extension-lattice}] 
First, we prove that $f(x) = \min_{k\in[K]}f_k(x)$ for all $x \in X$. For any $k \in [K]$ and $x \in X$, if $\rho_k(x) = x$ then $f_k(x) = f(x)$. Otherwise, 
\[
f_k(x)-f(x) = f(\rho_k(x))-f(x) + C\|x-\rho_k(x)\|_1 \ge -C + C\|x-\rho_k(x)\|_1 \ge 0,
\]
where the first inequality follows from the fact that $\rho_k(x)\in X_k \subseteq X$ and $f(\rho_k(x))-f(x)\ge f^L-f^U \geq -C$, and the second inequality holds because $x$ and $\rho_k(x)$ are binary vectors, and $\rho_k(x) \ne x$ strictly implies $\|x-\rho_k(x)\|_1\ge 1$. Conversely, since $X = \bigcup_{k\in[K]}X_k$,  for every $x\in X$ there exists some index $k^*$ such that $x\in X_{k^*}$. Therefore, we obtain
\[
f_{k^*}(x) = f(\rho_{k^*}(x)) + C\|x-\rho_{k^*}(x)\|_1 = f(x),
\]
where the second equality holds since the property~\eqref{property:v-4} of Lemma~\ref{lem:retraction-properties} implies that $\rho_{k^*}(x)=x$. 
Hence, $\min_{k\in[K]}f_k(x) \le f_{k^*}(x) = f(x)$. This completes the proof.

Next, we prove $f_k$ is submodular on $X$. The proof consists of two steps. 
\medskip

\noindent\textbf{Step 1. Submodularity on $X_k^\downarrow$.} 
Let $X_k^\downarrow:=\{x \in X \mid x \le \top_k\}$, and let $x,y\in X_k^\downarrow$. Since $x,y\le\top_k$, the retraction can be simplified, that is, $\rho_k(x)=c_k(x)\vee\bot_k\ge x$ and $\rho_k(y)=c_k(y)\vee\bot_k\ge y$. Using the submodularity of $f$ on $X_k$, we obtain 
\begin{equation}\label{eq:step1-main}
\begin{aligned}
f\bigl(\rho_k(x)\bigr)+f\bigl(\rho_k(y)\bigr) &\geq f\bigl(\rho_k(x) \wedge \rho_k(y) \bigr) +  f\bigl(\rho_k(x) \vee \rho_k(y) \bigr) = f\bigl(\rho_k(x) \wedge \rho_k(y) \bigr) +  f\bigl(\rho_k(x \vee  y) \bigr),  \\
& \geq f(\rho_k(x\wedge y))
+f(\rho_k(x\vee y))
-C\|\rho_k(x)\wedge\rho_k(y)-\rho_k(x\wedge y)\|_1,
\end{aligned}
\end{equation}
where the first inequality holds by using the property~\eqref{property:v-1} of Lemma~\ref{lem:retraction-properties}, the equality follows from the property~\eqref{property:v-2} of Lemma~\ref{lem:retraction-properties}, and the second inequality holds since $f(a) - f(b) \leq C\|a-b\|_1$ for $a,b \in X$. 
It remains to handle the penalty terms. Observe that 
\[
\begin{aligned}
\|\rho_k(x)-x\|_1+\|\rho_k(y)-y\|_1
&= \sum_{i\in[n]}(\rho_k(x)_i-x_i)
+
\sum_{i\in[n]}(\rho_k(y)_i-y_i) \\
& = \sum_{i\in[n]}\bigl(\rho_k(x)_i \wedge \rho_k(y)_i -x_i \wedge y_i\bigr) + \sum_{i\in[n]}\bigl(\rho_k(x)_i \vee \rho_k(y)_i -x_i \vee y_i\bigr) \\
&=
\|(\rho_k(x)\wedge\rho_k(y))-(x\wedge y)\|_1
+
\|(\rho_k(x)\vee\rho_k(y))-(x\vee y)\|_1,
\end{aligned}
\]
where the first and third equalities follow from $\rho_k(x)\ge x$ and $\rho_k(y)\ge y$, and the second equality follows from the identity $(a-c)+(b-d) = (a\wedge b-c\wedge d) + (a\vee b-c\vee d)$ for real values $a,b,c,d$. Furthermore, since $x \wedge y \leq \rho_k(x\wedge y)
\le
\rho_k(x)\wedge\rho_k(y)$, we obtain
\[
\|\rho_k(x)\wedge\rho_k(y)-\rho_k(x\wedge y)\|_1 = \|\rho_k(x)\wedge \rho_k(y)-(x\wedge y)\|_1 - \|\rho_k(x\wedge y)-(x\wedge y)\|_1.
\]
Substituting these identities into~\eqref{eq:step1-main}, together with
\(
\rho_k(x)\vee\rho_k(y)=\rho_k(x\vee y),
\)
gives
\[
f_k(x)+f_k(y)
\ge
f_k(x\wedge y)+f_k(x\vee y),
\]
proving that $f_k$ is submodular on $X_k^\downarrow$.

\medskip
\noindent\textbf{Step 2. Submodularity on $X$.} The key observation is the following decomposition: for any $z\in X$,
\begin{equation}
\label{eq:decomp-extended-fun}
\begin{aligned}
f_k(z)
&=f(\rho_k(z))+C\|z-\rho_k(z)\|_1\\
&=f(\rho_k(z\wedge\top_k))+C\|z-\rho_k(z)\|_1\\
&=f(\rho_k(z\wedge\top_k))
+C\|(z\wedge\top_k)-\rho_k(z)\|_1
+C\|z-z\wedge\top_k\|_1\\
&=f_k(z\wedge\top_k)
+C\|z-z\wedge\top_k\|_1,
\end{aligned}
\end{equation}
where the second equality holds by the definition of $\rho_k$, the third equality holds  since the vector
$(z\wedge\top_k)-\rho_k(z)$ is supported only on coordinates where
$(\top_k)_i=1$ whereas the vector
$z-(z\wedge\top_k)$ is supported only on coordinates where
$(\top_k)_i=0$, and the last equality follows from the definition of $f_k$. Now, for $x,y\in X$, we obtain
\begin{align*}
f_k(x)+f_k(y)
&=
f_k(x\wedge\top_k)
+
f_k(y\wedge\top_k)
+
C\|x-x\wedge\top_k\|_1
+
C\|y-y\wedge\top_k\|_1\\
&\ge
f_k\big((x\wedge\top_k)\wedge(y\wedge\top_k)\big)
+
f_k\big((x\wedge\top_k)\vee(y\wedge\top_k)\big)\\
&\qquad
+
C\|x-x\wedge\top_k\|_1
+
C\|y-y\wedge\top_k\|_1\\
&=
f_k\big((x\wedge y)\wedge\top_k\big)
+
f_k\big((x\vee y)\wedge\top_k\big)\\
&\qquad
+
C\|(x\wedge y)-(x\wedge y)\wedge\top_k\|_1
+
C\|(x\vee y)-(x\vee y)\wedge\top_k\|_1\\
&=
f_k(x\wedge y)+f_k(x\vee y),
\end{align*}
where the first and last equalities follow from~\eqref{eq:decomp-extended-fun}, and the inequality follows from Step~1 since
$x\wedge\top_k,y\wedge\top_k\in X_k^\downarrow$. The second equality holds since
\[
\begin{aligned}
\|x-x\wedge\top_k\|_1
+
\|y-y\wedge\top_k\|_1 &=
\sum_{i:(\top_k)_i=0}(x_i+y_i) =
\sum_{i:(\top_k)_i=0}
\bigl((x_i\wedge y_i)+(x_i\vee y_i)\bigr)\\
& = \|(x\wedge y)-(x\wedge y)\wedge\top_k\|_1 + \|(x\vee y)-(x\vee y)\wedge\top_k\|_1.
\end{aligned}
\]
This completes the proof. 
\end{proof}

\section{Proof of Lemma~\ref{lemma:dualsconv}}\label{App:proofs}
\begin{proof}[Proof of Lemma~\ref{lemma:dualsconv}]
The proof consists of two parts. We first establish the conic-combination representation of $\sconv(f)$ and then characterize its closure through linear underestimators of $f$. 

Define
\[
h_1(x)
:=
\inf_{\lambda}
\Bigl\{
\sum_{v\in V}\lambda_v f(v)
\;\Bigm|\;
x=\sum_{v\in V}\lambda_v v,\;
V\subseteq S,\;
\lambda_v\geq 0 \ \for v\in V
\Bigr\}\quad \for x\in \cone(S).
\]
Clearly, $\sconv(f)(x)\leq h_1(x)$ for every $x\in\cone(S)$. Indeed, for any feasible conic representation $x=\sum_{v\in V}\lambda_v v$, 
\[
\sconv(f)(x)
\leq
\sum_{v\in V}\lambda_v\sconv(f)(v)
\leq
\sum_{v\in V}\lambda_v f(v),
\]
where the first inequality follows from subadditivity and positive homogeneity of $\sconv(f)$, and the second inequality follows from $\sconv(f)(v)\leq f(v)$ for $v\in S$. Taking the infimum over all feasible conic representations of $x$ gives $\sconv(f)(x)\leq h_1(x)$.

For the reverse inequality, we show that $h_1$ is a sublinear underestimator of $f$. For positive homogeneity, let $\rho\geq 0$. If $\rho=0$, the zero multipliers give $h_1(\mathbf{0})\leq 0$. On the other hand, by Assumption~\ref{ass:proper}, there exists a linear function $l$ satisfying $l(v)\leq f(v)$ for all $v\in S$. Thus, for any feasible representation $\sum_{v\in V}\lambda_v v=\mathbf{0}$,
\[
\sum_{v\in V}\lambda_v f(v)
\geq
\sum_{v\in V}\lambda_v l(v)
=
l(\mathbf{0})
=
0,
\]
which implies $h_1(\mathbf{0})\geq 0$. Hence, $h_1(\mathbf{0})=0$. If $\rho>0$, scaling the multipliers by $\rho$ gives a one-to-one correspondence between the feasible representations of $x$ and those of $\rho x$, and therefore $h_1(\rho x)=\rho h_1(x)$.

For subadditivity, let $x,y\in\cone(S)$ and consider feasible conic representations
$x=\sum_{u\in U}\mu_u u$ and $y=\sum_{w\in W}\gamma_w w$, where $\mu_u,\gamma_w\geq0$. Combining these two representations gives a feasible conic representation of $x+y$, and therefore
\[
h_1(x+y)
\leq
\sum_{u\in U}\mu_u f(u)
+
\sum_{w\in W}\gamma_w f(w).
\]
Taking the infimum over the two representations yields $h_1(x+y)\leq h_1(x)+h_1(y)$. Together with positive homogeneity, $h_1$ is sublinear on $\cone(S)$. Moreover, $h_1$ is a sublinear underestimator of $f$ since $x=1\cdot x$ is feasible. Thus, $h_1(x)\leq\sconv(f)(x)$ for all $x\in\cone(S)$.

For the second representation, define
\[
\Lambda
:=
\bigl\{
\alpha\in\R^n
\bigm|
\langle\alpha,v\rangle\leq f(v)
\ \for v\in S
\bigr\},
\qquad
h_2(x)
:=
\sup_{\alpha\in\Lambda}\langle\alpha,x\rangle
\quad \for x\in\R^n.
\]
Assumption~\ref{ass:proper} ensures that $\Lambda\neq\emptyset$. Moreover, $h_2$ is closed and sublinear, since it is the pointwise supremum of continuous linear functions on $\R^n$.

We first establish $h_2\leq\cl\sconv(f)$. For every $\alpha\in\Lambda$, the linear function $x\mapsto\langle\alpha,x\rangle$ is a sublinear underestimator of $f$. Hence, $\langle\alpha,x\rangle\leq\sconv(f)(x)$ for all $x\in\cone(S)$. Taking the supremum over $\alpha\in\Lambda$ gives $h_2(x)\leq\sconv(f)(x)$. Since $h_2$ is closed, it follows that $h_2(x)\leq\cl\sconv(f)(x)$ for $x\in \cone(S)$.

For the reverse inequality, we argue by contradiction. Suppose that there exists
$\bar{x}\in\cone(S)$ such that $h_2(\bar{x})<\cl\sconv(f)(\bar{x})$. Choose $\gamma\in\R$ satisfying $h_2(\bar{x})<\gamma<\cl\sconv(f)(\bar{x})$. We will use the separation theorem to construct a vector
$\bar{\alpha}\in\Lambda$ such that
$\gamma<\langle\bar{\alpha},\bar{x}\rangle$, contradicting
$h_2(\bar{x})<\gamma$.

Since $(\bar{x},\gamma)$ lies outside the closed convex cone
$\epi(\cl\sconv(f))$, the separation theorem~\cite[Corollary 11.4.1]{rockafellar1997convex}
implies that there exists a nonzero vector $(a,b)\in\R^n\times\R$ such that
\begin{equation}\label{eq:cone_sep}
\langle a,x\rangle+bs
\geq 0
>
\langle a,\bar{x}\rangle+b\gamma
\quad
\for (x,s)\in\epi(\cl\sconv(f)).
\end{equation}
We first show that $b>0$. Since $(x,s)\in\epi(\cl\sconv(f))$ implies
$(x,s+t)\in\epi(\cl\sconv(f))$ for every $t\geq0$, the left inequality in
\eqref{eq:cone_sep} implies $b\geq0$. If $b=0$, applying the left inequality at
$(\bar{x},\cl\sconv(f)(\bar{x}))$ gives
$\langle a,\bar{x}\rangle\geq0$, whereas the strict inequality in
\eqref{eq:cone_sep} gives $\langle a,\bar{x}\rangle<0$, a contradiction.
Hence, $b>0$.

Set $\bar{\alpha}:=-a/b$. Dividing \eqref{eq:cone_sep} by $b$ gives $\gamma<\langle\bar{\alpha},\bar{x}\rangle$ and $s\geq\langle\bar{\alpha},x\rangle$ for $(x,s)\in\epi(\cl\sconv(f))$. Therefore, $\bar{\alpha}\in\Lambda$ since $\langle\bar{\alpha},x\rangle
\leq\cl\sconv(f)(x)
\leq\sconv(f)(x)
\leq f(x)$ for $x\in S$. Then, we have
\[
\gamma
<
\langle\bar{\alpha},\bar{x}\rangle
\leq
\sup_{\alpha\in\Lambda}\langle\alpha,\bar{x}\rangle
=
h_2(\bar{x}),
\]
which contradicts $h_2(\bar{x})<\gamma$. This completes the proof.

\end{proof}

\section{Proof of Theorem~\ref{theo:extension-recovery}}\label{proof:theo-extension-recover}

The proof of Theorem~\ref{theo:extension-recovery} relies on two lemmas. The first one provides an upper norm bound for linear underestimators of function $f$ on a lattice family domain $X$. This bound is then used to show that the factor $(n+1)^2$ in the definition of the extension function $\bar{F}$ provides sufficient slack to guarantee its preservation property. As argued before Theorem~\ref{theo:extension-recovery}, we may assume throughout the proof that both the lattice family and its lifted domain admit representations by partial orders.

We first present the upper  bound on linear underestimators. Consider a function $f:X \subseteq \{0,1\}^n \to \R$, where $X$ is a lattice family admitting a partial order representation, that is, there exists a partial order $D$ such that $X = \{x \in \{0,1\}^n \mid x_i \geq x_j \for (i,j) \in D\}$.  Its all possible linear underestimators can be modeled as the following polyhedron:
\begin{equation}\label{eq:sublinear-polyhedron}
    Q_f:=\bigl\{\alpha\in\mathbb{R}^n \bigm| \langle\alpha,x\rangle\le f(x)
\ \text{for all }x\in X \bigr\}. 
\end{equation}
For an arbitrary $f$, an extreme point is determined by $n$ linearly independent active constraints, and Cramer's rule, together with the bound on entries of inverses of nonsingular $0$-$1$ matrices in~\cite{alon1997anti}, yields the exponential bound $2n(\sqrt{n}/2)^n\cdot \max\{|f^L|,|f^U|\}$ on the extreme points, where $f^L \leq f(x) \leq f^U$ for $x \in X$. The following lemma shows that the lattice  structure of $X$ improves this exponential bound to a quadratic one.

\begin{lemma}\label{lem:bounded-extreme-point-lattice}
Consider a nonempty  polyhedron $Q_f$ defined as in~\eqref{eq:sublinear-polyhedron}, where $f$ is an arbitrary function defined on a lattice family $X$ with a partial order representation. Let $f^L$ and $f^U$ be two bounds on $f$, that is, $f^L \leq f(x) \leq f^U$ for $x \in X$.  Then, for every extreme point $\alpha^*$ of $Q_f$,
\[
    \|\alpha^*\|_\infty
    \leq\max\bigl\{|f^U|, |f^U-n^2(f^U-f^L)|, n^2(f^U-f^L)\bigr\}.
\]
\end{lemma}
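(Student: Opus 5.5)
The plan is to turn the extreme-point condition into a chain of lattice points on which the linear function $\ell(x):=\langle\alpha^*,x\rangle$ is controlled, and then to read off each coordinate $\alpha^*_i$ as a single increment along that chain. Several steps below are not yet checked; the main gap is flagged explicitly.

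\textbf{Setup.} Fix an extreme point $\alpha^*$ of $Q_f$ and put $\Delta:=f^U-f^L$. Let
\[
T:=\bigl\{x\in X \bigm| \ell(x)=f(x)\bigr\}
\]
be the set of tight points. Since $\alpha^*$ is extreme, $T$ contains $n$ linearly independent vectors, so $T$ spans $\R^n$. Two bounds are available immediately. First, $\ell(x)\le f(x)\le f^U$ for every $x\in X$, since $\alpha^*\in Q_f$. Second, $\ell(x)=f(x)\ge f^L$ for every $x\in T$. Because $D$ is a partial order and there are no fixed coordinates, $\mathbf{0}\in X$, and $\ell(\mathbf{0})=0$ trivially.

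\textbf{Propagating lower bounds.} The only structure available is that $\ell$ is modular, $\ell(u)+\ell(v)=\ell(u\vee v)+\ell(u\wedge v)$, and that $X$ is a lattice, so $u\wedge v,\,u\vee v\in X$ and both satisfy the upper bound $f^U$. Hence, if $\ell(u)\ge A$ and $\ell(v)\ge B$, then
\[
\ell(u\wedge v)\ge A+B-f^U,
\qquad
\ell(u\vee v)\ge A+B-f^U .
\]
I would apply this inside the sublattice $\langle T\rangle\subseteq X$ generated by $T$. Since $\langle T\rangle$ is a distributive lattice of $0$-$1$ vectors, any maximal chain in it, $c_0<c_1<\cdots<c_m$, has successive differences $c_j-c_{j-1}$ that partition the coordinates covered by the chain. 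If some difference had two coordinates $i\neq i'$, then every element of $\langle T\rangle$ would satisfy $x_i=x_{i'}$ on the relevant block. In particular every element of $T$ would satisfy $x_i=x_{i'}$, which contradicts that $T$ spans $\R^n$. A similar argument should force the chain to run from a point with the right coordinates off to a point with them on. So I expect a chain from (essentially) $\mathbf{0}$ to $\mathbf{1}$ whose steps are single unit vectors $e_{\sigma(j)}$. Then $\alpha^*_{\sigma(j)}=\ell(c_j)-\ell(c_{j-1})$.

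\textbf{Main obstacle.} The key step is building this chain so that the lower bound on $\ell(c_j)$ degrades only additively, i.e.\ $\ell(c_j)\ge f^U-jn\Delta$ or similar, rather than doubling at each lattice operation. Naively iterating the inequality above gives exponential decay. What I would try first is to define each chain element as $c_j=\bigvee_{t}\bigl(\bigwedge_{s} t_s\bigr)$ over at most $n$ tight points, chosen greedily one coordinate at a time. I would then bound $\ell$ by telescoping: write $\ell$ of a meet of $k$ tight points as $\sum\ell(t_s)$ minus $\ell$ of $k-1$ intermediate joins, each of which is at most $f^U$. This gives a lower bound of $f^L-(k-1)\Delta$ for a $k$-fold meet, and similarly for joins. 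Nesting at most $n$ meets inside at most $n$ joins should give $\ell(c_j)\ge f^U-n^2\Delta$. This is exactly the $n^2$ in the statement, and it is the step I would check most carefully.

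\textbf{Conclusion.} Granting the chain bounds, each increment satisfies
\[
\alpha^*_{\sigma(j)}=\ell(c_j)-\ell(c_{j-1})\le f^U-\bigl(f^U-n^2\Delta\bigr)=n^2\Delta ,
\]
and symmetrically $\alpha^*_{\sigma(j)}\ge (f^U-n^2\Delta)-f^U$. Combining these with the boundary cases involving $\ell(\mathbf{0})=0$ and $|f^U|$ (when $c_{j-1}=\mathbf{0}$ or $c_j$ is itself tight) yields
\[
|\alpha^*_i|\le\max\bigl\{|f^U|,\ |f^U-n^2\Delta|,\ n^2\Delta\bigr\}
\qquad\text{for every } i\in[n].
\]
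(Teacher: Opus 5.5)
Your outline matches the paper's proof. You pick tight points and build a chain in $X$ whose consecutive differences are unit vectors. You then bound $\langle\alpha^*,\cdot\rangle$ from below by $f^U-n^2\Delta$ along the chain, using a join-of-meets expression in tight points with at most $n^2$ leaves, bound it from above by $f^U$, and take differences. However, the step you flag as the main difficulty is not one, and the step that actually produces the $n^2$ is left unproved.

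\textbf{The additive bound is not an obstacle.} Write the lower bounds as $f^U-a\Delta$ and $f^U-b\Delta$. Your inequality then gives $f^U-(a+b)\Delta$ for both $u\wedge v$ and $u\vee v$. By induction, any lattice expression in tight points with $N$ leaf occurrences satisfies $\langle\alpha^*,z\rangle\ge f^U-N\Delta$, with base case $f^L=f^U-\Delta$. This is exactly the paper's induction, and your telescoping is the same computation. The bound can only degrade badly if the expression itself has many leaves.

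\textbf{The leaf count is what needs an argument.} You must show that each chain element has such an expression with at most $n^2$ leaves; ``chosen greedily one coordinate at a time'' does not establish this. Your maximal-chain argument in $\langle T\rangle$ can be made rigorous: if some $S\in\langle T\rangle$ separated two coordinates of a step $c_j-c_{j-1}$, then $c_{j-1}\vee(c_j\wedge S)$ would lie strictly between $c_{j-1}$ and $c_j$. But that argument says nothing about leaf counts.

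\textbf{How the paper closes it.} The paper sidesteps the chain-existence question with one explicit formula.
\begin{enumerate}
\item Take $n$ linearly independent tight points $v^1,\dots,v^n$ and let $I_j:=\{i\mid v^i_j=1\}$. These sets are nonempty and pairwise distinct by nonsingularity.
\item Put $J_j:=\bigwedge_{i\in I_j}v^i$. This is a meet of at most $n$ points, and $(J_k)_j=1$ if and only if $I_k\subseteq I_j$.
\item Let $\pi$ be a linear extension of strict inclusion among the $I_j$. Then $\bigvee_{j\ge t}J_{\pi(j)}=\sum_{j\ge t}e_{\pi(j)}=:w^t$.
\end{enumerate}
This gives the unit-step chain $\mathbf 1=w^1>\cdots>w^n=e_{\pi(n)}$ in $X$, each element having at most $n^2$ leaves. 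Your discussion of the chain endpoints and your use of $\mathbf 0$ then become unnecessary. The last coordinate is bounded directly by $\langle\alpha^*,e_{\pi(n)}\rangle\in[f^U-n^2\Delta,f^U]$.

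If you keep all tight points rather than $n$ independent ones, you must also cut each meet down to at most $n$ points, one per excluded coordinate. Restricting to $v^1,\dots,v^n$ makes this automatic.
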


\begin{proof}
We start with showing that $Q_f$ in fact has at least one extreme point.  Let $\sigma$ be a permutation of coordinates that is consistent with the partial order $D$ of coordinates defining $X$.  For each $\ell\in[n]$, define $z^\ell:=\sum_{q=1}^{\ell}e_{\sigma(q)}$. Each $z^\ell$ satisfies all precedence constraints defining $X$, and hence $z^\ell\in X$. Moreover, the vectors $z^1,\ldots,z^n$ are linearly independent. Therefore, the normal vectors of constraints in~\eqref{eq:sublinear-polyhedron} span $\mathbb{R}^n$, so its feasible region has at least one extreme point~\cite[Theorem 2.6]{bertsimas1997introduction}.

Now, let $\alpha^*$ be  an extreme point of $Q_f$. The proof proceeds as follows. First, we construct a chain of $n$ feasible points $w^1, \ldots, w^n$ of $X$. Second, we derive upper and lower bounds on $\langle \alpha^*, w^t\rangle$ for $t \in [n]$. Last, the bound on   $\|\alpha^*\|_\infty$ follows by taking difference between consecutive terms.

We first  construct the desired chain by using tight constraints at $\alpha^*$. Since $\alpha^*$ is an extreme point of $Q_f\subseteq\mathbb{R}^n$, there exist $n$ linearly independent points $v^1,\ldots,v^n$ of $X$ such that for each $i\in [n]$ the inequality $\langle \alpha ,v^i\rangle \leq f(v^i)$ is tight at $\alpha^*$. It follows readily that the $n \times n$ matrix $V$ whose $i^{\text{th}}$ row is $v^i$ is nonsingular. For the $j^{\text{th}}$  column vector of the matrix $V$, we define its support as 
\[
    I_j:= \bigl\{i \in[n]\bigm| v^i_j=1 \bigr\}.
\]
Note that the nonsingularity of $V$ implies that  $I_j \neq \emptyset$ for $j \in [n]$ and $I_1, \ldots, I_n$ are pairwise distinct. Thus, strict set inclusion among the supports defines a strict partial order on the  index set $[n]$, that is $j\prec k$  if and only if $I_j\subsetneq I_k$. Let $\pi$ be a linear extension of this partial order, so that
\begin{equation}\label{eq:bounds-linearorder}
I_{\pi(j)}\subsetneq I_{\pi(k)} \quad\Longrightarrow\quad j<k.    
\end{equation}
With this ordering, we define a chain of $n$ points $w^1, \ldots, w^n$ as $w^t = \sum_{j = t}^n e_{\pi(j)}$ for $t \in [n]$.

To show that $w^1,\ldots,w^n$ belong to the lattice $X$, we express each $w^t$ as a finite sequence of componentwise maximum and minimum operations on  $(v^1,\ldots,v^n)$. For $t\in[n]$, define
\begin{equation}\label{eq:bounds-lattice-definition}
\tilde{w}^t
:=
\bigvee_{j=t}^n
\left(
\bigwedge_{i\in I_{\pi(j)}} v^i
\right),
\end{equation}
where recall that the meet and join are defined as componentwise minimum and maximum, respectively. Since each $I_{\pi(j)}$ is nonempty, this is a well-defined lattice expression involving the points $(v^1,\ldots,v^n)$. Because $v^1,\ldots,v^n \in X$ and $X$ is a lattice family, we have $\tilde{w}^t\in X$.  It remains to show that $\tilde{w}^t=w^t$.  To do so, we characterize each coordinate of $\tilde{w}^t$ as follows. Fix $t \in [n]$. For each $\pi(k)$ coordinate, we have
\begin{equation}\label{eq:bounds-equal-definition}
\tilde{w}^t_{\pi(k)}=1
\quad\Longleftrightarrow\quad
\exists j \geq t
\text{ such that }
I_{\pi(j)}\subseteq I_{\pi(k)}.
\end{equation}
This is because by the definition of supports, 
\[
\left(
\bigwedge_{i\in I_{\pi(j)}}v^i
\right)_{\pi(k)}=1
\quad\Longleftrightarrow\quad
I_{\pi(j)}\subseteq I_{\pi(k)}.
\]
Now, we verify two cases. The first case treats $k$ with $k \geq t$. In this case, we let $j = k$. Then $I_{\pi(j)} \subseteq I_{\pi(k)}$, and thus by~\eqref{eq:bounds-equal-definition}, we have $\tilde{w}^t_{\pi(k)} = 1$. Next, we consider $k$ with $k< t$. Suppose, by contradiction, that $\tilde{w}^t_{\pi(k)} = 1$. Then, by~\eqref{eq:bounds-equal-definition}, there exists $j \geq t$ such that $I_{\pi(j)} \subseteq I_{\pi(k)}$. Since $j \geq t > k$, we have $j \neq k$. Since the supports are pairwise distinct, the inclusion is strict, $I_{\pi(j)} \subsetneq I_{\pi(k)}$. By the definition of the linear extension in~\eqref{eq:bounds-linearorder}, this implies $j <k$, contradicting with $j \geq t>k$. Hence, $\tilde{w}^t_{\pi(k)}$ is $1$ if $k \geq t$ and is $0$ otherwise. In other words, $\title{w}^t = w^t$.

The next step is to  derive upper and lower bounds on $\langle\alpha^*,w^t\rangle$. Let $\Delta:=f^U-f^L$. An upper bound follows immediately from 
\[
\langle\alpha^*,w^t\rangle\leq f(w^t)\leq f^U \quad \for t \in [n].
\]
It remains to derive a lower bound. Consider any $z \in X$ that is obtained by taking componentwise maximum and minimum operations on elements in $\{v^1, \ldots, v^n\}$, and let $T$ denote the total number of occurrences of the vectors in the lattice expression. We prove that $\langle\alpha^*,z\rangle\geq f^U-T\Delta$ by induction on $T$. For the base case $T=1$, we have $z=v^i$ for some $i\in[n]$. Since the corresponding constraint is tight at $\alpha^*$, $\langle\alpha^*,z\rangle = f(v^i) \geq f^L = f^U-\Delta$. For the inductive step, suppose first that $ z=u\wedge v$, where the lattice expressions defining $u$ and $v$ contain $T_1$ and $T_2$ occurrences of the vectors \(v^1,\ldots,v^n\), respectively. Thus $T = T_1 + T_2$. Therefore, we have
\[
    \begin{aligned}
    \langle\alpha^*,z\rangle =  \langle\alpha^*,u\wedge v\rangle
    &=
    \langle\alpha^*,u\rangle
    +
    \langle\alpha^*,v\rangle
    -
    \langle\alpha^*,u\vee v\rangle\\
    &\geq
    (f^U-T_1\Delta)
    +(f^U-T_2\Delta)
    -f^U\\
    &=
    f^U-(T_1+T_2)\Delta,
    \end{aligned}
\]
the first equality follows from the identity $u+v=(u\wedge v)+(u\vee v)$, and the inequality follows from the induction hypothesis and $\langle\alpha^*,u\vee v\rangle\leq f(u\vee v)\leq f^U$. The case $z=u\vee v$ follows analogously, using $\langle\alpha^*,u\wedge v\rangle\leq f^U$. Since $w^t = \tilde{w}^t$, we now apply the above bound to the lattice representation in~\eqref{eq:bounds-lattice-definition}. Hence, the total number of occurrences of vectors $v^1, \ldots, v^n$ is at most $n^2$. Therefore, we can conclude 
\[
\langle \alpha^*, w^t \rangle \geq f^U -  n^2\Delta \quad \for t \in [n]. 
\]

Last, we compute bounds on coordinates of $\alpha^*$. For $t \in [n-1]$, 
\[
\alpha^*_{\pi(t)} = \langle \alpha^*, e_{\pi(t)} \rangle =  \langle \alpha^*, w^t \rangle - \langle \alpha^* w^{t+1} \rangle \leq f^U - (f^U - n^2\Delta) = n^2\Delta,
\]
where the second equality holds by the definition of $w^t$ and the inequality follows from the bounds on $\langle \alpha, w^t\rangle $. Similarly, we have 
\[
\alpha^*_{\pi(t)} =   \langle \alpha^*, w^t \rangle - \langle \alpha^* w^{t+1} \rangle \geq (f^U - n^2\Delta) - f^U= -n^2\Delta,
\]
Hence, $\vert \alpha^*_{\pi(t)} \vert \leq n^2\Delta$ for $t \in [n-1]$. It remains to bound the coordinate $\alpha^*_{\pi(n)}$. Since $w^n =e_{\pi(n)}$, we have $\alpha^*_{\pi(n)} = \langle \alpha^*, w^n \rangle$. Therefore, $f^U- n^2\Delta  \leq \alpha^*_{\pi(n)} \leq f^U$, showing  $|\alpha^*_{\pi(n)}|\leq\max\{|f^U|,|f^U-n^2\Delta|\}$. This gives the desired bound on $\|\alpha^*\|_\infty$.
\end{proof}

\begin{lemma}\label{lem:sublinear-envelope-extension}
Let $f: X \to \R$ be a function defined as $f(x):=\min_{k\in[K]}f_k(x)$ for $x \in X$, where for each $k$, $f_k$ is a function defined on a lattice family $X$ admitting a partial order representation. Assume that $\mathbf{0} \in X$ and $f(\mathbf{0}) \geq 0$.  For each $k\in[K]$, define the extension $\bar f_k:\{0,1\}^n\to\R$ by
\begin{equation*}
    \bar f_k(x):=f_k\bigl(\rho(x)\bigr)+C\|x-\rho(x)\|_1
    \quad\for x\in\{0,1\}^n,
\end{equation*}
where $\rho:\{0,1\}^n\to X$ is the retraction operator associated with $X$, constructed as in~\eqref{eq:retraction}, and 
\[
C \geq \max\bigl\{|f^U|, |f^U-n^2(f^U-f^L)|, n^2(f^U-f^L)\bigr\}
\]
with $f^L\leq f_k(x)\leq f^U$ for $x\in X$ and $k\in[K]$. Let $\bar{f}(x)=\min_{k\in[K]}\bar{f}_k(x)$. Then, $\sconv(f)(x)=\sconv(\bar f)(x)$ for $x\in\cone(X)$. If, in addition, $f^L \leq 0 \leq f^U$, then we can choose $C \geq n^2(f^U - f^L)$. 
\end{lemma}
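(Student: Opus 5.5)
We must show $\sconv(f)=\sconv(\bar f)$ on $\cone(X)$. We use the primal representation of Lemma~\ref{lemma:dualsconv}. Both functions are defined on finite sets with nonnegative value at $\mathbf{0}$: we have $\bar f(\mathbf{0})=f(\mathbf{0})\ge 0$ because $\rho(\mathbf{0})=\mathbf{0}$ when $\mathbf{0}\in X$. Hence both envelopes equal their closures, and it suffices to compare
\[
\sconv(f)(x)=\max\bigl\{\langle\alpha,x\rangle \bigm| \alpha\in Q_f\bigr\},\qquad
\sconv(\bar f)(x)=\max\bigl\{\langle\alpha,x\rangle \bigm| \alpha\in \EP_{\bar f}\bigr\}.
\]
Since $\bar f_k=f_k$ on $X$ by property~\eqref{property:v-4} of Lemma~\ref{lem:retraction-properties}, $\bar f$ agrees with $f$ on $X$. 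Thus $\EP_{\bar f}\subseteq Q_f$, which gives $\sconv(\bar f)\le\sconv(f)$ on $\cone(X)$.

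For the reverse inequality, fix $x\in\cone(X)$. The LP over $Q_f$ is bounded, and $Q_f$ has an extreme point by the first part of the proof of Lemma~\ref{lem:bounded-extreme-point-lattice}. Hence the maximum is attained at an extreme point $\alpha^*$, and by that lemma $\|\alpha^*\|_\infty\le C$. It remains to show $\alpha^*\in\EP_{\bar f}$, that is, $\langle\alpha^*,z\rangle\le \bar f_k(z)$ for every $z\in\{0,1\}^n$ and every $k$. We write $\langle\alpha^*,z\rangle=\langle\alpha^*,\rho(z)\rangle+\langle\alpha^*,z-\rho(z)\rangle$. The first term is at most $f(\rho(z))\le f_k(\rho(z))$ because $\rho(z)\in X$. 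The second term is at most $\|\alpha^*\|_\infty\|z-\rho(z)\|_1\le C\|z-\rho(z)\|_1$. Summing gives $\langle\alpha^*,z\rangle\le\bar f_k(z)$, so $\langle\alpha^*,x\rangle\le\sconv(\bar f)(x)$.

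The constant $C$ enters in two places. In the general statement $C$ is at least the bound of Lemma~\ref{lem:bounded-extreme-point-lattice}, which is exactly what the argument needs. For the refinement, assume $f^L\le 0\le f^U$ and let $\Delta:=f^U-f^L$. Then $|f^U|\le\Delta\le n^2\Delta$. Also $f^U-n^2\Delta\in[-n^2\Delta,0]$, since $f^U\le\Delta\le n^2\Delta$, so $|f^U-n^2\Delta|\le n^2\Delta$. Therefore the maximum in the lemma collapses to $n^2\Delta$, and $C\ge n^2(f^U-f^L)$ suffices.

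The main obstacle is already handled by Lemma~\ref{lem:bounded-extreme-point-lattice}: the quadratic bound on the extreme points of $Q_f$. What remains to check is that the optimum over $Q_f$ is attained at an extreme point. This holds because $Q_f$ is pointed, and the objective is bounded since it is dominated by the finite value $\sconv(f)(x)$. There is one further subtlety. The bound of Lemma~\ref{lem:bounded-extreme-point-lattice} is stated for $Q_f$ with bounds on $f$, whereas here $f^L,f^U$ bound each $f_k$. This is harmless because $f=\min_k f_k$ inherits the same bounds.
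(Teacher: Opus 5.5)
Your proof is correct and follows the paper's argument essentially step by step. You get the inclusion $\EP_{\bar f}\subseteq Q_f$ from $\bar f_k=f_k$ on $X$, take an optimal extreme point $\alpha^*$ of $Q_f$ bounded via Lemma~\ref{lem:bounded-extreme-point-lattice}, show it is feasible for the extended LP through the splitting $\langle\alpha^*,z\rangle=\langle\alpha^*,\rho(z)\rangle+\langle\alpha^*,z-\rho(z)\rangle$ and H\"older's inequality, and finish with the same collapse of the constant to $n^2(f^U-f^L)$ when $f^L\le 0\le f^U$, while also making explicit two points the paper leaves implicit ($\rho(\mathbf{0})=\mathbf{0}$, and $f=\min_k f_k$ inheriting the bounds $f^L,f^U$).
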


\begin{proof}
The assumption $\mathbf{0} \in X$ and $f(\mathbf{0})\geq 0$ ensures $\sconv(f)$ and $\sconv(\bar{f})$ are well-defined.  Fix $x\in\cone(X)$. By Lemma~\ref{lemma:dualsconv},
\begin{subequations}
    \begin{align}
    \sconv(f)(x)
    &=
    \max\left\{
        \langle\alpha,x\rangle
        \;\middle|\;
        \langle\alpha,v\rangle\leq f_k(v)
        \ \for k\in[K],\ v\in X
    \right\},
    \label{eq:primal-sconv-f}\\
    \sconv(\bar f)(x)
    &=
    \max\left\{
        \langle\alpha,x\rangle
        \;\middle|\;
        \langle\alpha,v\rangle\leq \bar f_k(v)
        \ \for k\in[K],\ v\in\{0,1\}^{n}
    \right\}.
    \label{eq:primal-sconv-bar-f}
    \end{align}
\end{subequations}
Since $\bar f_k(v)=f_k(v)$ for every $v\in X$, every feasible solution of~\eqref{eq:primal-sconv-bar-f} is also feasible for~\eqref{eq:primal-sconv-f}. Hence, $\sconv(\bar f)(x)\leq\sconv(f)(x)$. It remains to prove the reverse inequality.

 Since the optimal value in~\eqref{eq:primal-sconv-f} is finite and, by Lemma~\ref{lem:bounded-extreme-point-lattice}, its feasible region has at least one extreme point, there exists an extreme point which is also optimal~\cite[Theorem 2.8]{bertsimas1997introduction}. Let $\alpha^*$ be such an optimal extreme point of~\eqref{eq:primal-sconv-f}. We next show the feasibility of $\alpha^*$ for~\eqref{eq:primal-sconv-bar-f}. This implies that $\sconv(f)(x)=\langle\alpha^*,x\rangle\leq\sconv(\bar f)(x)$, completing the proof. For any $v\in\{0,1\}^{n}$ and $k\in[K]$, we have
\[
    \begin{aligned}
    \langle\alpha^*,v\rangle
    &=
    \langle\alpha^*,\rho(v)\rangle
    +
    \langle\alpha^*,v-\rho(v)\rangle\\
    &\leq
    f_k(\rho(v))
    +
    \|\alpha^*\|_\infty\|v-\rho(v)\|_1\\
    &\leq f_k(\rho(v))
    + \max\bigl\{|f^U|,\,
    |f^U-n^2(f^U-f^L)|,\,
    n^2(f^U-f^L)\bigr\} \cdot\|v-\rho(v)\|_1\\
    &\leq
    f_k(\rho(v))
    +
    C\|v-\rho(v)\|_1
    =
    \bar f_k(v),
    \end{aligned}
\]
where the first inequality follows from $\rho(v)\in X$, the feasibility of $\alpha^*$ to~\eqref{eq:primal-sconv-f}, and  H\"{o}lder's inequality
which asserts that $\langle a,b\rangle\leq\|a\|_{\infty}\|b\|_1$ for $a,b\in\R^n$, and the second inequality holds by Lemma~\ref{lem:bounded-extreme-point-lattice} and $f = \min_{k\in [K]}f_k$.

 Last, suppose that $f^L \leq 0 \leq f^U$. Then $|f^U|\leq n^2(f^U-f^L)$ and $\bigl|f^U-n^2(f^U-f^L)\bigr| \leq n^2(f^U-f^L)$. Consequently, $\max\bigl\{|f^U|,\, |f^U-n^2(f^U-f^L)|,\, n^2(f^U-f^L)\bigr\} \leq n^2(f^U-f^L)$. Therefore, in this case, it suffices to choose $C \geq n^2(f^U-f^L)$.
\end{proof}

\begin{proof}[Proof of Theorem~\ref{theo:extension-recovery}]
Clearly, the submodularity of $\bar{F}_k$ follows from Theorem~\ref{them:extension-lattice}. To complete the proof, we derive the convex envelope of $f$ using the sublinear envelope of $\bar{F}$. Applying Lemma~\ref{lem:sublinear-envelope-extension} to the lattice family $\Lf$ and the functions $F_k$, $k\in[K]$, with their extensions $\bar F_k$, we obtain $\sconv(F)(t,x)=\sconv(\bar F)(t,x)$ for $(t,x)\in\cone(\Lf)$. Thus, for $x\in\conv(X)$, we obtain $\conv(f)(x)=\sconv(F)(1,x) = \sconv(\bar F)(1,x)$, where the first equality follows from Proposition~\ref{prop:pers-lift}, and the second equality holds since $(1,x) \in \cone(\Lf)$. This completes the proof. 
\end{proof}

\setcounter{theorem}{0}
\setcounter{proposition}{0}

\renewcommand{\thetheorem}{\Alph{section}.\arabic{theorem}}
\renewcommand{\theproposition}{\Alph{section}.\arabic{proposition}}

\section{Dilworth truncation of intersecting submodular functions}\label{App:lat_extension}
Let $X\subseteq \{0,1\}^n$ be an intersecting family and $f:X\to\R$ be an intersecting submodular function. We define the Dilworth truncation $\check f:\check X\to\R$ as in~\eqref{eq:dilworth-truncation}. In this appendix, we present three properties of the Dilworth truncation based on the results in~\cite{schrijver2003combinatorial}: preservation of sublinear envelope, a pre-order representation of $\check X$, and a strongly polynomial-time value oracle for $\check f$.

We first show that the Dilworth truncation preserves the associated extended polymatroid and, consequently, the sublinear envelope.
\begin{proposition}
\label{prop:lattice-extension-1}
Consider an intersecting submodular function $f:X\subseteq\{0, 1\}^n\to\R$ with either $\mathbf{0} \notin X$ or $f(\mathbf{0})\geq 0$. Let $\check f:\check X\to\R$ be its Dilworth truncation. Then $\EP_f=\EP_{\check f}$. Moreover,  $\sconv(f)(x) = \sconv(\check{f})(x)$ for $x \in \cone(X) =\cone(\check{X})$. 
\end{proposition}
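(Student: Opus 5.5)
To prove Proposition~\ref{prop:lattice-extension-1}, I would first establish the set equality $\EP_f=\EP_{\check f}$ by proving the two inclusions separately. The sublinear-envelope statement then follows from the primal representation in Lemma~\ref{lemma:dualsconv}, together with the equality of the two cones.

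\emph{Inclusion $\EP_{\check f}\subseteq \EP_f$.} I would show that $\check f(v)\le f(v)$ for every $v\in X$ with $v\neq\mathbf{0}$, using the trivial decomposition $V=\{v\}$ in~\eqref{eq:dilworth-truncation}. If $\mathbf{0}\in X$, then the constraint at $\mathbf{0}$ reads $0\le f(\mathbf{0})$, and this holds by the hypothesis $f(\mathbf{0})\ge 0$. Consequently any $\alpha$ satisfying $\langle\alpha,v\rangle\le\check f(v)$ for all $v\in\check X$ also satisfies $\langle\alpha,v\rangle\le f(v)$ on $X$. One subtlety is that $X\subseteq\check X$ requires $v\le\mathbf{1}$, which is automatic for binary $v$.

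\emph{Inclusion $\EP_f\subseteq\EP_{\check f}$.} Take $\alpha\in\EP_f$ and $x\in\check X$, and let $V$ attain the minimum in~\eqref{eq:dilworth-truncation}. Then
\[
\langle\alpha,x\rangle=\sum_{v\in V}\langle\alpha,v\rangle\le\sum_{v\in V}f(v)=\check f(x).
\]
The empty decomposition gives $\langle\alpha,\mathbf{0}\rangle=0=\check f(\mathbf{0})$, so that case is also fine. Intersecting submodularity is not even needed for this part; it matters only for the lattice and submodularity properties, which are cited from Schrijver.

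\emph{Cones and envelopes.} For the cone equality, $X\subseteq\check X$ gives one inclusion. Conversely, every element of $\check X$ is a sum of elements of $X$, so $\check X\subseteq\cone(X)$. Hence $\cone(X)=\cone(\check X)$.

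Next I would verify Assumption~\ref{ass:proper} for both $f$ and $\check f$ over these finite binary domains. The main point requiring care is the case $\mathbf{0}\notin X$: I would choose $\alpha=-M\mathbf{1}$ with $M$ large, so that $\langle\alpha,v\rangle\le f(v)$ on every nonzero binary $v$. This shows both $\EP$ sets are nonempty.

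Finally, because the domains are finite, the discussion after Lemma~\ref{lemma:dualsconv} shows that $\sconv=\cl\sconv$ and that
\[
\sconv(g)(x)=\sup\{\langle\alpha,x\rangle\mid \alpha\in\EP_g\}.
\]
Applying this to $g=f$ and $g=\check f$, and using $\EP_f=\EP_{\check f}$, gives $\sconv(f)(x)=\sconv(\check f)(x)$ for all $x\in\cone(X)=\cone(\check X)$.
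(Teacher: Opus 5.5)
Your proposal is correct and follows essentially the same route as the paper. Both arguments prove the two inclusions using the singleton decomposition and summation over an arbitrary decomposition, obtain the cone equality from $X\subseteq\check X\subseteq\cone(X)$, and then apply the primal representation of Lemma~\ref{lemma:dualsconv}. Your extra checks at $\mathbf{0}$, and of the nonemptiness of $\EP_f$ when $\mathbf{0}\notin X$ so that Assumption~\ref{ass:proper} holds, are details the paper leaves implicit.
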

\begin{proof}
We first prove $\EP_f = \EP_{\check{f}}$. To show $\EP_f\subseteq\EP_{\check f}$, we consider a point $\alpha \in\EP_f$. For any point $x \in \check{X}$ and  any $V\subseteq X$ satisfying $\sum_{v\in V}v=x$, we have $\langle\alpha,x\rangle
=\sum_{v\in V}\langle\alpha,v\rangle
\leq\sum_{v\in V}f(v)$, where the inequality holds since $\alpha \in \EP_f$. Thus, for any given $x \in \check{X}$, taking the minimum over all such $V$ gives $\langle\alpha,x\rangle\leq\check f(x)$. Hence, $\alpha\in\EP_{\check f}$, and therefore $\EP_f\subseteq\EP_{\check f}$. Conversely, let $\alpha\in\EP_{\check f}$. For any $x\in X$, the
singleton $V=\{x\}$ is feasible in the definition of $\check f(x)$, so
$\check f(x)\leq f(x)$. Thus
$\langle\alpha,x\rangle\leq\check f(x)\leq f(x)$ for every $x\in X$,
which implies $\alpha\in\EP_f$. Therefore,
$\EP_{\check f}\subseteq\EP_f$.

We next show that $\cone(X)=\cone(\check X)$. Since every $v\in X$ belongs to $\check X$ through the singleton decomposition $V=\{v\}$, we have
$X\subseteq\check X$,
and therefore $\cone(X)\subseteq\cone(\check X)$.
Conversely, every $x\in\check X$ admits a representation $x=\sum_{v\in V}v$
for some $V\subseteq X$. Hence $x\in\cone(X)$, which gives
$
\check X\subseteq\cone(X)$
and therefore $\cone(\check X)\subseteq\cone(X)$.
Thus, $\cone(X)=\cone(\check X)$.

It remains to prove equality of the sublinear envelopes. For $x \in \cone(X)$,
\[
\sconv(f)(x) = \max\bigl\{\langle \alpha, x\rangle \bigm| \alpha \in EP_f\bigr\} = \max \bigl\{\langle \alpha, x\rangle \bigm| \alpha \in EP_{\check{f}} \bigr\} = \sconv(\check{f})(x),
\]
where the first and third equalities follow from Lemma~\ref{lemma:dualsconv}, and the second equality holds due to $\EP_{f} = \EP_{\check{f}}$. 
\end{proof}
We next give a pre-order representation of $\check X$. For each $i\in[n]$, define the slice
$X_i:=\{x\in X\mid x_i=1\}$. Each nonempty $X_i$ is a lattice family. Suppose it is given by the pre-order representation
\begin{equation}\label{eq:intersecting-rep}
X_i=
\bigl\{
x\in\{0,1\}^n
\bigm|
x_j=0 \text{ for } j\in I_i^0,\;
x_j=1 \text{ for } j\in I_i^1,\;
x_j\geq x_\ell \text{ for }(j,\ell)\in D_i
\bigr\},
\end{equation}
where $D_i\subseteq[n]\times[n]$ is reflexive and transitive. For each nonempty $X_i$, let $\bot^i$ denote its least element. A pre-order representation of $\check X$ can be constructed directly from these least elements. Specifically, let
$I^0:=\{i\in[n]\mid X_i=\emptyset\}$ and
$D:=\{(i,j)\in[n]\times[n]\mid X_j\neq\emptyset,\ \bot_i^j=1\}
\cup\{(i,i)\mid X_i=\emptyset\}$. Then,
\begin{equation}
\label{eq:lattice-extension-preorder}
\check X=
\bigl\{
x\in\{0,1\}^n
\bigm|
x_i=0 \text{ for } i\in I^0,\;
x_i\geq x_j \text{ for }(i,j)\in D
\bigr\}.
\end{equation}
Here, $I^0$ collects the coordinates that are identically zero over $X$, and hence remain fixed at zero in $\check X$. The relation $D$ captures the structural dependencies among coordinates through the least elements $\bot^j$, with $\bot_i^j=1$ indicating that coordinate $i$ is forced whenever coordinate $j$ is active.

\begin{proposition}
\label{prop:lattice-extension-preorder}
Let $X\subseteq\{0,1\}^n$ be an intersecting family and define
$\check X:=\{\sum_{v\in V}v\mid V\subseteq X,\ \sum_{v\in V}v\leq\mathbf{1}\}$.
Then, the pre-order representation of $\check X$ is given by~\eqref{eq:lattice-extension-preorder}.
\end{proposition}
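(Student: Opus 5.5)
We prove the set equality by double inclusion. Let $Y$ denote the right-hand side of~\eqref{eq:lattice-extension-preorder}. Two facts about $X$ are used throughout. First, each nonempty slice $X_j$ is a lattice family: two members of $X_j$ share coordinate $j$, so their meet is nonzero and the intersecting property keeps their meet and join in $X$, hence in $X_j$. Consequently $\bot^j$, the componentwise meet of all elements of $X_j$, lies in $X_j$, and $\bot^j \le v$ for every $v\in X$ with $v_j=1$. Second, the relation $D$ is reflexive, since $\bot^j_j=1$ whenever $X_j\neq\emptyset$.

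\emph{Inclusion $\check X\subseteq Y$.} Let $x=\sum_{v\in V}v\le\mathbf 1$ with $V\subseteq X$.
\begin{itemize}
\item If $i\in I^0$, no $v\in X$ has $v_i=1$, so $x_i=0$.
\item If $(i,j)\in D$ with $X_j\neq\emptyset$ and $x_j=1$, some $v\in V$ has $v_j=1$. Then $v\ge\bot^j$, so $v_i=1$ and hence $x_i=1$.
\item Pairs $(i,i)$ are trivially satisfied.
\end{itemize}
So $x\in Y$.

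\emph{Inclusion $Y\subseteq\check X$.} Take $x\in Y$ and let $S=\{j\mid x_j=1\}$. Every $j\in S$ satisfies $j\notin I^0$, so $X_j\neq\emptyset$ and $\bot^j$ is defined. Moreover, $\bot^j\le x$, because $\bot^j_i=1$ means $(i,j)\in D$, which forces $x_i\ge x_j=1$. Hence
\[
x=\bigvee_{j\in S}\bot^j .
\]
It remains to write this join as a sum of \emph{disjoint} members of $X$.

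Consider the sets $\bot^j$, $j\in S$, and group them into connected components under the relation ``overlap'' ($\bot^j\wedge\bot^k\neq\mathbf 0$). Within a component, form the join step by step along a spanning order, so that each new set overlaps the union built so far. The running join is then always a member of $X$ by the intersecting property: the current union $u\in X$ and $\bot^k$ meet nontrivially, so $u\vee\bot^k\in X$. Each component therefore yields one element of $X$. Distinct components are disjoint, since any overlap would merge them, so their sum equals their join, namely $x$, and the sum is $\le\mathbf 1$. Hence $x\in\check X$.

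\emph{Main obstacle.} The delicate step is the connectivity argument. One must check that joining within a component never requires a pair that fails to intersect. Ordering the component by breadth-first search from any vertex ensures each newly added $\bot^k$ overlaps some earlier $\bot^{k'}\le u$, so $u\wedge\bot^k\neq\mathbf 0$ and the join stays in $X$. One must also handle the case $x=\mathbf 0$, which is covered by the empty sum.
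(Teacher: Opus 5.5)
Your double-inclusion argument for the set equality is correct. For the inclusion $Y\subseteq\check X$ you take a different route from the paper. The paper also reduces to $x=\bigvee_{j:x_j=1}\bot^j$, but then simply notes that each $\bot^j\in X\subseteq\check X$ and that $\check X$ is a lattice family (Theorem~49.4 of~\cite{schrijver2003combinatorial}, cited earlier in the paper), so the join lies in $\check X$. You instead build an explicit witness. You take the overlap components of $\{\bot^j\}_{j\in S}$ and join each along a BFS order, so every step applies the intersecting property to a pair with nonzero meet. This yields pairwise disjoint members of $X$ whose sum is $x$. Your version is self-contained: it reproves, for the specific join needed, the join-closure part of Schrijver's theorem. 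The paper's version is shorter but depends on that external result.

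There is one piece of the statement you do not establish. In the paper, a pre-order representation requires $D$ to be reflexive \emph{and transitive}. Transitivity is exactly what the retraction $\rho$ built from the closure $c$ relies on: it is used to show $\rho(x)\in X$ in Lemma~\ref{lem:retraction-properties}. You record reflexivity but never check transitivity, and nothing in your argument implies it. The missing step is short:
\begin{itemize}
\item First show that for $X_j\neq\emptyset$, $\bot^j_i=1$ holds if and only if $X_j\subseteq X_i$. This uses $\bot^j\in X_j$ and $\bot^j\le v$ for all $v\in X_j$.
\item Now take $(i,j),(j,k)\in D$. If $X_k=\emptyset$, the only pair in $D$ with second coordinate $k$ is $(k,k)$, so $j=k$ and $(i,k)=(i,j)\in D$.
\item Otherwise $X_k\subseteq X_j$, so $X_j\neq\emptyset$ and hence $X_j\subseteq X_i$. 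Therefore $X_k\subseteq X_i$, which gives $\bot^k_i=1$ and $(i,k)\in D$.
\end{itemize}
With this added, your proof covers everything the statement claims.
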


\begin{proof}
We first verify the representation. Let $x\in\check X$. By the definition of $\check X$, there exists $V\subseteq X$ such that
$x=\sum_{v\in V}v$ and $\sum_{v\in V}v\leq\mathbf{1}$. If $X_i=\emptyset$, then $v_i=0$ for every $v\in X$, and hence $x_i=0$. Now consider $(i,j)\in D$ with $X_j\neq\emptyset$. If $x_j=1$, then there exists $v\in V$ such that $v_j=1$. Therefore, $v\in X_j$ and $v\geq \bot^j$ which implies $x_i=1$. Therefore, $x_i\geq x_j$ for every $(i,j)\in D$, and thus $x$ satisfies all the constraints in~\eqref{eq:lattice-extension-preorder}.

Conversely, let $x\in\{0,1\}^n$ satisfy the constraints in~\eqref{eq:lattice-extension-preorder}. For any $j$ with $x_j=1$, we have $j\notin I^0$, and hence $X_j\neq\emptyset$. Moreover, if $\bot_i^j=1$, then $(i,j)\in D$, so $x_i\geq x_j=1$. Therefore, $\bot^j\leq x$ for every $j$ with $x_j=1$. Since $\bot_j^j=1$, it follows that
\[
x=\bigvee_{j:x_j=1}\bot^j \in \check{X},
\]
where the inclusion follows from that $\bot^j$ belongs to $X\subseteq\check X$ and $\check X$ is a lattice family.

It remains to verify that $D$ is reflexive and transitive. Reflexivity follows directly from its definition. For transitivity, first observe that, whenever $X_j\neq\emptyset$,
$\bot_i^j=1$ if and only if $X_j\subseteq X_i$. Indeed, if $\bot_i^j=1$, then every $x\in X_j$ satisfies $x_i=1$, and hence $X_j\subseteq X_i$. Conversely, if $X_j\subseteq X_i$, then every element of $X_j$, including $\bot^j$, has $i$th coordinate equal to one, so $\bot_i^j=1$.

Now suppose $(i,j),(j,k)\in D$. If $X_k=\emptyset$, then $(j,k)\in D$ implies $j=k$, and therefore $(i,k)=(i,j)\in D$. Otherwise, $X_k\neq\emptyset$, and $(j,k)\in D$ implies $X_k\subseteq X_j$. In particular, $X_j\neq\emptyset$, so $(i,j)\in D$ implies $X_j\subseteq X_i$. Hence $X_k\subseteq X_i$, which gives $\bot_i^k=1$ and therefore $(i,k)\in D$. Thus, $D$ is transitive, completing the proof.
\end{proof}
We now describe how to evaluate $\check f(x)$ for a fixed $x\in\check X$. Define the support of $x$ by $I_x:=\{j\in[n]:x_j=1\}$ and let $k=|I_x|$. For each $j\in I_x$, the least element $\bot^j$ is obtained directly from the representation of $X_j$. Then, the indices are processed in an order consistent with the inclusion relations among the vectors $\bot^{j}$, with each coefficient increased by the minimum residual slack so that feasibility is preserved and at least one extended polymatroid inequality becomes tight. The resulting tight inequalities define a vector $\alpha^\pi\in\mathbb{R}^n$ satisfying $\langle \alpha^\pi,x\rangle=\check f(x)$, as formalized in Algorithm~\ref{alg:evaluate_lattice_extension}.

Moreover, the algorithm runs in strongly polynomial time. In particular, the optimization problem in Line~9 of Algorithm~\ref{alg:evaluate_lattice_extension} is a submodular minimization problem over a lattice family: any two feasible points have the common nonzero coordinate $\pi(i)$, so the intersecting property of $X$ ensures that their componentwise minimum and maximum remain feasible, while the objective remains submodular since it is obtained from $f$ by adding a linear function. Therefore, this problem can be solved in strongly polynomial time~\cite{lee2015faster,orlin2009faster,schrijver2000combinatorial}. Since the algorithm performs at most $n$ such minimizations, its overall running time is strongly polynomial. The following proposition establishes its correctness.
\begin{proposition}
\label{prop:greedy_evaluation}
Consider an intersecting submodular function $f:X\subseteq\{0, 1\}^n\to\R$ with either $\mathbf{0} \notin X$ or $f(\mathbf{0}) \geq 0$. Let $\check f:\check X\to\R$ be defined in~\eqref{eq:dilworth-truncation}. For any $x\in\check{X}$, let $\alpha^\pi$ be the weight vector constructed by Algorithm~\ref{alg:evaluate_lattice_extension}. Then, 
\begin{equation*} 
\langle \alpha^\pi,x\rangle=\check{f}(x).
\end{equation*} 
\end{proposition}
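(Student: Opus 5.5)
Since Algorithm~\ref{alg:evaluate_lattice_extension} is only described informally, I fix the version I intend to analyze. Let $\pi$ order $I_x$ consistently with the inclusion relations among the $\bot^j$, so that $\bot^{\pi(i)}\le\bot^{\pi(i')}$ forces $i\le i'$. Set $\alpha^\pi_j=0$ for $j\notin I_x$. Then, for $i=1,\dots,k$, define
\[
\alpha^\pi_{\pi(i)}:=\min_{y}\Bigl\{f\bigl(e_{\pi(i)}+y\bigr)-\langle\alpha^\pi,y\rangle \Bigm| y\le \textstyle\sum_{i'<i}e_{\pi(i')},\ e_{\pi(i)}+y\in X\Bigr\}.
\]
This is exactly the recursion~\eqref{eq:intersecting-facet}, now restricted to the support of $x$. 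The proof then mirrors Proposition~\ref{prop:intersecing-facet-1}, except that we no longer assume unit vectors lie in $X$. We need two inequalities, $\langle\alpha^\pi,x\rangle\le\check f(x)$ and $\langle\alpha^\pi,x\rangle\ge\check f(x)$.

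\emph{Upper bound via feasibility.} First I would check the recursion is well defined: the feasible set of step $i$ is nonempty. The point $\bot^{\pi(i)}$ lies in $X$, so it suffices that $\bot^{\pi(i)}-e_{\pi(i)}\le\sum_{i'<i}e_{\pi(i')}$. Since $x\in\check X$, Proposition~\ref{prop:lattice-extension-preorder} gives $\bot^{\pi(i)}\le x$. Any coordinate $j\neq\pi(i)$ with $\bot^{\pi(i)}_j=1$ gives $X_{\pi(i)}\subseteq X_j$, hence $\bot^j\le\bot^{\pi(i)}$, and the consistency of $\pi$ places $j$ earlier.

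Next I show $\alpha^\pi$ restricted to $I_x$ satisfies $\langle\alpha^\pi,v\rangle\le f(v)$ for every $v\in X$ with $v\le x$. Take the largest $i$ with $v_{\pi(i)}=1$ and write $v=e_{\pi(i)}+y$. Then $y$ is feasible at step $i$, so $\alpha^\pi_{\pi(i)}\le f(v)-\langle\alpha^\pi,y\rangle$. For any decomposition $x=\sum_{v\in V}v$ with $V\subseteq X$, every $v$ satisfies $v\le x$. Summing the inequalities over $V$ gives $\langle\alpha^\pi,x\rangle\le\sum_{v\in V}f(v)$, hence $\langle\alpha^\pi,x\rangle\le\check f(x)$.

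\emph{Lower bound via a tight decomposition.} Let $y^i$ be a minimizer at step $i$ and $u^i:=e_{\pi(i)}+y^i$, which is tight: $\langle\alpha^\pi,u^i\rangle=f(u^i)$. I then build a disjoint decomposition of $x$ into tight sets by processing $i=k,k-1,\dots,1$. Maintain a family $\mathcal F$ of pairwise disjoint tight sets, and add $u^i$ unless $\pi(i)$ is already covered. Whenever the new set $u$ meets some $w\in\mathcal F$ (so $u\wedge w\neq\mathbf 0$), uncross them. Intersecting submodularity gives
\[
f(u\wedge w)+f(u\vee w)\le f(u)+f(w)=\langle\alpha^\pi,u\wedge w\rangle+\langle\alpha^\pi,u\vee w\rangle,
\]
and feasibility of $\alpha^\pi$ on sets $\le x$ yields the reverse inequalities. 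So both $u\wedge w$ and $u\vee w$ are tight. We keep $u\vee w$, absorb it into $\mathcal F$, and repeat until the family is disjoint. Each $\pi(i)$ then lies in some tight member, the members are disjoint and contained in $x$, and their sum is $x$. Summing tightness over $\mathcal F$ gives $\check f(x)\le\sum_{v\in\mathcal F}f(v)=\langle\alpha^\pi,x\rangle$.

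\emph{Main obstacle.} The hardest step is making the uncrossing rigorous. Merging $u$ with $w$ must terminate, and every union must stay inside $x$ and inside $X$; membership in $X$ comes from the intersecting property because $u\wedge w\ne\mathbf 0$. The merges also must not uncover coordinates already handled. I would organize this as a laminar, union-closed family whose maximal members partition $I_x$, arguing by induction on the number of processed indices. Unions of overlapping tight sets are tight, so the maximal members stay tight and disjoint.
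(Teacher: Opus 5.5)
Your two halves, feasibility of $\alpha^\pi$ on every $v\in X$ with $v\le x$ via the last-index argument and the reverse inequality via tight sets, are sound. For the latter, inclusion-maximal tight sets are pairwise disjoint, since two intersecting tight sets have a tight union. Together they would give a self-contained proof, whereas the paper only defers to the greedy procedure in Schrijver, \S 49.7.

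The gap is that you analyze a recursion that is not Algorithm~\ref{alg:evaluate_lattice_extension}, and yours breaks down when two coordinates of $I_x$ have the same least element. Suppose $j\neq j'$ in $I_x$ satisfy $\bot^j=\bot^{j'}$, equivalently $X_j=X_{j'}$, so the two coordinates always occur together in $X$. Then no ordering satisfies your requirement that $\bot^{\pi(i)}\le\bot^{\pi(i')}$ force $i\le i'$. Under the algorithm's strict version of that requirement, whichever of $j,j'$ comes first has an \emph{empty} feasible set in your recursion. Indeed, every $u\in X$ with $u_j=1$ satisfies $u\ge\bot^j$, hence $u_{j'}=1$, which the constraint $y\le\sum_{i'<i}e_{\pi(i')}$ forbids. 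The smallest instance is $X=\{(1,1)\}$, $x=(1,1)$: your first step requires $e_1\in X$. Your nonemptiness argument hides this in ``the consistency of $\pi$ places $j$ earlier'', which needs $\bot^j\neq\bot^{\pi(i)}$. The algorithm avoids the problem by imposing $u\le s^\pi_i=\bigvee_{i'\le i}\bot^{\pi(i')}$, which always admits $u=\bot^{\pi(i)}$. When there are no ties, $s^\pi_i=\sum_{i'\le i}e_{\pi(i')}$ and the two recursions agree, but you must state and prove this, since the proposition concerns the algorithm's $\alpha^\pi$.

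To close the gap, you can contract each tie class to one coordinate and run your argument on the quotient. You would then check that the algorithm puts the class value on the first member and $0$ on the others. Alternatively, work with $u\le s^\pi_i$ directly, as follows.

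\emph{Feasibility.} Your last-index argument still works, since $v\le\sum_{i'\le i}e_{\pi(i')}\le s^\pi_i$ and no coordinate of $v$ changes after step $i$. For $v=\mathbf{0}$, use $f(\mathbf{0})\ge 0$; this is where that hypothesis enters.

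\emph{Tightness.} This is what needs a new argument, because the step-$i$ minimizer $u^i$ may now contain later coordinates. Since $s^\pi_i$ is the union of the tie classes met by step $i$, each later update of a coordinate of $u^i$ occurs at a non-first member of a tie class. At such a step $s^\pi$ does not grow. Every set $u$ in that step's feasible region already satisfies $\langle\alpha^\pi,u\rangle\le f(u)$. To see this, note that $u$ is feasible at the first step $i^*$ with $u\le s^\pi_{i^*}$, because it meets, and hence contains, the class added there; every later update of a coordinate of $u$ keeps the inequality. Hence $\delta\ge 0$ at that step. On the other hand, $u^i$ is still tight and feasible, which gives $\delta\le 0$. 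So these updates vanish and $u^i$ stays tight. Every coordinate of $I_x$ then lies in a final tight set, and your maximal-tight-set argument completes the proof.
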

\begin{proof}
Algorithm~\ref{alg:evaluate_lattice_extension} is the vector-form counterpart of the greedy procedure in Section~49.7 of~\cite{schrijver2003combinatorial}. It iteratively updates the
coefficients until the resulting tight sets certify that the constructed vector attains the Dilworth truncation value at $x$. Therefore, the optimality in Section~49.7 of~\cite{schrijver2003combinatorial} applies directly and yields $\langle \alpha^\pi, x \rangle = \check{f}(x)$.
\end{proof}

\begin{algorithm}[htbp]
\caption{Evaluation of the extended lattice submodular function
$\check{f}(x)$}
\label{alg:evaluate_lattice_extension}
\begin{algorithmic}[1]
\Require
    An intersecting family $X \subseteq \{0,1\}^n$;
    an intersecting submodular function $f:X\to\R$;
    a vector $x\in\check{X}$;
    the least elements $\bot^j$ for each $j\in I_x$.
\Ensure
    Value $\check{f}(x)$.

\If{$x=\mathbf{0}$}
    \State \Return $0$
\EndIf

\State Initialize $\alpha^\pi\gets\mathbf{0}\in\R^n$
\State Initialize $s^\pi_0\gets\mathbf{0}\in\{0,1\}^n$
\State Let $\pi(1),\ldots,\pi(k)$ be an ordering of $I_x$ such that
$\bot^{\pi(j)}<\bot^{\pi(i)}$ implies $j<i$

\For{$i=1,\ldots,k$}
    \State $s^\pi_i\gets s^\pi_{i-1}\vee \bot^{\pi(i)}$

    \State
    $\displaystyle
    \delta_i\gets
    \min_{u}
    \left\{
        f(u)-\langle\alpha^\pi,u\rangle
        \,\middle|\,
        u\in X,\;
        u_{\pi(i)}=1,\;
        u\le s^\pi_i
    \right\}$

    \State
    $\alpha^\pi_{\pi(i)}
    \gets
    \alpha^\pi_{\pi(i)}+\delta_i$
\EndFor

\State \Return $\langle\alpha^\pi,x\rangle$
\end{algorithmic}
\end{algorithm}

\section{Proof of Proposition~\ref{prop:sep_intersecting}}\label{proof:sep_intersecting}
We start with discussing how a value oracle of $\bar{f}_k$ can be built. Here, we assume that the intersecting family domain $X$ admits a representation given as in~\eqref{eq:intersecting-rep}. Then, it follows from Proposition~\ref{prop:lattice-extension-preorder} that the lattice family $\check{X}$ admits a pre-order representation, yielding a value oracle of the retraction $\rho$. Moreover, Proposition~\ref{prop:greedy_evaluation} provides a value oracle of $\check{f}_k$. These together lead to a value oracle of $\bar{f}_k$ . 

\begin{proof}
Let $\bar{f}(x):=\min\{\bar{f}_1(x),\bar{f}_2(x)\}$. Then, we observe that the separation problem of $\sconv(\bar{f})$ can be solved using Proposition~\ref{prop:sep_sublinear}. This is because $\bar{f}_k(\mathbf{0}) = \check{f}_k(\mathbf{0}) = 0$, and, by Theorem~\ref{them:extension-lattice}, the submodularity of $\bar{f}_k$ on $\{0,1\}^n$ follows from that of $\check{f}_k$ on $\check{X}$.  

Next, we note that, for $x \in \cone(X)$, $\sconv(f)(x) = \sconv(\bar{f})(x)$. To see this, for any $\bar x\in\cone(X)$, we have
\begin{align*}
\sconv(f)(\bar x)
&=
\max\left\{
    \alpha^\intercal \bar x
    \,\middle|\,  \alpha\in \EP_{ f_1}\cap \EP_{ f_2}
\right\} \\
&=
\max\left\{
    \alpha^\intercal \bar x
    \,\middle|\,
    \alpha\in \EP_{\check f_1}\cap \EP_{\check f_2}
\right\} \\
&=
\sconv(\check f)(\bar x) =
\sconv(\bar f)(\bar x),
\end{align*}
where the first and third equalities follow from Lemma~\ref{lemma:dualsconv}, the second equality follows from
$\EP_{f_k}=\EP_{\check f_k}$ for $k=1,2$, which is established in Proposition~\ref{prop:lattice-extension-1}, and the last equality follows due to Lemma~\ref{lem:sublinear-envelope-extension} and $\check{f}_k(\mathbf{0}) = 0$.

Now, for a given $(\bar{x},\bar{\mu}) \in \Q^{n+1}$ with $\bar{x} \in \cone(X)$, we call the separation oracle of $\bar{f}$. If $\bar{\mu} \geq \sconv(\bar{f})(\bar{x})$ then we can conclude that $\bar{\mu} \geq \sconv(f)(\bar{x})$. Otherwise, the valid inequality $\mu \geq \langle \alpha, x\rangle $ returned by the oracle is also valid for the epigraph of $\sconv(f)$ as $\sconv(f)(x) = \sconv(\bar{f})(x)$ on $x \in \cone(X)$, and violates $(\bar{x},\bar{\mu})$. 
\end{proof}

\section{Separation for Bilinear Functions}\label{App:sep-cycle}

Consider the bilinear function
\[
f_G(x)=\sum_{(i,j)\in E}a_{ij}x_ix_j \quad \for x\in \{0, 1\}^n
\]
associated with a graph or subgraph \(G=(V,E)\). Given a fractional point
\((\bar{x},\bar{\mu})\in[0,1]^n\times\mathbb{R}\), the separation procedure is described in Algorithm~\ref{alg:separation_graph}. Our procedure greedily selects a collection of edge-disjoint cycles and applies the cycle inequalities derived in \eqref{eq:cycle-env}. All edges that are not assigned to a selected cycle are treated individually using their McCormick envelopes. Since each edge appears in exactly one component of the resulting decomposition, the component-wise affine underestimators can be aggregated into a valid inequality for \(f_G\).

To prioritize the candidate cycles, we assign a score to each edge. For \((i,j)\in E\), define
\begin{equation}
\label{eq:edge_violation}
v_{ij}
=
\begin{cases}
a_{ij}\bigl(\bar{x}_i\bar{x}_j-
\max\{0,\bar{x}_i+\bar{x}_j-1\}\bigr),
& \text{if }a_{ij}>0,\\[1mm]
a_{ij}\bigl(\bar{x}_i\bar{x}_j - \min\{\bar{x}_i,\bar{x}_j\}\bigr),
& \text{if }a_{ij}\leq 0.
\end{cases}
\end{equation}
The quantity \(v_{ij}\) is the gap between the bilinear term and its single-edge McCormick envelope at \(\bar{x}\). For a cycle \(C\) with edges $E(C)$, we define $\operatorname{score}(C) = \sum_{(i,j)\in E(C)}v_{ij}$. The cycles are processed in non-increasing order of their scores. This ordering is used only as a heuristic and does not necessarily coincide with the ordering induced by the actual cycle-cut violations.

\begin{algorithm}[htbp]
\caption{Cycle-based separation for a bilinear function}
\label{alg:separation_graph}
\begin{algorithmic}[1]
\Require Graph \(G=(V,E)\); weights \(a_{ij}\); cycles \(\mathcal{C}\) ordered by non-increasing score; point \((\bar{x},\bar{\mu})\).
\Ensure A violated inequality defined by \((\alpha,\beta)\), or \(\emptyset\).

\State Set \(\alpha\gets 0\), \(\beta\gets 0\), and \(U\gets\emptyset\)

\For{\(C\in\mathcal{C}\)}
    \If{\(E(C)\cap U\neq\emptyset\)}
        \State \textbf{continue}
    \EndIf
    \State \((\alpha^C,\beta^C)\gets\) Algorithm~\ref{alg:separation_cycle}\((C,\bar{x})\)
    \If{\((\alpha^C,\beta^C)\neq\emptyset\)}
        \State \(\alpha_i\gets\alpha_i+\alpha_i^C\) for all \(i\in V(C)\)
        \State \(\beta\gets\beta+\beta^C\)
        \State \(U\gets U\cup E(C)\)
    \EndIf
\EndFor

\For{\((i,j)\in E\setminus U\)}
    \If{\(a_{ij}<0\)}
        \If{\(\bar{x}_i\leq\bar{x}_j\)}
            \State \(\alpha_i\gets\alpha_i+a_{ij}\)
        \Else
            \State \(\alpha_j\gets\alpha_j+a_{ij}\)
        \EndIf
    \ElsIf{\(a_{ij}>0\) and \(\bar{x}_i+\bar{x}_j>1\)}
        \State \(\alpha_i\gets\alpha_i+a_{ij}\), \(\alpha_j\gets\alpha_j+a_{ij}\)
        \State \(\beta\gets\beta-a_{ij}\)
    \EndIf
\EndFor

\If{\(\alpha^\intercal\bar{x}+\beta>\bar{\mu}\)}
    \State \Return \((\alpha,\beta)\)
\EndIf
\State \Return \(\emptyset\)
\end{algorithmic}
\end{algorithm}

For a residual edge with \(a_{ij}<0\), the algorithm selects an active affine piece of
\(a_{ij}\min\{x_i,x_j\}\). For an edge with \(a_{ij}>0\), it selects an active affine piece of
\(a_{ij}\max\{0,x_i+x_j-1\}\). Therefore, every residual-edge inequality is valid. Together with the cycle inequalities, edge-disjointness ensures the validity of the inequality in Algorithm~\ref{alg:separation_graph}.

We next describe the separation procedure for an individual cycle. Consider $f_C(x) = \sum_{i=1}^{n-1}a_ix_ix_{i+1}+a_nx_1x_n$. A switching transformation replaces \(x_i\) by \(1-x_i\) for selected indices. The switching set is constructed sequentially so that the transformed coefficients of the first \(n-1\) edges are negative. If the remaining coefficient is also negative, the switched function is submodular and no cycle-specific inequality is required. Otherwise, the switched function contains exactly one positive quadratic coefficient, and \eqref{eq:cycle-env} can be applied. The implementation details are provided in Algorithm~\ref{alg:separation_cycle}.

\begin{algorithm}[htbp]
\caption{Separation for a cycle bilinear function}
\label{alg:separation_cycle}
\begin{algorithmic}[1]
\Require Cycle function \(f_C(x)=\sum_{i=1}^{n-1}a_ix_ix_{i+1}+a_nx_1x_n\), where \(a_i\neq 0\) for \(i\in[n]\); point \(\bar{x}\).
\Ensure An affine underestimator defined by \((\alpha,\beta)\), or \(\emptyset\).

\State Set \(S\gets\emptyset\)

\For{\(i\in[n-1]\)}
    \If{\((a_i<0\text{ and }i\in S)\) or \((a_i>0\text{ and }i\notin S)\)}
        \State \(S\gets S\cup\{i+1\}\)
    \EndIf
\EndFor

\State Define \(T_i(y)=1-y\) if \(i\in S\), and \(T_i(y)=y\) otherwise
\State Set \(\bar{y}_i\gets T_i(\bar{x}_i)\) for all \(i\in[n]\)
\State Expand \(g(y)\gets f_C(T(y))=\sum_{i=1}^{n-1}\widetilde{a}_iy_iy_{i+1}+\widetilde{a}_ny_1y_n+b^\intercal y+c\)

\If{\(\widetilde{a}_n<0\)}
    \State \Return \(\emptyset\)
\EndIf

\State Set \(\widehat{a}_i\gets\widetilde{a}_i/\widetilde{a}_n\) for all \(i\in[n-1]\)
\State Define \(h(y) = \sum_{i=1}^{n-1}\widehat{a}_iy_iy_{i+1}+y_1y_n\)
\State Obtain an active affine piece \(\widehat{\alpha}^{\intercal}y+\widehat{\beta}\) of \(\conv(h)\) at \(\bar{y}\) using \eqref{eq:cycle-env}
\State Set \(\alpha'\gets\widetilde{a}_n\widehat{\alpha}+b\) and \(\beta'\gets\widetilde{a}_n\widehat{\beta}+c\)

\For{\(i\in[n]\)}
    \State Set \(\alpha_i\gets-\alpha'_i\) if \(i\in S\), and \(\alpha_i\gets\alpha'_i\) otherwise
\EndFor

\State Set \(\beta\gets\beta'+\sum_{i\in S}\alpha'_i\)
\State \Return \((\alpha,\beta)\)

\end{algorithmic}
\end{algorithm}

\end{appendices}

\bibliographystyle{spmpsci}      
\bibliography{reference} 

\end{document}